\newcommand{\ie}{i.\,e.~}
\newcommand{\eg}{e.\,g.~}
\newcommand{\Span}[1]{\mathrm{span}\{#1\}}
\newcommand{\Rank}[1]{\mathrm{rank}(#1)}
\newcommand{\Dim}[1]{\mathrm{dim}(#1)}
\newcommand{\C}[1]{\mathcal{C}(#1)}
\newcommand{\D}{\mathrm{d}}
\newcommand{\Lie}{\mathrm{L}}
\newcommand{\Ad}{\mathrm{ad}}
\newcommand{\Mod}{\mathrm{~mod~}}
\newcommand{\XU}{\mathcal{X}\times\mathcal{U}}
\newcommand{\TXU}{\mathcal{T}(\mathcal{X}\times\mathcal{U})}
\theoremstyle{plain}
\newtheorem{theorem}{Theorem}[section]
\newtheorem{lemma}[theorem]{Lemma}
\newtheorem{corollary}[theorem]{Corollary}
\theoremstyle{definition}
\newtheorem{definition}[theorem]{Definition}
\theoremstyle{remark}
\newtheorem{remark}{Remark}
\begin{document}

	\articletype{ARTICLE TEMPLATE}
	
	\title{Necessary and Sufficient Conditions for the Linearizability of Two-Input Systems by a Two-Dimensional Endogenous Dynamic Feedback}
	
	\author{
		\name{Conrad Gst\"ottner\textsuperscript{a}\thanks{CONTACT Conrad Gst\"ottner. Email: conrad.gstoettner@jku.at\newline The first author has been supported by the Austrian Science Fund (FWF) under grant number P 32151.}, Bernd Kolar\textsuperscript{b} and Markus Sch\"oberl\textsuperscript{a}}
		\affil{\textsuperscript{a}Institute of Automatic Control and Control Systems Technology, Johannes Kepler University, Linz, Austria;}
		\affil{\textsuperscript{b}Magna Powertrain Engineering Center Steyr GmbH \& Co KG, Steyrer Str. 32, 4300 St. Valentin, Austria;}
	}
	
	\maketitle
	
	\begin{abstract}
		We propose easily verifiable necessary and sufficient conditions for the linearizability of two-input systems by an endogenous dynamic feedback with a dimension of at most two.
	\end{abstract}
	
	\begin{keywords}
		Flatness, Nonlinear control systems
	\end{keywords}

	\section{Introduction}
		The concept of flatness has been introduced in control theory by Fliess, L\'evine, Martin and Rouchon, see \eg \cite{FliessLevineMartinRouchon:1992,FliessLevineMartinRouchon:1995}. For flat systems, many feed-forward and feedback problems can be solved systematically and elegantly, see \eg \cite{FliessLevineMartinRouchon:1995}. Roughly speaking, a nonlinear control system of the form
		\begin{align*}
			\begin{aligned}
				\dot{x}&=f(x,u)
			\end{aligned}
		\end{align*}
		with $\Dim{x}=n$ states and $\Dim{u}=m$ inputs is flat, if there exist $m$ differentially independent functions $y^j=\varphi^j(x,u,u_1,\ldots,u_q)$, where $u_k$ denotes the $k$-th time derivative of $u$, such that $x$ and $u$ can locally be parameterized by $y$ and its time derivatives. For this flat parameterization, we write
		\begin{align*}
			\begin{aligned}
				x&=F_x(y,y_1,\ldots,y_{r-1})\,,&&&u&=F_u(y,y_1,\ldots,y_r)
			\end{aligned}
		\end{align*}
		and refer to it as the parameterizing map with respect to the flat output $y$. If the parameterizing map is invertible, \ie $y$ and all its time derivatives which explicitly occur in the parameterizing map can be expressed solely as functions of $x$ and $u$, the system is exactly linearizable by static feedback. In this case we call $y$ a linearizing output of the static feedback linearizable system. The static feedback linearization problem has been solved completely, see \cite{JakubczykRespondek:1980,NijmeijervanderSchaft:1990}. However, for flatness there do not exist easily verifiable necessary and sufficient conditions, except for certain classes of systems, including two-input driftless systems, see \cite{MartinRouchon:1994} and systems which are linearizable by a one-fold prolongation of a suitably chosen control, see \cite{NicolauRespondek:2017}. Necessary and sufficient conditions for $(x,u)$-flatness of control affine systems with two inputs and four states can be found in \cite{Pomet:1997}.\\
		
		It is well known that every flat system can be rendered static feedback linearizable by an endogenous dynamic feedback, and conversely, every system linearizable by an endogenous dynamic feedback is flat. If a flat output is known, such a linearizing feedback can be constructed systematically, see \eg \cite{FliessLevineMartinRouchon:1999}. In this contribution we propose easily verifiable necessary and sufficient conditions for the linearizability of two-input systems by an endogenous dynamic feedback with a dimension of at most two. In \cite{GstottnerKolarSchoberl:2021}, a sequential test for checking whether a two-input system is linearizable by an endogenous dynamic feedback with a dimension of at most two has been proposed recently. The main idea of the sequential test in \cite{GstottnerKolarSchoberl:2021} is to successively split off or add endogenous dynamic feedbacks to the system in such a way that eventually a static feedback linearizable system is obtained and it is shown that the proposed algorithm succeeds if and only if the original system is indeed linearizable by an at most two-dimensional endogenous dynamic feedback. However, a major drawback of this sequential test is that it requires straightening out involutive distributions, which from a computational point of view is unfavorable. The necessary and sufficient conditions which we propose in the present contribution overcome this computational drawback. Instead of a sequence of systems a certain sequence of distributions is constructed, based on which it can be decided whether the two-input system is linearizable by an endogenous dynamic feedback with a dimension of at most two or not. Constructing these distributions and verifying the proposed conditions requires differentiation and algebraic operations only.\\
		
		It turns out that systems which are linearizable by an endogenous dynamic feedback with a dimension of at most two are actually linearizable by a special kind of endogenous dynamic feedback, namely prolongations of a suitably chosen input after a suitable static feedback transformation has been applied to the system. A complete solution for the flatness problem for the class of two-input systems which are linearizable by a one-fold prolongation of a suitably chosen control is provided in \cite{NicolauRespondek:2016}. Two-input systems which are linearizable by a two-fold prolongation of a suitable chosen control are considered in \cite{NicolauRespondek:2016-2}. However, no complete solution for the flatness problem of this class of systems is provided in \cite{NicolauRespondek:2016-2}, due to Assumption 2 therein. In Section \ref{se:examples}, we apply our results to some examples, to none of which the results in \cite{NicolauRespondek:2016-2} are applicable. Normal forms for systems which are linearizable by a one-fold prolongation can be found in \cite{NicolauRespondek:2019}, and normal forms for control affine two-input systems linearizable by a two-fold prolongation have recently been proposed in \cite{NicolauRespondek:2020}. The present contribution is greatly influenced by all these results. The novelty of our contribution are easily verifiable necessary and sufficient conditions for linearizability by a two-fold prolongation, covering also the cases to which the results in \cite{NicolauRespondek:2016-2} do not apply. These necessary and sufficient conditions are also a major improvement over the in principal verifiable but computationally inefficient necessary and sufficient conditions in \cite{GstottnerKolarSchoberl:2021}.\\
		
		The paper is organized as follows. In Section \ref{se:notation} we introduce the notation used throughout the paper. In Section \ref{se:preliminaries}, some preliminaries regarding flatness of two-input systems are presented, and in Section \ref{se:seuqnetialTest} the sequential test from \cite{GstottnerKolarSchoberl:2021} is recapitulated briefly. The main results of this contribution are presented in Section \ref{se:distributionTest}, and in Section \ref{se:examples} they are applied to practical and academic examples.
	\section{Notation}\label{se:notation}
		Let $\mathcal{X}$ be an $n$-dimensional smooth manifold, equipped with local coordinates $x^i$, $i=1,\ldots,n$. Its tangent bundle is denoted by $(\mathcal{T}(\mathcal{X}),\tau_\mathcal{X},\mathcal{X})$, for which we have the induced local coordinates $(x^i,\dot{x}^i)$ with respect to the basis $\{\partial_{x^i}\}$. We make use of the Einstein summation convention. By $\partial_xh$ we denote the $m\times n$ Jacobian matrix of $h=(h^1,\ldots,h^m)$ with respect to $x=(x^1,\ldots,x^n)$. The $k$-fold Lie derivative of a function $\varphi$ along a vector field $v$ is denoted by $\Lie_v^k\varphi$. Let $v$ and $w$ be two vector fields. Their Lie bracket is denoted by $[v,w]$, for the repeated application of the Lie bracket, we use the common notation $\Ad_v^kw=[v,\Ad_v^{k-1}w]$, $k\geq 1$ and $\Ad_v^0w=w$. Let furthermore $D_1$ and $D_2$ be two distributions. By $[v,D_1]$ we denote the distribution spanned by the Lie bracket of $v$ with all basis vector fields of $D_1$, and by $[D_1,D_2]$ the distribution spanned by the Lie brackets of all possible pairs of basis vector fields of $D_1$ and $D_2$. The first derived flag of a distribution $D$ is denoted by $D^{(1)}$ and defined by $D^{(1)}=D+[D,D]$. By $\C{D}$, we denote the Cauchy characteristic distribution of $D$. It is spanned by all vector fields $c\in D$ which satisfy $[c,D]\subset D$. The symbols $\subset$ and $\supset$ are used in the sense that they also include equality. An integer beneath the symbol $\subset$ denotes the difference of the dimensions of the distributions involved, \eg $D_1\underset{k}{\subset}D_2$ means that $D_1\subset D_2$ and $\Dim{D_2}=\Dim{D_1}+k$. We make use of multi-indices, in particular, by $R=(r_1,r_2)$ we denote the unique multi-index associated to a flat output of a system with two inputs, where $r_j$ denotes the order of the highest derivative of $y^j$ needed to parameterize $x$ and $u$ by this flat output, \ie $y_{[R]}=(y^1,y^1_1,\ldots,y^1_{r_1},y^2,y^2_1,\ldots,y^2_{r_2})$. Furthermore, we define $R\pm c=(r_1\pm c,r_2\pm c)$ with an integer $c$, and $\#R=r_1+r_2$.
	
	\section{Preliminaries}\label{se:preliminaries}
		In this section, we summarize some results regarding flatness of two-input systems. Throughout, all functions and vector fields are assumed to be be smooth and all distributions are assumed to have locally constant dimension, we consider generic points only. Consider a nonlinear two-input system of the form
		\begin{align}\label{eq:sys}
			\begin{aligned}
				\dot{x}^i&=f^i(x,u)\,,&i=1,\ldots,n
			\end{aligned}
		\end{align}
		with $\Dim{x}=n$, $\Dim{u}=2$ and $\Rank{\partial_uf}=2$.
		\begin{definition}
			The two-input system \eqref{eq:sys} is called flat if there exist two differentially independent functions $y^j=\varphi^j(x,u,u_1,\ldots,u_q)$, $j=1,2$ such that locally
			\begin{align}\label{eq:param_r}
				\begin{aligned}
					x&=F_x(y,y_1,\ldots,y_{r-1})\\
					u&=F_u(y,y_1,\ldots,y_r)\,.
				\end{aligned}
			\end{align}
			The functions $y^j=\varphi^j(x,u,u_1,\ldots,u_q)$, $j=1,2$ are called the components of the flat output $y=\varphi(x,u,u_1,\ldots,u_q)$.
		\end{definition}
		Let $y=\varphi(x,u,u_1,\ldots,u_q)$ be a flat output of \eqref{eq:sys}. We define the multi-index $R=(r_1,r_2)$ where $r_j$ is the order of the highest derivative of the component $y^j$ of the flat output which explicitly occurs in \eqref{eq:param_r}. This multi-index can be shown to be unique and with this multi-index, the flat parameterization can be written in the form
		\begin{align}\label{eq:param}
			\begin{aligned}
				x&=F_{x}(y_{[R-1]})\\
				u&=F_{u}(y_{[R]})\,.
			\end{aligned}
		\end{align}
		The flat parameterization is a submersion (it degenerates to a diffeomorphism if and only if $y$ is a linearizing output). The difference of the dimensions of the domain and the codomain of \eqref{eq:param} is denoted by $d$, \ie $d=\#R+2-(n+2)=\#R-n$. In \cite{NicolauRespondek:2016} and \cite{NicolauRespondek:2016-2}, the number $\#R+2$ is called the differential weight of the flat output. (The differential weight of a flat output with difference $d$ is thus given by $n+2+d$.) The difference $d$ is the minimal dimension of an endogenous dynamic feedback needed to render \eqref{eq:sys} static feedback linearizable such that $y$ forms a linearizing output of the closed loop system. Such a linearizing endogenous feedback can be constructed systematically, see \eg \cite{FliessLevineMartinRouchon:1999}. If we have $d=0$, the map \eqref{eq:param} degenerates to a diffeomorphism and the system is static feedback linearizable with $y$ being a linearizing output. A flat output $y$ is called a minimal flat output if its difference is minimal compared to all other possible flat outputs of the system. We define the difference $d$ of a flat system to be the difference of a minimal flat output of the system. The difference $d$ of a system therefore measures its distance from static feedback linearizability, \ie $d$ is the minimal possible dimension of an endogenous dynamic feedback needed to render the system static feedback linearizable.
		
		For \eqref{eq:sys}, we define the distributions $D_0=\Span{\partial_{u^1},\partial_{u^2}}$ and $D_i=D_{i-1}+[f,D_{i-1}]$, $i\geq 1$ on the state and input manifold $\mathcal{X}\times\mathcal{U}$, where $f=f^i(x,u)\partial_{x^i}$. 
		\begin{theorem}\label{thm:sfl}
			The two-input system \eqref{eq:sys} is linearizable by static feedback if and only if all the distributions $D_i$ are involutive and $\Dim{D_{n-1}}=n+2$.
		\end{theorem}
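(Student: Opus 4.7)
The plan is to follow the classical Jakubczyk--Respondek style of argument, exploiting the fact that each distribution $D_i$ is invariant under regular static feedback $u=\alpha(x)+\beta(x)v$ and under diffeomorphisms of $\mathcal{X}$: the construction $D_i=D_{i-1}+[f,D_{i-1}]$ with $D_0=\Span{\partial_{u^1},\partial_{u^2}}$ is intrinsic, so the involutivity and dimension conditions descend to (or lift from) any equivalent system.

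For necessity, I would observe that if \eqref{eq:sys} is static feedback linearizable, then after a state diffeomorphism and a regular static feedback it takes Brunovsk\'y canonical form with two integrator chains of lengths $k_1,k_2$ satisfying $k_1+k_2=n$. For this normal form the distributions $D_i$ can be written down explicitly as spans of coordinate vector fields $\partial_{z^{k_j-\ell}}$ and $\partial_{v^j}$ for suitable indices; they are manifestly involutive, and $\Dim{D_{n-1}}=n+2$ since the chains saturate by step $\max(k_1,k_2)-1\leq n-1$. Invariance under feedback and diffeomorphism then transfers these properties back to the original system.

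For sufficiency I would construct a linearizing output using the Frobenius theorem applied to the annihilator codistributions $D_i^\perp$. Let $\rho_i=\Dim{D_i}-\Dim{D_{i-1}}$; the controllability indices $k_1\geq k_2$ are read from the sequence of jumps, and $\rho_i\in\{0,1,2\}$ with $\sum\rho_i=n$. Involutivity of the largest $D_{n-1}$ yields, via Frobenius, two independent functions $y^1(x),y^2(x)$ whose differentials span $D_{n-1}^\perp$; in particular they do not depend on $u$. One then shows by induction on $i$, using the involutivity of each $D_i$ together with the identity $\Lie_f\D\varphi\in D_i^\perp$ whenever $\D\varphi\in D_{i+1}^\perp$ and $\D\varphi$ is $u$-independent, that the Lie derivatives $\Lie_f^\ell y^j$ remain functions of $x$ only and their differentials collectively span $T^\ast\mathcal{X}$. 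Hence
\begin{equation*}
  \{\Lie_f^\ell y^j : j=1,2,\ 0\leq\ell\leq k_j-1\}
\end{equation*}
forms a local coordinate system on $\mathcal{X}$, and imposing $\Lie_f^{k_j}y^j=v^j$ defines a regular static feedback (regularity follows from $\Dim{D_{n-1}}=n+2$) that puts the closed loop into Brunovsk\'y form, with $y$ as a linearizing output.

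The main obstacle will be the sufficiency direction, specifically the coordinated choice of the two output components: one has to ensure not merely that $y^1,y^2$ annihilate $D_{n-1}$, but that when their Lie derivatives are taken along $f$ the resulting tower of differentials remains independent and $u$-free down to level $0$. This is where involutivity of \emph{every} $D_i$, and not just the top one, is used, and where the two-input structure requires care, since one must allocate the two chains of lengths $k_1,k_2$ to the two components $y^1,y^2$ in a way compatible with the flag of dimension jumps $\rho_i$.
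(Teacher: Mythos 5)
First, note that the paper itself does not prove Theorem \ref{thm:sfl}: it is the classical static feedback linearization criterion and the text simply refers to \cite{NijmeijervanderSchaft:1990} (see also \cite{JakubczykRespondek:1980}). Your sketch is exactly that classical Jakubczyk--Respondek argument, so you are on the intended route, and the necessity half is sound: the $D_i$ are intrinsic under state diffeomorphisms and regular input transformations, and in Brunovsk\'y form they are spanned by coordinate vector fields, hence involutive and of dimension $n+2$ by step $n-1$ (a chain of length $k_j$ saturates at step $k_j$, not $k_j-1$, but since $k_1+k_2=n$ and both indices are at least $1$ by $\Rank{\partial_uf}=2$, the bound $\max(k_1,k_2)\leq n-1$ still gives the claim).

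The genuine gap is in the sufficiency direction, at precisely the point you flag but do not resolve. Writing $s$ for the smallest integer with $D_s=\TXU$, your construction of \emph{two} independent $u$-free functions spanning the annihilator of the top proper distribution only works when $D_{s-1}$ has codimension $2$, i.e. when the controllability indices are equal. Whenever $k_1\neq k_2$ one has $\Dim{D_{s-1}}=n+1$, so Frobenius yields only a single function $\varphi^1$ with $\Span{\D\varphi^1}=D_{s-1}^\perp$, and the second output component must be extracted from a lower level of the flag: one needs $\varphi^2$ such that $\Span{\D\varphi^1,\D\Lie_f\varphi^1,\ldots,\D\Lie_f^{s-l}\varphi^1,\D\varphi^2}=D_{l-1}^\perp$, where $l$ is the index from which the flag grows in steps of one --- this is exactly the case distinction the paper spells out immediately below the theorem statement. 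Completing the argument requires showing that $\D\varphi\in D_i^\perp$ implies $\D\Lie_f\varphi\in D_{i-1}^\perp$ (which follows from $\Lie_v\Lie_f\varphi=\Lie_f\Lie_v\varphi+\Lie_{[v,f]}\varphi$ and $[v,f]\in D_i$ for $v\in D_{i-1}$), so that the tower $\D\Lie_f^j\varphi^1$ stays independent and can be completed to a basis of $D_{l-1}^\perp$ by one further function; involutivity of the intermediate $D_i$ is what guarantees this completion is by a function of $x$ alone. Also note a small inconsistency in your wording: under the hypothesis $\Dim{D_{n-1}}=n+2$ the codistribution $D_{n-1}^\perp$ is zero, so the Frobenius step must be applied to $D_{s-1}$, not $D_{n-1}$.
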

		For a proof of this theorem, we refer to \cite{NijmeijervanderSchaft:1990}. For a system which meets the conditions of Theorem \ref{thm:sfl}, the linearizing outputs can be computed as follows. Let $s$ be the smallest integer such that $D_s=\TXU$. In case of $\Dim{D_{s-1}}=n$ (\ie $D_{s-1}$ is of codimension $2$), the sequence of involutive distributions is of the form
		\begin{align*}
			D_0\underset{2}{\subset}D_1\underset{2}{\subset}\ldots\underset{2}{\subset}D_{s-1}\underset{2}{\subset}\TXU
		\end{align*}
		and linearizing outputs are all pairs of functions $(\varphi^1,\varphi^2)$ which satisfy $\Span{\D\varphi^1,\D\varphi^2}=D_{s-1}^\perp$. However, if $\Dim{D_{s-1}}=n+1$ (\ie $D_{s-1}$ is of codimension $1$), the sequence is of the form 
		\begin{align*}
			D_0\underset{2}{\subset}D_1\underset{2}{\subset}\ldots\underset{2}{\subset}D_{l-1}\underset{2}{\subset}D_l\underset{1}{\subset}D_{l+1}\underset{1}{\subset}\ldots\underset{1}{\subset}D_{s-1}\underset{1}{\subset}\TXU\,,
		\end{align*}
		\ie there exists an integer $l$ from which on the sequence grows in steps of one. Linearizing outputs are then all pairs of functions $(\varphi^1,\varphi^2)$ which satisfy $\Span{\D\varphi^1}=D_{s-1}^\perp$ and $\Span{\D\varphi^1,\D\Lie_f\varphi^1,\ldots,\D\Lie_f^{s-l}\varphi^1,\D\varphi^2}=D_{l-1}^\perp$.\\	
		
		The sequential test for flatness with $d\leq 2$ proposed in \cite{GstottnerKolarSchoberl:2021}, as well as the distribution test for flatness with $d\leq 2$ which we propose in this contribution, both rely on the following crucial result regarding flat two-input systems with $d\leq 2$.
		\begin{theorem}\label{thm:linearizationByProlongations}
			A system \eqref{eq:sys} with $d\leq 2$ can be rendered static feedback linearizable by $d$-fold prolonging a suitably chosen (new) input after a suitable input transformation $\bar{u}=\Phi_u(x,u)$ has been applied.
		\end{theorem}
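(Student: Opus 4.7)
The plan is to construct the required input transformation $\Phi_u$ and the choice of prolonged input directly from a minimal flat output $y=(y^1,y^2)$ with multi-index $R=(r_1,r_2)$, and to verify that $y$ becomes a linearizing output of the resulting prolonged system, so that Theorem~\ref{thm:sfl} applies. The case $d=0$ is trivial. For $d\in\{1,2\}$, after reordering components I assume $r_1\ge r_2$; dimension counting gives $\#R=n+d$, so the prolonged state-input space $(x,\bar u^1,\bar u^1_1,\ldots,\bar u^1_{d-1},\bar u^1_d,\bar u^2)$ has dimension $n+d+2=\#R+2$, matching the domain of the flat parameterization, as required for $y$ to serve as a linearizing output of the prolonged system.

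The central step is to pick $\bar u^1=\Phi^1_u(x,u)$ so that, composed with the flat parameterization, the resulting function depends only on $y_{[R-1]}$ and not on the top derivatives $y^1_{r_1},y^2_{r_2}$. This forces the row $\partial_u\Phi^1_u$ to lie in the left kernel of the $2\times 2$ matrix $M=[\,\partial_{y^1_{r_1}}F_u\;\;\partial_{y^2_{r_2}}F_u\,]$. For $d=1$ (essentially the situation of \cite{NicolauRespondek:2016}), the minimality of $R$ forces $M$ to have a non-trivial left kernel, so a nonzero $\partial_u\Phi^1_u$ exists. Any completion $\bar u^2$ making $\Phi_u$ a local diffeomorphism then yields, after a single prolongation of $\bar u^1$, a flat parameterization of the extended state-input space by $y_{[R]}$ that is a local diffeomorphism; Theorem~\ref{thm:sfl} concludes this case.

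For $d=2$ I would iterate: first apply the $d=1$-style construction once to reduce the difference from $2$ to $1$, then apply the $d=1$ argument to the once-prolonged system. The main obstacle is ensuring that the two steps collapse into a single static input transformation followed by a genuine two-fold prolongation of the same input. Concretely, the second input transformation acts on the inputs $(\bar u^1_1,\bar u^2)$ of the once-prolonged system; for the theorem one needs this transformation to preserve the direction of $\bar u^1_1$, so that a further prolongation truly corresponds to differentiating $\bar u^1$ a second time rather than prolonging a hybrid combination. Verifying this requires a careful analysis of how the left-kernel condition on $M$ behaves under the first prolongation, and again leans on the minimality of the flat output.
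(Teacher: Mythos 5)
Your proposal takes a genuinely different route from the paper, but it has a real gap exactly where the theorem has its content. The paper does not construct the prolongation from the parameterizing map at all: it invokes a prior result (Lemma~\ref{lem:linearizationXUflat}, i.e.\ Corollary~4 of \cite{GstottnerKolarSchoberl:2020a}) stating that the conclusion already holds for any \emph{$(x,u)$-flat} output of difference $d$, and then reduces Theorem~\ref{thm:linearizationByProlongations} to showing that a \emph{minimal} flat output with $d\leq 2$ cannot depend on derivatives of the inputs. That reduction is carried out via the relation $d=n-k_1-k_2$ between the difference and the (generalized, possibly negative) relative degrees of the components, followed by a contradiction argument: if one component depended on $u_{q_2}$ with $q_2\geq 1$, the other would have relative degree $k_1\geq n+q_2-2$, and putting the system into the corresponding normal form \eqref{eq:statesReplaced} leaves at most one remaining state, which makes the system static feedback linearizable (or violates $\Rank{\partial_u f}=2$), contradicting minimality. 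None of this appears in your proposal, but your construction, if completed, would arguably subsume it, since invertibility of the prolonged parameterizing map automatically expresses $y$ in the prolonged state and input.

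The decisive problem is the case $d=2$, which is the case the theorem is actually about. Your construction is essentially the one underlying the sequential test of Section~\ref{se:seuqnetialTest}; for $d=1$ it is plausible, although the claim that minimality of $R$ forces the $2\times 2$ matrix $M$ to be singular is asserted rather than proved --- a pure dimension count does not exclude $\det M\neq 0$, and establishing the existence of the left-kernel direction is precisely the nontrivial step taken in \cite{GstottnerKolarSchoberl:2021} and \cite{NicolauRespondek:2016}. For $d=2$ you correctly identify the obstacle: the second reduction step yields an input transformation of the once-prolonged system acting on $(\bar u^1_1,\bar u^2)$, and one must show that it can be chosen so as to preserve the direction of $\bar u^1_1$ --- equivalently, that $\bar F^1_u$ can be arranged to depend on $y_{[R-2]}$ only, not merely on $y_{[R-1]}$ --- so that the two steps collapse into a single static transformation $\bar u=\Phi_u(x,u)$ followed by a genuine two-fold prolongation of one input. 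You state that verifying this ``requires a careful analysis'' and then do not carry it out. Since that verification is the entire substance of the statement for $d=2$, the proposal as written does not prove the theorem.
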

		A proof of this result is provided in Appendix \ref{ap:supplements}. 
	\section{Sequential Test}\label{se:seuqnetialTest}
		In this section, we briefly recapitulate the main idea of the necessary and sufficient condition for flatness with $d\leq 2$ in form of the sequential test proposed in \cite{GstottnerKolarSchoberl:2021}. For details, proofs and examples, we refer to \cite{GstottnerKolarSchoberl:2021}. Let $y$ be a minimal flat output with difference $0<d\leq 2$ of the system \eqref{eq:sys}. It can be shown that the assumption $0<d\leq 2$ implies the existence of an input transformation $\bar{u}=\Phi_u(x,u)$ such that the flat parameterization of the new inputs by the flat output $y$ is of the form $\bar{u}^1=\bar{F}_u^1(y_{[R-1]})$, $\bar{u}^2=\bar{F}_u^2(y_{[R]})$ (where $\bar{F}_u=\Phi_u\circ (F_x,F_u)$). Consider the system obtained by one-fold prolonging $\bar{u}^1$, \ie
		\begin{align*}
			\begin{aligned}
				\dot{x}&=f(x,\hat{\Phi}_u(x,\bar{u}))=\bar{f}(x,\bar{u})&&&\dot{\bar{u}}^1&=\bar{u}^1_1\,,
			\end{aligned}
		\end{align*}
		with the state $(x,\bar{u}^1)$ and the input $(\bar{u}^1_1,\bar{u}^2)$. The flat output $y$ of the original system is also a flat output of the prolonged system (and conversely, it can be shown that every flat output of the prolonged system is also a flat output of the original system). Since $\bar{u}^1=\bar{F}^1_{u}(y_{[R-1]})$, we have $\bar{u}^1_1=\bar{F}^1_{u_1}(y_{[R]})$ and thus, the domain of the parameterizing map of the prolonged system with respect to the flat output $y$ is still of dimension $\#R+2$, but its codomain grew by one, \ie $y$ as a minimal flat output of the prolonged system only has a difference of $d-1$. The main idea of the sequential test in \cite{GstottnerKolarSchoberl:2021} is to find such an input (they can indeed be found systematically), prolong it in order to obtain a system whose difference is $d-1$ (where $d\leq 2$ is the difference of the original system), and since by assumption $d\leq 2$, after at most two such steps, the procedure must yield a static feedback linearizable system, otherwise, the original system must have had a difference of $d\geq 3$.\\
		
		When applying the sequential test to a system \eqref{eq:sys}, in every step, a new system is derived by either splitting off a two-dimensional endogenous dynamic feedback or by adding a one-dimensional endogenous dynamic feedback (in form of a one-fold prolongation of a certain input). How the next system is derived from the current one is decided based on the distributions $D_0=\Span{\partial_{\bar{u}^1},\partial_{\bar{u}^2}}$ and $D_1=D_0+[f,D_0]$ of the current system. If $D_1$ is involutive, it can be straightened out by a suitable state transformation $\bar{x}=\Phi_x(x)$ in order to obtain a decomposition of the system into the form
		\begin{align}\label{eq:decompCase1}
			\begin{aligned}
				\Sigma_2:&&\dot{\bar{x}}_2^{i_2}&=\bar{f}_2^{i_2}(\bar{x}_2,\bar{x}_1)\,,&i_2&=1,\ldots,n-2\\
				\Sigma_1:&&\dot{\bar{x}}_1^{i_1}&=\bar{f}_1^{i_1}(\bar{x}_2,\bar{x}_1,u)\,,&i_1&=1,2\,.
			\end{aligned}
		\end{align}
		The procedure is then continued with the subsystem $\Sigma_2$ with the state $\bar{x}_2$ and the input $\bar{x}_1$, \ie we split off a two-dimensional endogenous dynamic feedback. It follows that $\Sigma_2$ has the same flat outputs with the same differences as the original system.\\
		
		If $D_1$ is non-involutive but $D_0\subset\C{D_1}$, it can be shown that the system allows an affine input representation (AI representation)
		\begin{align*}
			\begin{aligned}
				\dot{x}&=a(x)+b_1(x)u^1+b_2(x)u^2
			\end{aligned}
		\end{align*}
		with a non-involutive input distribution $\Span{b_1,b_2}$. Based on such an AI representation, an input transformation $\bar{u}^j=m_l^j(x)u^l$, $j,l=1,2$ can be derived such that if the system indeed has a difference of $d\leq 2$, the system obtained by one-fold prolonging the new input $\bar{u}^1$ has a difference of $d-1$, \ie in such a step a one-dimensional endogenous dynamic feedback is added to the system, and under the assumption $d\leq 2$, it can be show that the feedback modified system has a difference of $d-1$ only.\\

		Finally, if $D_1$ is non-involutive and $D_0\not\subset\C{D_1}$, the system allows at most a so called partial affine input representation (PAI representation)
		\begin{align*}
			\begin{aligned}
				\dot{x}&=a(x,\bar{u}^1)+b(x,\bar{u}^1)\bar{u}^2\,.
			\end{aligned}
		\end{align*}
		This form was introduced in \cite{SchlacherSchoberl:2013}. In \cite{KolarSchoberlSchlacher:2016}, it has been shown that the existence of a PAI representation is a necessary condition for flatness. For two input systems, an input transformation such that the system takes PAI form can be derived systematically (provided the system indeed allows a PAI representation, otherwise, we can conclude that the system is not flat). It can be shown that a system which does not allow an AI representation allows at most two fundamentally different PAI representations and that in case of $d\leq 2$, the non-affine occurring inputs $\bar{u}^1$ of these possibly existing two PAI representations are the candidates for inputs whose flat parameterization with respect to a minimal flat output involves derivatives up to order $R-1$ only. So in this case, the procedure is continued with the system obtained by one-fold prolonging the non-affine occurring input $\bar{u}^1$ (if the system indeed allows two fundamentally different PAI representations, we have to continue the procedure with both of them, \ie there may occur a branching point).\\
		
		As already mentioned, the sequential test has the drawback that it requires straightening out involutive distributions in order to achieve decompositions of the form \eqref{eq:decompCase1}. In fact, also the explicit computation of an input transformation such that a system takes PAI form requires straightening out an involutive distribution.
		
		
	\section{Main Results}\label{se:distributionTest}
		In this section we present our main results, which are easily verifiable necessary and sufficient conditions for flatness with a difference of $d\leq 2$, in the form of Theorem \ref{thm:d2} for the case $d=2$ and Theorem \ref{thm:d1} for the case $d=1$ below. These necessary and sufficient conditions overcome the computational drawbacks of the sequential test described in the previous section, instead of a sequence of systems, a certain sequence of distributions is constructed. The distributions constructed when applying these theorems are actually closely related with the distributions $D_0$ and $D_1$ of the individual systems constructed in the sequential test, based on which in the sequential test it is decided how the next system is computed from the current one. There is actually a one-to-one correspondence between the sequential test and the conditions of Theorem \ref{thm:d2} and \ref{thm:d1}. One could prove these theorems via this one-to-one correspondence, however, in this contribution, we provide self-contained proofs which do not rely on the sequential test. A detailed proof of Theorem \ref{thm:d2} is provided in Section \ref{se:proofd2}, for Theorem \ref{thm:d1}, a brief sketch of a proof is provided in Section \ref{se:proofd1}. As already mentioned, the case $d=1$ has been solved completely in \cite{NicolauRespondek:2016}, below we explain how our necessary and sufficient conditions in the form of Theorem \ref{thm:d1} are related with those provided in \cite{NicolauRespondek:2016}. The computation of flat outputs with $d\leq 2$ of systems which meet our conditions for flatness with $d\leq 2$ is addressed in Section \ref{se:computationOfFlatOutputs}.\\
		
		Assume that the system \eqref{eq:sys} is not static feedback linearizable. We then have the involutive distribution $D_0=\Span{\partial_{u^1},\partial_{u^2}}$ and can calculate the distributions $D_i=D_{i-1}+[f,D_{i-1}]$, $i=1,\ldots,k_1$ where $k_1$ is defined to be the smallest integer such that $D_{k_1}$ is non-involutive (its existence is assured by the assumption that the system is not static feedback linearizable).
		\begin{theorem}\label{thm:d2}
			The system \eqref{eq:sys} is flat with a difference of $d=2$ if and only if:
			\begin{enumerate}[label=\textbf{\theenumi.}]
				\item The distributions $D_i$, $i=1,\ldots,k_1$ have the dimensions $\Dim{D_i}=2(i+1)$.\label{d2:1}
			\end{enumerate}
			\begin{enumerate}[label=\textbf{\theenumi a.}]
				\setcounter{enumi}{1}
				\item Either $D_{k_1-1}\subset\C{D_{k_1}}$ and then:\label{d2:2a}
				\begin{enumerate}[label=\Alph*.]
					\item Either $\Dim{\overline{D}_{k_1}}=\Dim{D_{k_1}}+1$ and then $\Dim{[f,D_{k_1}]+\overline{D}_{k_1}}=\Dim{\overline{D}_{k_1}}+1$. Define $E_{k_1+1}=\overline{D}_{k_1}$ and continue with item \ref{d2:3b}\label{d2:2aA}
					\item Or $\Dim{\overline{D}_{k_1}}=\Dim{D_{k_1}}+2$ and then $[f,\C{D_{k_1}^{(1)}}]\subset D_{k_1}^{(1)}$. Define $E_{k_1+1}=D_{k_1}^{(1)}$ and continue with item \ref{d2:4a}\ref{d2:4a2} with $k_2=k_1+1$.\label{d2:2aB}
				\end{enumerate}
			\end{enumerate}
			\begin{enumerate}[label=\textbf{\theenumi b.}]
				\setcounter{enumi}{1}
				\item Or $D_{k_1-1}\not\subset\C{D_{k_1}}$ and then there exists a vector field $v_c\in D_{k_1-1}$, $v_c\notin D_{k_1-2}$ (take $D_{k_1-2}=0$ if $k_1=1$) such that $E_{k_1-1}\subset\C{E_{k_1}}$ where $E_{k_1-1}=D_{k_1-2}+\Span{v_c}$ and $E_{k_1}=D_{k_1-1}+\Span{[v_c,f]}$.\footnote{There exist at most two distinct such pairs of distributions, the construction is explained below. If indeed two exist, a branching point occurs and we have to continue with both of them.}\label{d2:2b}
			\end{enumerate}
			\begin{enumerate}[label=\textbf{\theenumi a.}]
				\setcounter{enumi}{2}
				\item Either $E_{k_1}$ is non-involutive (only \ref{d2:2b} can yield a non-involutive distribution $E_{k_1}$) and then:\label{d2:3a}
				\begin{enumerate}[label=\Roman*.]
					\item $\Dim{\overline{E}_{k_1}}=\Dim{E_{k_1}}+1$.\label{d2:3a1}
					\item $\Dim{[f,E_{k_1}]+\overline{E}_{k_1}}=\Dim{\overline{E}_{k_1}}+1$. Define $F_{k_1+1}=\overline{E}_{k_1}$ and continue with item \ref{d2:5}\label{d2:3a2}
				\end{enumerate}
			\end{enumerate}
			\begin{enumerate}[label=\textbf{\theenumi b.}]
				\setcounter{enumi}{2}
				\item Or $E_{k_1}$ is involutive and then:\label{d2:3b}
				\begin{enumerate}[label=\Roman*.]
					\item There exists a minimal integer $k_2$ such that $E_{k_2}$ is non-involutive where $E_i=E_{i-1}+[f,E_{i-1}]$.\label{d2:3b1}
					\item The distributions $E_i$ have the dimensions $\Dim{E_i}=2i+1$ for $i=k_1+1,\ldots,k_2$.\label{d2:3b2}
				\end{enumerate}
			\end{enumerate}
			\begin{enumerate}[label=\textbf{\theenumi a.}]
				\setcounter{enumi}{3}
				\item Either $E_{k_2-1}\subset\C{E_{k_2}}$ and then:\label{d2:4a}
				\begin{enumerate}[label=\Roman*.]
					\item $\Dim{\overline{E}_{k_2}}=\Dim{E_{k_2}}+1$.\label{d2:4a1}
					\item Either $\overline{E}_{k_2}=\TXU$, or $\Dim{[f,E_{k_2}]+\overline{E}_{k_2}}=\Dim{\overline{E}_{k_2}}+1$. Define $F_{k_2+1}=\overline{E}_{k_2}$.\label{d2:4a2}
				\end{enumerate}
			\end{enumerate}
			\begin{enumerate}[label=\textbf{\theenumi b.}]
			\setcounter{enumi}{3}
				\item Or $E_{k_2-1}\not\subset\C{E_{k_2}}$ and then $F_{k_2}=E_{k_2-1}+\C{E_{k_2}}$ is involutive.\label{d2:4b}
			\end{enumerate}
			\begin{enumerate}[label=\textbf{\theenumi.}]
			\setcounter{enumi}{4}
				\item All the distributions $F_i=F_{i-1}+[f,F_{i-1}]$ are involutive and there exists an integer $s$ such that $F_s=\TXU$.\label{d2:5}
			\end{enumerate}
		\end{theorem}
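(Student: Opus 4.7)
My plan is to establish a one-to-one correspondence between the distribution conditions in the theorem and the outcomes of the sequential test recalled in Section \ref{se:seuqnetialTest}, invoking Theorem \ref{thm:linearizationByProlongations} as the pivot: flatness with $d=2$ is equivalent to the existence of an input transformation $\bar{u}=\Phi_u(x,u)$ followed by a $2$-fold prolongation of one of the new inputs that renders the system static feedback linearizable. The distributions $D_i$, $E_i$, $F_i$ will be shown to encode, respectively, the growth of $D_0+[f,\cdot]$ applied to the original system, to the system after one prolongation (and the required input transformation), and to the system after both prolongations, \emph{without} ever explicitly straightening out involutive distributions. The key technical lemma I would prove separately is that input transformations leave the sequence $D_i$ invariant, so everything can be done intrinsically.

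For necessity, I would start from a flat output with $d=2$, use Theorem \ref{thm:linearizationByProlongations} to get the privileged input $\bar{u}^1$, and track what happens in the original coordinates. Condition \ref{d2:1} is immediate because the $D_i$ of a system static feedback linearizable after two prolongations must grow maximally until the first non-involutive step. The dichotomy \ref{d2:2a}/\ref{d2:2b} reflects whether the privileged input is already visible in the AI form ($D_{k_1-1}\subset\C{D_{k_1}}$) or only after a nontrivial input transformation that first brings the system to a PAI form ($D_{k_1-1}\not\subset\C{D_{k_1}}$); the two sub-cases \ref{d2:2aA}/\ref{d2:2aB} distinguish how many non-involutive directions $D_{k_1}$ has, which in turn dictates whether the second prolongation is needed immediately or one more involutive step is available. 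In the PAI case one argues that the rank-one direction $v_c$ selected in \ref{d2:2b} must coincide, modulo $D_{k_1-2}$, with $\partial_{\bar{u}^2}$ of the PAI representation, and that the Cauchy condition $E_{k_1-1}\subset\C{E_{k_1}}$ is precisely the involutivity of $D_0$ of the correctly prolonged system. With this identification, the $E_i$ (respectively $F_i$) coincide with the $D_i$ of the once- (respectively twice-) prolonged system, and the remaining conditions \ref{d2:3a}--\ref{d2:5} are translations of the static feedback linearizability criterion of Theorem \ref{thm:sfl} applied one and two prolongations later.

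For sufficiency, I would reverse each step: conditions \ref{d2:2a}/\ref{d2:2b} identify a candidate input whose prolongation yields a system whose $D_i$ sequence coincides with the $E_i$; conditions \ref{d2:3a}--\ref{d2:4b} either identify a further candidate input for the second prolongation or show that the once-prolonged system is already ready to be finalized; and \ref{d2:5} delivers static feedback linearizability of the twice-prolonged system via Theorem \ref{thm:sfl}. A linearizing output $y$ of the twice-prolonged system is then shown to be a flat output of the original system with $\#R\leq n+2$, and a short argument based on condition \ref{d2:1} and on the assumed non-involutivity of $D_{k_1}$ (which rules out static feedback linearizability of the original system) forces $d=2$ exactly.

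The main obstacle will be in case \ref{d2:2b}: one must prove that the algebraically defined vector field $v_c$ exhausts all possibilities for the non-affine input $\bar{u}^1$ of a PAI representation, and that the footnoted bound of at most two such pairs $(E_{k_1-1},E_{k_1})$ matches the branching in the sequential test. This requires a careful analysis of how PAI representations are parameterized by the Cauchy characteristics of $D_{k_1}$ and its derived flag, together with the observation that the condition $E_{k_1-1}\subset\C{E_{k_1}}$ is in fact a very restrictive algebraic system on $v_c$. A related delicate point is the interaction in \ref{d2:2aB} between $D_{k_1}^{(1)}$ and the prolongation: one must show that $[f,\C{D_{k_1}^{(1)}}]\subset D_{k_1}^{(1)}$ is exactly what guarantees that after prolonging the two-dimensional non-involutive input channel, the resulting system's first distribution equals $D_{k_1}^{(1)}$ with the correct involutivity and dimension to connect into the $F_i$-sequence of item \ref{d2:4a}\ref{d2:4a2}. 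Once these two points are settled, the remaining verifications are routine dimension counts and Jacobi-type identities.
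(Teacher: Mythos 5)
Your plan is viable, but it is a genuinely different route from the one the paper takes: the paper explicitly notes that Theorem \ref{thm:d2} \emph{could} be proven via the one-to-one correspondence with the sequential test, and then deliberately gives a self-contained proof instead. Concretely, for necessity the paper does not step through AI/PAI representations one prolongation at a time; it applies Theorem \ref{thm:linearizationByProlongations} once, forms the static-feedback-linearizability distributions $\Delta_i$ of the \emph{two-fold} prolonged system \eqref{eq:d2twoFoldProlongedDistributions}, simplifies their bases (Lemma \ref{lem:d2SimplificationOfDistributions}), and reads off every item by identifying $D_i$, $E_i$, $F_i$ with explicit subdistributions of the $\Delta_i$ (e.g.\ $v_c=\Ad_{\bar f}^{k_1-1}\partial_{\bar u^2}$ in item \ref{d2:2b}, $F_i=\Delta_i\cap\TXU$ in item \ref{d2:5}); your two delicate points (exhaustiveness of $v_c$, and the role of $\C{D_{k_1}^{(1)}}$ in \ref{d2:2aB}) are handled there by Lemmas \ref{lem:algWithinUniqueness}, \ref{lem:d2case1FirstDerivedFlag} and \ref{lem:cauchy}. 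For sufficiency the paper does not re-prolong and invoke Theorem \ref{thm:sfl}; it straightens out the assembled chain of involutive distributions and derives a structurally flat triangular form, from which the flat output and its difference are read off directly. Your approach buys a conceptually transparent reduction to the already-proven sequential test, at the cost of importing its correctness (so the proof is no longer self-contained) and of re-introducing, inside the proof, the splitting-off steps needed to make sense of ``the $\partial_{\bar u^2}$ of the PAI representation'' when $k_1\geq 2$. One thin spot you should repair: at the end of your sufficiency argument, condition \ref{d2:1} together with the non-involutivity of $D_{k_1}$ only rules out $d=0$; to force $d=2$ exactly you must additionally show that the conditions of Theorem \ref{thm:d1} fail, and since items \ref{d1:1}--\ref{d1:2b} of that theorem can coincide with items \ref{d2:1}--\ref{d2:2b} here, the obstruction necessarily comes from the later items (the second non-involutive distribution $E_{k_2}$), not from the data you cite.
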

	 	In Theorem \ref{thm:d2}, we have several junctions, which is graphically illustrated in Figure \ref{fig:d2}.
		\begin{figure}[h!]
			\centering
			\includegraphics[scale=1.3]{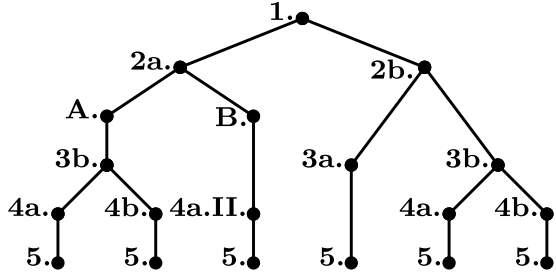}
			\caption{Overview of the possible paths in Theorem \ref{thm:d2}.}
			\label{fig:d2}
		\end{figure}
	
		Regarding flatness with a difference of $d=1$, we have the following result.
		\begin{theorem}\label{thm:d1}
			The system \eqref{eq:sys} is flat with a difference of $d=1$ if and only if:
			\begin{enumerate}[label=\textbf{\theenumi.}]
				\item The distributions $D_i$, $i=1,\ldots,k_1$ have the dimensions $\Dim{D_i}=2(i+1)$.\label{d1:1}
			\end{enumerate}
			\begin{enumerate}[label=\textbf{\theenumi a.}]
				\setcounter{enumi}{1}
				\item Either $D_{k_1-1}\subset\C{D_{k_1}}$ and then:\label{d1:2a}
				\begin{enumerate}[label=\Roman*.]
					\item $\Dim{\overline{D}_{k_1}}=\Dim{D_{k_1}}+1$.\label{d1:2a1}
					\item Either $\overline{D}_{k_1}=\TXU$, or $\Dim{[f,D_{k_1}]+\overline{D}_{k_1}}=\Dim{\overline{D}_{k_1}}+1$. Define $E_{k_1+1}=\overline{D}_{k_1}$.\label{d1:2a2}
				\end{enumerate}
			\end{enumerate}
			\begin{enumerate}[label=\textbf{\theenumi b.}]
				\setcounter{enumi}{1}
				\item Or $D_{k_1-1}\not\subset\C{D_{k_1}}$ and then $E_{k_1}=D_{k_1-1}+\C{D_{k_1}}$ is involutive.\label{d1:2b}
			\end{enumerate}
			\begin{enumerate}[label=\textbf{\theenumi.}]
				\setcounter{enumi}{2}
				\item All the distributions $E_i=E_{i-1}+[f,E_{i-1}]$ are involutive and there exists an integer $s$ such that $E_s=\TXU$.\label{d1:3}
			\end{enumerate}
		\end{theorem}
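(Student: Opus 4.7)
The plan is to derive Theorem \ref{thm:d1} from Theorem \ref{thm:linearizationByProlongations}, which reduces flatness with $d=1$ to the existence of an input transformation $\bar{u}=\Phi_u(x,u)$ such that one-fold prolonging $\bar{u}^1$ yields a system meeting the Jakubczyk--Respondek conditions of Theorem \ref{thm:sfl}. The strategy is to match the Jakubczyk--Respondek distribution sequence $\tilde{D}_i$ of the prolonged system, built from the drift $\tilde{f}=f+\bar{u}^1_1\,\partial_{\bar{u}^1}$ and input distribution $\tilde{D}_0=\Span{\partial_{\bar{u}^1_1},\partial_{\bar{u}^2}}$ on the prolonged state-input manifold, with the sequence constructed in Theorem \ref{thm:d1} on the original $\XU$.

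For necessity, a short induction shows that the $\tilde{D}_i$ of the prolonged system project down to the $D_i$ on $\XU$ until the first index $k_1$ at which a drift bracket generates the prolonged direction $\partial_{\bar{u}^1}$, and this is precisely the smallest index where $D_{k_1}$ fails to be involutive on $\XU$. The dimensional regularity of the $\tilde{D}_i$ thus forces item \ref{d1:1}. The alternative items \ref{d1:2a} and \ref{d1:2b} come from whether $\partial_{\bar{u}^1}$ enters already as a Cauchy direction of $D_{k_1}$ (case \ref{d1:2b}, where $E_{k_1}=D_{k_1-1}+\C{D_{k_1}}$ equals the projection of $\tilde{D}_{k_1}$) or only appears inside the involutive closure $\overline{D}_{k_1}$ (case \ref{d1:2a}, where the codimension and bracket-rank conditions \ref{d1:2a1}--\ref{d1:2a2} translate the dimensions of $\tilde{D}_{k_1}$ and $\tilde{D}_{k_1+1}$). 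From $k_1$ on, the distributions $E_i$ on $\XU$ coincide with the projections of $\tilde{D}_i$, so item \ref{d1:3} is exactly the Jakubczyk--Respondek condition for the prolonged system.

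For sufficiency, I would reverse this correspondence by constructing the input transformation from the distributions themselves. In case \ref{d1:2b}, working modulo $D_{k_1-1}$ and using its involutivity, the extra Cauchy direction in $\C{D_{k_1}}\setminus D_{k_1-1}$ is shown to admit a representative that can be chosen inside $D_0$, and this representative determines (up to a companion) the input $\bar{u}^1$ to be prolonged. In case \ref{d1:2a}, the extra direction in $\overline{D}_{k_1}\setminus D_{k_1}$ is lifted through the involutive closure to a representative in $D_0$, yielding the input to prolong. In either case the prolonged system's $\tilde{D}_i$-sequence projects precisely to the $E_i$-sequence on $\XU$, so condition \ref{d1:3} is the Jakubczyk--Respondek condition of the prolonged system. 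By Theorem \ref{thm:linearizationByProlongations} this gives a difference of at most one, and the non-involutivity of $D_{k_1}$ rules out $d=0$.

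The main obstacle I expect is case \ref{d1:2a} on the sufficiency side, where the direction distinguishing $\overline{D}_{k_1}$ from $D_{k_1}$ is only defined modulo $D_{k_1}$; exhibiting a well-behaved representative inside $D_0$, and verifying that the resulting input transformation really produces a prolonged system whose $\tilde{D}_i$ matches $E_i$ for $i\geq k_1$ (so that item \ref{d1:3} transfers), is the non-routine step. Comparable care is needed to confirm that cases \ref{d1:2a} and \ref{d1:2b} are mutually exclusive under \ref{d1:1} and cover all possibilities; once the dictionary between the $\tilde{D}_i$ on the prolonged manifold and the $D_i$, $E_i$ on $\XU$ is nailed down, the remainder of the argument is bookkeeping.
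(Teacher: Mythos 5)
Your necessity argument is essentially the paper's: Section \ref{se:proofd1} likewise invokes Theorem \ref{thm:linearizationByProlongations} and reads the conditions off the Jakubczyk--Respondek distributions $\Delta_i$ of the one-fold prolonged system, reusing the arguments given for items \ref{d2:3b}--\ref{d2:5} of Theorem \ref{thm:d2}. One caution on your dictionary: $\partial_{\bar{u}^1}$ is generated already at $\Delta_1$ (since $[f_p,\partial_{\bar{u}^1_1}]=-\partial_{\bar{u}^1}$), not at index $k_1$, so the $\Delta_i$ do not project onto the $D_i$; rather $\Delta_i\cap\TXU=D_{i-1}+\Span{\Ad_{\bar{f}}^{i}\partial_{\bar{u}^2}}$, an intermediate distribution strictly between $D_{i-1}$ and $D_i$, and the dimension count $\Dim{D_i}=2(i+1)$ has to be extracted from this by a separate contradiction argument, as the paper does for the $d=2$ analogue.

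The sufficiency direction is where you depart from the paper, and where the gap lies. The paper never reconstructs the input to be prolonged: it straightens out the involutive flag $D_0\subset\ldots\subset D_{k_1-1}\subset E_{k_1}\subset E_{k_1+1}\subset\ldots\subset E_s$ simultaneously and exhibits the structurally flat triangular form \eqref{eq:d1sysTriangular}, whose top variables form a flat output with $d=1$ directly. Your route --- identify $\bar{u}^1$ from the distributions, prolong, verify Theorem \ref{thm:sfl} for the prolonged system --- is the correspondence with the sequential test that the authors explicitly mention and decline to use, and its pivotal step is false as stated: since $D_0\subset D_{k_1-1}$, a vector field in $\C{D_{k_1}}\setminus D_{k_1-1}$ can \emph{never} be chosen inside $D_0$. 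What you actually need is that this Cauchy direction equals $\Ad_f^{k_1}w$ modulo $D_{k_1-1}$ for some $w\in D_0$, together with an effective way to recover $w$; that is exactly the nontrivial content of the PAI-representation machinery of the sequential test and cannot be asserted in one line. In case \ref{d1:2a} the situation is worse still: the intermediate distribution $E_{k_1}$ with $D_{k_1-1}\underset{1}{\subset}E_{k_1}\underset{1}{\subset}D_{k_1}$ and $[f,E_{k_1}]\subset\overline{D}_{k_1}$, which encodes the input to prolong, does not occur in the hypotheses at all, and its existence is itself a lemma (Lemma \ref{lem:missingDistributions}). Finally, Theorem \ref{thm:linearizationByProlongations} is the wrong implication to cite at the end: it goes from flatness with $d\leq 2$ to prolongation-linearizability, whereas you need the converse (standard, but different) fact that linearizability by a one-dimensional endogenous feedback implies flatness with $d\leq 1$.
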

		In Theorem \ref{thm:d1}, we have exactly one junction, which is graphically illustrated in Figure \ref{fig:d1}.
		\begin{figure}[h!]
			\centering
			\includegraphics[scale=1.3]{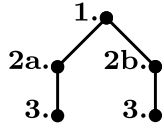}
			\caption{Overview of the possible paths in Theorem \ref{thm:d1}.}
			\label{fig:d1}
		\end{figure}
	
		Note that the items \ref{d2:3b} to \ref{d2:5} of Theorem \ref{thm:d2} in fact coincide with the items \ref{d1:1} to \ref{d1:3} of Theorem \ref{thm:d1} when $D_i$ and $k_1$ are replaced by $E_i$ and $k_2$. In \cite{NicolauRespondek:2016}, necessary and sufficient conditions for the case $d=1$ are provided via the Theorems 3.3 and 3.4 therein. These theorems are stated for the control affine case, but this is actually no restriction. It can be shown that the control affine system obtained by prolonging both inputs of a general nonlinear control system of the form \eqref{eq:sys}, \ie
		\begin{align*}
			\begin{aligned}
				\dot{x}&=f(x,u)&&&\dot{u}^1&=u^1_1&&&\dot{u}^2&=u^2_1
			\end{aligned}
		\end{align*}
		with the state $(x,u^1,u^2)$ and the input $(u^1_1,u^2_1)$, is flat with a certain difference $d$ if and only if the original system is flat with the same difference $d$. In fact, a flat output of the original system with a certain difference $d$ is also a flat output of the prolonged system with the same difference $d$ and vice versa. 
		In Theorem \ref{thm:d1}, we have exactly one junction (see Figure \ref{fig:d1}). This distinction of cases between $D_{k_1-1}\subset\C{D_{k_1}}$ and $D_{k_1-1}\not\subset\C{D_{k_1}}$ is motivated by the sequential test of the previous section (it corresponds to the distinction between AI form and PAI form in the sequential test). Our necessary and sufficient conditions for flatness with $d=1$ as stated in Theorem \ref{thm:d1} are very similar to those stated in \cite{NicolauRespondek:2016}. The main difference is in fact that in \cite{NicolauRespondek:2016} a distinction of cases is done between $\overline{D}_{k_1}\neq\TXU$ (Theorem 3.3 therein) and $\overline{D}_{k_1}=\TXU$ (Theorem 3.4 therein), instead of a distinction of cases between $D_{k_1-1}\subset\C{D_{k_1}}$ and $D_{k_1-1}\not\subset\C{D_{k_1}}$ as it is done in Theorem \ref{thm:d1}.\\
		
		It can easily be shown that all the distributions and all the conditions in Theorem \ref{thm:d2} and \ref{thm:d1} are invariant with respect to regular input transformations $\bar{u}=\Phi_u(x,u)$, \ie although the vector field $f=f^i(x,u)\partial_{x^i}$ associated with \eqref{eq:sys} and the vector field $\bar{f}=\bar{f}^i(x,\bar{u})\partial_{x^i}$ associated with the feedback modified system $\dot{x}^i=\bar{f}^i(x,\bar{u})$, where $\bar{f}^i(x,\bar{u})=f^i(x,\hat{\Phi}_u(x,\bar{u}))$ with the inverse $u=\hat{\Phi}_u(x,\bar{u})$ of the input transformation $\bar{u}=\Phi_u(x,u)$, are in general only equal modulo $D_0=\Span{\partial_{u}}=\Span{\partial_{\bar{u}}}$, the distributions $D_i$, $E_i$ and $F_i$ constructed from them in the above theorems coincide.
		\subsection{Verification of the Conditions}\label{se:verificatoinOfConditions}
			All the conditions of Theorem \ref{thm:d2} and \ref{thm:d1} are easily verifiable and require differentiation and algebraic operations only. Item \ref{d2:2b} of Theorem \ref{thm:d2} can be verified as follows. Choose any pair of vector fields $v_1,v_2\in D_{k_1-1}$ such that $D_{k_1-1}=D_{k_1-2}+\Span{v_1,v_2}$. Any vector field $v_c\in D_{k_1-1}$, $v_c\notin D_{k_1-2}$ can then be written as a non-trivial linear combination $v_c=\alpha^1v_1+\alpha^2v_2\Mod D_{k_1-2}$. Since $E_{k_1-1}=D_{k_1-2}+\Span{v_c}$ and $E_{k_1}=D_{k_1-1}+\Span{[v_c,f]}$, the condition $E_{k_1-1}\subset\C{E_{k_1}}$ implies $v_c\in\C{E_{k_1}}$ and in turn $[v_c,[v_c,f]]\in E_{k_1}$. Since for any $v_c$, we by construction have $E_{k_1}\subset D_{k_1}$, the condition $[v_c,[v_c,f]]\in E_{k_1}$ implies $[v_c,[v_c,f]]\in D_{k_1}$, which yields the necessary condition 
			\begin{align}\label{eq:algWithin}
				\begin{aligned}
					(\alpha^1)^2[v_1,[v_1,f]]+2\alpha^1\alpha^2[v_1,[v_2,f]]+(\alpha^2)^2[v_2,[v_2,f]]&\overset{!}{\in} D_{k_1}
				\end{aligned}
			\end{align}
			(where we have used that $D_{k_1-2}\subset\C{D_{k_1}}$, which can be shown based on the Jacobi identity). The following lemma states a crucial property of \eqref{eq:algWithin}.
			\begin{lemma}\label{lem:algWithinUniqueness}
				The condition \eqref{eq:algWithin} admits at most two independent non-trivial solutions $\alpha^j$.
			\end{lemma}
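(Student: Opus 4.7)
The plan is to reinterpret the condition \eqref{eq:algWithin} as the vanishing of a vector-valued binary quadratic form and then invoke the fact that a nonzero binary quadratic form has at most two projective roots. Let $V=\TXU/D_{k_1}$ and let $\pi\colon \TXU\to V$ denote the canonical projection. Write $w_{ij}=[v_i,[v_j,f]]$, so that \eqref{eq:algWithin} amounts to
\begin{equation*}
Q(\alpha)\;:=\;(\alpha^1)^2\pi(w_{11})+2\alpha^1\alpha^2\pi(w_{12})+(\alpha^2)^2\pi(w_{22})\;=\;0\quad\text{in }V.
\end{equation*}
Two solutions $\alpha,\tilde\alpha$ give the same vector field $v_c$ modulo $D_{k_1-2}$ precisely when they are proportional, so the claim is that $Q$ has at most two projective zeros in $\mathbb{R}^2\setminus\{0\}$.

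First I would verify that $Q$ is not the identically zero quadratic map. This is where the hypothesis $D_{k_1-1}\not\subset\C{D_{k_1}}$ is used. Since $D_{k_1-1}$ is involutive (being one of the $D_i$ with $i<k_1$), the brackets $[v_i,v_j]$ already lie in $D_{k_1-1}\subset D_{k_1}$; since $D_{k_1-2}\subset\C{D_{k_1}}$ (the Jacobi-identity fact cited in the text), brackets of elements of $D_{k_1-2}$ with elements of $D_{k_1}$ also lie in $D_{k_1}$. Any bracket $[v,w]$ with $v\in D_{k_1-1}$, $w\in D_{k_1}$ can therefore be expanded, using bilinearity and the spanning $D_{k_1}=D_{k_1-1}+\Span{[v_1,f],[v_2,f]}$, into a linear combination of the $w_{ij}$ plus terms already in $D_{k_1}$. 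Hence $D_{k_1-1}\subset\C{D_{k_1}}$ would be equivalent to $\pi(w_{11})=\pi(w_{12})=\pi(w_{22})=0$, and the violation of this inclusion forces at least one of these three classes in $V$ to be nonzero.

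Second, choose any linear functional $\lambda\colon V\to\mathbb{R}$ with $\lambda(\pi(w_{ij}))\neq 0$ for at least one pair $(i,j)$; the previous step guarantees such a $\lambda$ exists. Then
\begin{equation*}
(\lambda\circ Q)(\alpha^1,\alpha^2)\;=\;a(\alpha^1)^2+2b\alpha^1\alpha^2+c(\alpha^2)^2
\end{equation*}
with $(a,b,c)=(\lambda\pi(w_{11}),\lambda\pi(w_{12}),\lambda\pi(w_{22}))\neq 0$ is a nonzero homogeneous polynomial of degree two in $(\alpha^1,\alpha^2)$. Every solution of $Q(\alpha)=0$ is a fortiori a zero of this scalar quadratic form, and such a form factors into at most two linear factors, hence has at most two one-dimensional kernels.

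The only substantive step is the first one — cleanly reducing the failure of $D_{k_1-1}\subset\C{D_{k_1}}$ to the non-vanishing of one of the three relevant classes in $V$. Once that reduction is in hand, the rest is the elementary fact that a nonzero binary quadratic form has at most two projective roots, and the uniqueness statement of the lemma follows.
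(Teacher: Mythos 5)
Your proof is correct, but it takes a genuinely different route from the paper's. The paper argues by exhaustive case analysis on $\Dim{D_{k_1}+[D_{k_1-1},D_{k_1}]}-\Dim{D_{k_1}}\in\{1,2,3\}$: in each subcase it expresses the dependent double brackets as explicit linear combinations of the independent ones modulo $D_{k_1}$ and solves the resulting scalar equations, obtaining not only the bound but also the explicit candidate solutions and the sharper counts (no non-trivial solution in the codimension-$3$ case, at most one in the codimension-$2$ case, at most two in the codimension-$1$ case). You instead view \eqref{eq:algWithin} as the vanishing of a vector-valued binary quadratic form $Q$ with values in $\TXU/D_{k_1}$, show $Q\not\equiv 0$ from the hypothesis $D_{k_1-1}\not\subset\C{D_{k_1}}$, and then compose with a single covector annihilating $D_{k_1}$ but not all three classes $\pi(w_{ij})$ to obtain a nonzero scalar binary quadratic form, which has at most two real projective roots; since every solution of $Q=0$ is a root of that scalar form, the bound follows. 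Your reduction in the first step is sound: the correction terms from function-linearity of the bracket land in $D_{k_1-1}$ or $D_{k_1}$, the inclusion $D_{k_1-2}\subset\C{D_{k_1}}$ disposes of the Cauchy part, and the Jacobi identity gives $[v_2,[v_1,f]]=[v_1,[v_2,f]]\Mod D_{k_1}$, so the three symmetric classes indeed suffice (the paper records this last identity in a footnote; you use it implicitly and should state it). Your argument is shorter and avoids the case analysis entirely; what it gives up is the refined per-case information and the explicit solution formulas, neither of which is part of the statement or needed where the lemma is invoked. One minor caveat: the covector $\lambda$ and the resulting bound are pointwise, so the conclusion of at most two non-collinear solutions is to be read at a generic point, consistent with the paper's standing genericity assumptions.
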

			A proof of this lemma is provided in Appendix \ref{ap:supplements}. (A similar result has been proven in \cite{GstottnerKolarSchoberl:2020c} in the context of a certain structurally flat triangular form. The result which we prove here is more general.) Since \eqref{eq:algWithin} admits at most two independent non-trivial solutions, there also exist at most two vector fields $v_c=\alpha^1v_1+\alpha^2v_2$ which are not collinear$\Mod D_{k_1-2}$ and meet the above criterion. Since in $E_{k_1-1}=D_{k_1-2}+\Span{v_c}$ and $E_{k_1}=D_{k_1-1}+\Span{[v_c,f]}$ only the direction of $v_c$ and only the part of $v_c$ which is not contained in $D_{k_1-2}$ matters, there also exist at most two distinct such pairs of distributions.	
		\subsection{Computation of Flat Outputs}\label{se:computationOfFlatOutputs}
			The Theorems \ref{thm:d2} and \ref{thm:d1} allow us to check whether a system is flat with a difference of $d\leq 2$. Regarding the computation of the corresponding flat outputs with $d\leq 2$, we have the following result.
			\begin{theorem}\label{thm:flatOutputs}
				Assume that the system \eqref{eq:sys} meets the conditions of Theorem \ref{thm:d2} or \ref{thm:d1}. Flat outputs with $d=1$ or $d=2$ of the system can then be determined from the sequence of involutive distributions $E_i$ or $F_i$ the same way as linearizing outputs are determined form the sequence of involutive distributions $D_i$ involved in the test for static feedback linearizability in Theorem \ref{thm:sfl}.
			\end{theorem}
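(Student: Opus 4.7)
The plan is to leverage Theorem \ref{thm:linearizationByProlongations}, which asserts that a flat system with $d\leq 2$ becomes static feedback linearizable after applying a suitable input transformation $\bar u=\Phi_u(x,u)$ and $d$-fold prolonging a suitably chosen new input $\bar u^1$. Since a prolongation of an input is an endogenous dynamic feedback, linearizing outputs of the prolonged system are in one-to-one correspondence with flat outputs of the original system, and both share the same difference. Consequently, applying Theorem \ref{thm:sfl} to the prolonged, static feedback linearizable system produces all flat outputs of the original system with difference $d$ from its sequence of involutive distributions $\hat D_i$. The theorem will then follow once we establish that the extraction of outputs from $\hat D_i$ on the enlarged state-input manifold is equivalent to the identical extraction applied to the sequence $E_i$ or $F_i$ on $\XU$.

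The key technical step is therefore a distribution-by-distribution identification: I would show that the codistributions $\hat D_i^\perp$ on the prolonged manifold coincide, from a certain index onward, with the pullbacks of $E_i^\perp$ (respectively $F_i^\perp$) along the canonical projection onto $\XU$. Concretely, on the prolonged manifold $\hat D_0=\Span{\partial_{\bar u^1_1},\partial_{\bar u^2}}$, and iterated Lie brackets with $\hat f=f+\bar u^1_1\partial_{\bar u^1}$ (and, for $d=2$, with the second-prolongation drift $\hat f+\bar u^1_2\partial_{\bar u^1_1}$) successively absorb $\partial_{\bar u^1}$ together with the directions produced by $[f,\cdot]$ on $\XU$. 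Once these purely vertical directions are absorbed, $\hat D_i$ splits as a direct sum of the vertical factor $\Span{\partial_{\bar u^1_1}}$ (respectively $\Span{\partial_{\bar u^1_1},\partial_{\bar u^1_2}}$) and the lift of $E_i$ (respectively $F_i$) from $\XU$. The involutive-closure step $E_{k_1+1}=\overline{D}_{k_1}$ of branch \ref{d2:2aA} (or \ref{d1:2a}) precisely accounts for the extra dimension gained through $[\hat f,\partial_{\bar u^1_1}]=-\partial_{\bar u^1}$ on the prolonged manifold; the Cauchy-characteristic steps in \ref{d1:2b} and \ref{d2:4b}, together with the $v_c$ construction in \ref{d2:2b}, identify at the level of distributions on $\XU$ the image of the direction that is to be prolonged. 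All of these constructions are invariant under regular input transformations, as noted at the end of Section \ref{se:distributionTest}, which is what allows the correspondence to be stated intrinsically, independently of the particular $\Phi_u$ provided by Theorem \ref{thm:linearizationByProlongations}.

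Once this identification is in place, the codimension pattern and the index $l$ entering the recipe of Theorem \ref{thm:sfl} match between $\hat D_i$ and $E_i$ or $F_i$, so the recipes for extracting linearizing outputs literally produce the same functions on $\XU$; the resulting pairs $(\varphi^1,\varphi^2)$ are then flat outputs of the original system with difference $d$. The main obstacle is the case-by-case bookkeeping inherent in Theorem \ref{thm:d2}: subcase \ref{d2:2aB}, where $\overline{D}_{k_1}$ has codimension increase two and the construction switches to $D_{k_1}^{(1)}$, and subcases \ref{d2:2b}, \ref{d2:4b} built on Cauchy characteristics, each require verifying that the specific vector field singled out---$v_c$ from Lemma \ref{lem:algWithinUniqueness}, or a suitable generator of $\C{E_{k_2}}$ modulo $E_{k_2-1}$---really is the image on $\XU$ of $\partial_{\bar u^1}$ (or $\partial_{\bar u^1_1}$) on the prolonged manifold under the input transformation of Theorem \ref{thm:linearizationByProlongations}. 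Carried out in each branch, this reduces the argument to elementary computations entirely parallel to those appearing in the proof of Theorem \ref{thm:d2} in Section \ref{se:proofd2}, and an analogous but shorter verification handles Theorem \ref{thm:d1}.
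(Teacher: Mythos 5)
Your proposal is essentially correct but takes a genuinely different route from the paper. The paper's own proof is a short reduction to the \emph{sufficiency} parts of the proofs of Theorems \ref{thm:d2} and \ref{thm:d1}: there, the involutive sequence $D_0\subset\ldots\subset E_{k_1}\subset\ldots\subset F_s=\TXU$ is straightened out simultaneously on $\XU$, the system is brought into a structurally flat triangular form, and the top variables $\bar{x}_s$ (i.e.\ functions annihilating $F_{s-1}$, exactly as in Theorem \ref{thm:sfl}) are read off as the flat output. You instead argue through the prolonged system of Theorem \ref{thm:linearizationByProlongations}: apply Theorem \ref{thm:sfl} to its distributions $\Delta_i$ and transport the recipe back to $\XU$ via the identification $\Delta_i=\Span{\partial_{\bar{u}^1_2},\partial_{\bar{u}^1_1}}+F_i$. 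That identification is precisely what the \emph{necessity} part of the proof in Section \ref{se:proofd2} establishes, so your route reuses that machinery rather than the triangular form; it buys a literal, rather than analogical, link to Theorem \ref{thm:sfl}, at the price of having to justify that linearizing outputs of the prolonged system descend to $\XU$ and are flat outputs of the original system with the same difference $d$ (which holds, but needs saying), and of redoing the branch bookkeeping. Two details to tighten: (i) the distinguished vector fields are not ``images of $\partial_{\bar{u}^1}$'' --- in item \ref{d2:2b} the correct identification is $v_c=\Ad_{\bar{f}}^{k_1-1}\partial_{\bar{u}^2}$, and the generator of $\C{E_{k_2}}$ modulo $E_{k_2-1}$ is $\Ad_{\bar{f}}^{k_2}\partial_{\bar{u}^2}-\lambda\Ad_{\bar{f}}^{k_2-2}\partial_{\bar{u}^1}$ (Lemma \ref{lem:cuachyEk2}); and (ii) in the codimension-one tail the recipe of Theorem \ref{thm:sfl} also needs the last corank-two member of the sequence, which does not always appear explicitly in Theorems \ref{thm:d2} and \ref{thm:d1} and must be constructed separately (the distributions $E_{k_1}$, $F_{k_1}$, $F_{k_2}$ of Section \ref{se:computationOfFlatOutputs}, via Lemma \ref{lem:missingDistributions}); your ``the codimension pattern and the index $l$ match'' glosses over this.
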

			\begin{proof}
				The sufficiency parts of the proofs of Theorem \ref{thm:d2} and \ref{thm:d1} are done constructively. Based on the distributions involved in the conditions of the theorems, for each case a certain coordinate transformation such that the system takes a structurally flat triangular form is derived. The top variables in these triangular forms are then flat outputs with $d=1$ or $d=2$. For details, see sufficiency parts of the proofs of Theorem \ref{thm:d2} and \ref{thm:d1}. 
			\end{proof}
			It may happen that for the computation of flat outputs as stated in Theorem \ref{thm:flatOutputs}, a distribution which does not explicitly occur in Theorem \ref{thm:d2} or \ref{thm:d1} is needed. To be precise, if in Theorem \ref{thm:d1}, the conditions \ref{d1:2a} apply and we have $E_{k_1+1}=\TXU$ or $E_{k_1+1}\underset{1}{\subset}E_{k_1+2}$, we have to construct an involutive distribution $E_{k_1}$ which satisfies $D_{k_1-1}\underset{1}{\subset}E_{k_1}\underset{1}{\subset}D_{k_1}$ and $[f,E_{k_1}]\subset\overline{D}_{k_1}$ for the computation of flat outputs. Such a distribution always exists and despite from the case $E_{k_1+1}=\TXU$, it is also unique. 
			
			The construction is as follows. In case of $E_{k_1+1}=\TXU$, choose any function $\psi$ whose differential $\D\psi\neq 0$ annihilates $D_{k_1-1}$ and choose furthermore any pair of vector fields $v_1,v_2$ which complete $D_{k_1-1}$ to $D_{k_1}$, \ie such that $D_{k_1}=D_{k_1-1}+\Span{v_1,v_2}$. The distribution $E_{k_1}=D_{k_1-1}+\Span{(\D\psi\rfloor v_2)v_1-(\D\psi\rfloor v_1)v_2}$ can then be shown to be involutive, different choices for $\psi$ lead in general to different distributions $E_{k_1}$ (but different choices for $v_1,v_2$ have no effect). (A flat output with $d=1$ is then formed by any pair of functions $(\varphi^1,\varphi^2)$ satisfying $\Span{\D\varphi^1,\D\varphi^2}=E_{k_1}^\perp$, where we can always choose one of the components equal to $\psi$ since by construction $\D\psi\in E_{k_1}^\perp$.)
			
			In case of $E_{k_1+1}\neq \TXU$, the condition $\Dim{[f,D_{k_1}]+\overline{D}_{k_1}}=\Dim{\overline{D}_{k_1}}+1$ implies the existence of a vector field $v\in D_{k_1}$, $v\notin D_{k_1-1}$ such that $[v,f]\in\overline{D}_{k_1}$. The direction of $v$ is modulo $D_{k_1-1}$ unique and it follows that the distribution $E_{k_1}=D_{k_1-1}+\Span{v}$ constructed from it is involutive. (A flat output with $d=1$ is then formed by any pair of functions $(\varphi^1,\varphi^2)$ satisfying $\Span{\D\varphi^1}=E_{s-1}^\perp$ and $\Span{\D\varphi^1,\D\Lie_f\varphi^1,\ldots,\D\Lie_f^{s-k_1-1}\varphi^1,\D\varphi^2}=E_{k_1}^\perp$.)\\
			
			Similarly, if in Theorem \ref{thm:d2}, the conditions \ref{d2:4a} (or \ref{d2:2a}\ref{d2:2aB} and \ref{d2:4a}\ref{d2:4a2}, in which case we define $E_{k_2-1}=\C{D_{k_1}^{(1)}}$) apply and we have $F_{k_2+1}=\TXU$ or $F_{k_2+1}\underset{1}{\subset}F_{k_2+2}$, we have to construct an involutive distribution $F_{k_2}$ which satisfies $E_{k_2-1}\underset{1}{\subset}F_{k_2}\underset{1}{\subset}E_{k_2}$ and $[f,F_{k_2}]\subset\overline{E}_{k_2}$ for the computation of flat outputs. This can be done as just explained for Theorem \ref{thm:d1}, simply replace $D_{k_1-1}$ and $D_{k_1}$ by $E_{k_2-1}$ and $E_{k_2}$.\\
			
			Finally, if in Theorem \ref{thm:d2}, the conditions \ref{d2:3a} apply and we have $F_{k_1+1}\underset{1}{\subset}F_{k_1+2}$, we have to construct an involutive distribution $F_{k_1}$ which satisfies $E_{k_1-1}\underset{1}{\subset}F_{k_1}\underset{1}{\subset}E_{k_1}$ and $[f,F_{k_1}]\subset\overline{E}_{k_1}$ for the computation of flat outputs. The construction is again the same as just explained for Theorem \ref{thm:d1}, simply replace $D_{k_1-1}$ and $D_{k_1}$ by $E_{k_1-1}$ and $E_{k_1}$. Since we always have $\overline{E}_{k_1}\neq\TXU$ in this case, the distribution $F_{k_1}$ is always unique.
	\section{Examples}\label{se:examples}
		In the following we apply our results to some examples. We focus on the case $d=2$, as the novelty of this contribution are the results regarding the case $d=2$, which also cover the cases which cannot be handled with the results in \cite{NicolauRespondek:2016-2} (none of the following examples meets Assumption 2 therein). It should again be pointed out that from a computational point of view, the necessary and sufficient conditions for flatness with $d\leq 2$ in the form of Theorem \ref{thm:d2} and \ref{thm:d1} are a major improvement over the necessary and sufficient conditions in form of the sequential test proposed in \cite{GstottnerKolarSchoberl:2021}. 
		\subsection{VTOL}
			Consider the model of a planar VTOL aircraft
			\begin{align}\label{eq:vtol}
				\begin{aligned}
					\dot{x}&=v_x&&&\dot{v}_x&=\epsilon\cos(\theta)u^2-\sin(\theta)u^1\\
					\dot{z}&=v_z&&&\dot{v}_z&=\cos(\theta)u^1+\epsilon\sin(\theta)u^2-1\\
					\dot{\theta}&=\omega&&&\dot{\omega}&=u^2\,.
				\end{aligned}
			\end{align}
			This system is also treated in \eg \cite{FliessLevineMartinRouchon:1999}, \cite{SchoberlRiegerSchlacher:2010}, \cite{SchoberlSchlacher:2011}, \cite{GstottnerKolarSchoberl:2020c} or \cite{GstottnerKolarSchoberl:2021}. The distributions $D_0=\Span{\partial_{u^1},\partial_{u^2}}$ and
			\begin{align*}
				\begin{aligned}
					D_1&=\Span{\partial_{u^1},\partial_{u^2},-\sin(\theta)\partial_{v_x}+\cos(\theta)\partial_{v_z},\epsilon\cos(\theta)\partial_{v_x}+\epsilon\sin(\theta)\partial_{v_z}+\partial_\omega}
				\end{aligned}
			\end{align*}
			are involutive,
			\begin{align*}
				\begin{aligned}
					D_2&=\Span{\partial_{u^1},\partial_{u^2},-\sin(\theta)\partial_{v_x}+\cos(\theta)\partial_{v_z},\epsilon\cos(\theta)\partial_{v_x}+\epsilon\sin(\theta)\partial_{v_z}+\partial_\omega,\sin(\theta)\partial_x\\
					&\hspace{3em}-\cos(\theta)\partial_z-\omega\cos(\theta)\partial_{v_x}-\omega\sin(\theta)\partial_{v_z},\epsilon\cos(\theta)\partial_x+\epsilon\sin(\theta)\partial_z+\partial_\theta\\
					&\hspace{3em}+\epsilon\omega\sin(\theta)\partial_{v_x}-\epsilon\omega\cos(\theta)\partial_{v_z}}
				\end{aligned}
			\end{align*}
			is non-involutive, so we have $k_1=2$. With these distributions, item \ref{d2:1} of Theorem \ref{thm:d2} is met. The Cauchy characteristic distribution of $D_2$ follows as $\C{D_2}=\Span{\partial_{u^1},\partial_{u^2}}=D_0$ and thus $D_1\not\subset\C{D_2}$. So we are in item \ref{d2:2b} and have to construct a vector field $v_c\in D_1$, $v_c\notin D_0$ such that $E_1\subset\C{E_2}$ with $E_1=D_0+\Span{v_c}$ and $E_2=D_1+\Span{[v_c,f]}$. So we set $v_c=\alpha^1v_1+\alpha^2v_2$ with arbitrary vector fields $v_1,v_2$ which complete $D_0$ to $D_1$, \eg $v_1=-\sin(\theta)\partial_{v_x}+\cos(\theta)\partial_{v_z}$, $v_2=\epsilon\cos(\theta)\partial_{v_x}+\epsilon\sin(\theta)\partial_{v_z}+\partial_\omega$, and determine $\alpha^1,\alpha^2$ from \eqref{eq:algWithin}, which in the particular case reads
			\begin{align}\label{eq:algVtol}
				\begin{aligned}
					2\alpha^1\alpha^2(\cos(\theta)\partial_{v_x}+\sin(\theta)\partial_{v_z})+(\alpha^2)^2(2\epsilon\sin(\theta)\partial_{v_x}-2\epsilon\cos(\theta)\partial_{v_z})&\overset{!}{\in} D_2\,,
				\end{aligned}
			\end{align}
			and admits the two independent solutions $\alpha^1=\lambda$, $\alpha^2=0$ and $\alpha^1=0$, $\alpha^2=\lambda$, both with an arbitrary function $\lambda\neq 0$. We can choose $\lambda=1$ since only the direction of $v_c=\alpha^1v_1+\alpha^2v_2$ matters, so we simply have $v_{c,1}=v_1=-\sin(\theta)\partial_{v_x}+\cos(\theta)\partial_{v_z}$ and $v_{c,2}=v_2=\epsilon\cos(\theta)\partial_{v_x}+\epsilon\sin(\theta)\partial_{v_z}+\partial_\omega$. With both of these vector fields we indeed have $E_{1,j}\subset\C{E_{2,j}}$, where $E_{1,j}=D_0+\Span{v_{c,j}}$ and
			\begin{align}\label{eq:vtolE21}
				\begin{aligned}
					E_{2,1}&=\Span{\partial_{u^1},\partial_{u^2},-\sin(\theta)\partial_{v_x}+\cos(\theta)\partial_{v_z},\epsilon\cos(\theta)\partial_{v_x}+\epsilon\sin(\theta)\partial_{v_z}+\partial_\omega,\\
					&\hspace{3em}\sin(\theta)\partial_x-\cos(\theta)\partial_z-\omega\cos(\theta)\partial_{v_x}-\omega\sin(\theta)\partial_{v_z}}
				\end{aligned}
			\end{align}
			and
			\begin{align}\label{eq:vtolE22}
				\begin{aligned}
					E_{2,2}&=\Span{\partial_{u^1},\partial_{u^2},-\sin(\theta)\partial_{v_x}+\cos(\theta)\partial_{v_z},\epsilon\cos(\theta)\partial_{v_x}+\epsilon\sin(\theta)\partial_{v_z}+\partial_\omega,\\
					&\hspace{3em}\epsilon\cos(\theta)\partial_x+\epsilon\sin(\theta)\partial_z+\partial_\theta+\epsilon\omega\sin(\theta)\partial_{v_x}-\epsilon\omega\cos(\theta)\partial_{v_z}}\,.
				\end{aligned}
			\end{align}
			We thus have a branching point and have to continue with both of these distributions (we will be able to discard one of them in just a moment). The distributions $E_{2,j}$ are non-involutive, so we are in item \ref{d2:3a} Both meet the condition \ref{d2:3a}\ref{d2:3a1}, \ie $\Dim{\overline{E}_{2,j}}=\Dim{E_{2,j}}+1$. However, \ref{d2:3a}\ref{d2:3a2} is only satisfied by $E_{2,2}$, for $E_{2,1}$ we have $\Dim{[f,E_{2,1}]+\overline{E}_{2,1}}=\Dim{\overline{E}_{2,1}}+2$, so we can discard this branch and continue with $E_{2,2}$ only, where for ease of notation, we drop the second subscript from now on, \ie $E_2=E_{2,2}$. According to item \ref{d2:3a}\ref{d2:3a2}, we have $F_3=\overline{E}_2=\Span{\partial_{u^1},\partial_{u^2},\partial_\omega,\partial_{v_x},\partial_{v_z},\epsilon\cos(\theta)\partial_x+\epsilon\sin(\theta)\partial_z+\partial_\theta}$. Continuing this sequence as stated in item \ref{d2:5}, we obtain $F_4=\TXU$, so the conditions of item \ref{d2:5} are also met and we conclude that the system \eqref{eq:vtol} is flat with a difference of $d=2$. In conclusion, the system meets the items \ref{d2:1}, \ref{d2:2b}, \ref{d2:3a}, \ref{d2:5} (which corresponds to the 4th path from the left in Figure \ref{fig:d2}).\\
			
			According to Theorem \ref{thm:flatOutputs}, flat outputs with $d=2$ of the VTOL can thus be computed from the distributions $F_i$ the same way as linearizing outputs are determined from the sequence of involutive distributions involved in the test for static feedback linearizability. We have $F_3\underset{2}{\subset}F_4=\TXU$, flat outputs with $d=2$ are thus all pairs of functions $(\varphi^1,\varphi^2)$ satisfying $\Span{\D\varphi^1,\D\varphi^2}=F_3^\perp$. We have $F_3^\perp=\Span{\D x-\epsilon\cos(\theta)\D\theta,\D z-\epsilon\sin(\theta)\D\theta}$ and thus \eg $\varphi^1=x-\epsilon\sin(\theta)$, $\varphi^2=z+\epsilon\cos{\theta}$.
		\subsection{Academic Example 1}
			Consider the system
			\begin{align}\label{eq:sin}
				\begin{aligned}
					\dot{x}^1&=u^1\\
					\dot{x}^2&=u^2\\
					\dot{x}^3&=\sin(\tfrac{u^1}{u^2})\,,
				\end{aligned}
			\end{align}
			also considered in \cite{Levine:2009}, \cite{SchoberlSchlacher:2014}, \cite{GstottnerKolarSchoberl:2020c} and \cite{GstottnerKolarSchoberl:2021}. The distribution $D_0=\Span{\partial_{u^1},\partial_{u^2}}$ is involutive,
			\begin{align*}
				D_1&=\Span{\partial_{u^1},\partial_{u^2},\partial_{x^1}+\tfrac{1}{u^2}\cos(\tfrac{u^1}{u^2})\partial_{x^2},\partial_{x^2}-\tfrac{u^1}{(u^2)^2}\cos(\tfrac{u^1}{u^2})\partial_{x^3}}
			\end{align*}
			is non-involutive, so we have $k_1=1$ and item \ref{d2:1} is met. We have $D_0\not\subset\C{D_1}$, so we are in item \ref{d2:2b} and have to construct a vector field $v_c\in D_0=\Span{\partial_{u^1},\partial_{u^2}}$, \ie $v_c=\alpha^1\partial_{u^1}+\alpha^2\partial_{u^2}$, such that $E_0\subset\C{E_1}$ where $E_0=\Span{v_c}$ and $E_1=D_0+\Span{[v_c,f]}$. For that, we solve \eqref{eq:algWithin}, which in the particular case yields
			\begin{align*}
				\begin{aligned}
					&(\alpha^1)^2\sin(\tfrac{u^1}{u^2})(u^2)^2+2\alpha^1\alpha^2(\cos(\tfrac{u^1}{u^2})u^2-\sin(\tfrac{u^1}{u^2})u^1)u^2+\\
					&\hspace{12em}(\alpha^2)^2(\sin(\tfrac{u^1}{u^2})u^1-2\cos(\tfrac{u^1}{u^2})u^2)u^1\overset{!}{=}0\,,
				\end{aligned}
			\end{align*}
			and admits the two independent solutions $\alpha^1=\lambda u^1$, $\alpha^2=\lambda u^2$ and $\alpha^1=\lambda(u^1\tan(\tfrac{u^1}{u^2})-2u^2)$, $\alpha^2=\lambda u^2\tan(\tfrac{u^1}{u^2})$, both with an arbitrary function $\lambda\neq 0$, \eg $\lambda=1$ since only the direction of $v_c=\alpha^1\partial_{u^1}+\alpha^2\partial_{u^2}$ matters. It can easily be checked that only the vector field $v_c=u^1\partial_{u^1}+u^2\partial_{u^2}$, obtained form the first solution, satisfies $v_c\in\C{E_1}$, where $E_1=D_0+\Span{[v_c,f]}$ follows as $E_1=\Span{\partial_{u^1},\partial_{u^2},u^1\partial_{x^1}+u^2\partial_{x^2}}$. The distribution $E_1$ is non-involutive, so we are in item \ref{d2:3a} We have $\overline{E}_1=\Span{\partial_{u^1},\partial_{u^2},\partial_{x^1},\partial_{x^2}}$.
			The conditions  $\Dim{\overline{E}_1}=\Dim{E_1}+1$ and $\Dim{[f,E_1]+\overline{E}_1}=\Dim{\overline{E}_1}+1$ are met. According to item \ref{d2:3a}\ref{d2:3a2}, we have $F_2=\overline{E}_1$. Continuing this sequence as state in item \ref{d2:5}, we obtain $F_3=\TXU$, so the conditions of item \ref{d2:5} are also met and we conclude that the system \eqref{eq:sin} is flat with a difference of $d=2$. In conclusion, the system meets the items \ref{d2:1}, \ref{d2:2b}, \ref{d2:3a}, \ref{d2:5} (which corresponds to the 4th path from the left in Figure \ref{fig:d2}).\\
			
			According to Theorem \ref{thm:flatOutputs}, flat outputs with $d=2$ of the the system \eqref{eq:sin} can thus be computed from the distributions $F_i$ the same way as linearizing outputs are determined from the sequence of involutive distributions involved in the test for static feedback linearizability. However, in contrast to the previous example, not all the distributions needed for computing flat outputs explicitly occur in Theorem \ref{thm:d2}. We have $F_2\underset{1}{\subset}F_3=\TXU$, from which we find only one component $\varphi^1$ of the flat outputs, \ie $\Span{\D\varphi^1}=F_2^\perp=\Span{\D x^3}$ from which \eg $\varphi^1=x^3$ follows. In order to find a second component $\varphi^2$, we have to complete the sequence to $F_1\underset{2}{\subset}F_2\underset{1}{\subset}F_3=\TXU$, as stated below Theorem \ref{thm:flatOutputs} and then, we find $\varphi^2$ from $\Span{\D\varphi^1,\D\Lie_f\varphi^1,\D\varphi^2}=F_1^\perp$. We have $E_1=E_0+\Span{\partial_{u^1},u^1\partial_{x^1}+u^2\partial_{x^2}}$ and it follows that $[f,u^1\partial_{x^1}+u^2\partial_{x^2}]\in\overline{E}_{k_1}$. We thus have $F_1=E_0+\Span{u^1\partial_{x^1}+u^2\partial_{x^2}}=\Span{u^1\partial_{u^1}+u^2\partial_{u^2},u^1\partial_{x^1}+u^2\partial_{x^2}}$ and in turn $F_1^\perp=\Span{\D x^3,u^2\D u^1-u^1\D u^2,u^2\D x^1-u^1\D x^2}$ from which a possible second component follows as $\varphi^2=x^1-x^2\tfrac{u^1}{u^2}$. In conclusion, we have derived the flat output $\varphi^1=x^3$, $\varphi^2=x^1-x^2\tfrac{u^1}{u^2}$.\\
			
			Consider the following two examples
			\begin{align}\label{eq:productAndSqrt}
				&\begin{aligned}
					\dot{x}^1&=u^1\\
					\dot{x}^2&=u^2\\
					\dot{x}^3&=u^1u^2\,,
				\end{aligned}&
				\begin{aligned}
					\dot{x}^1&=u^1\\
					\dot{x}^2&=u^2\\
					\dot{x}^3&=\sqrt{u^1u^2}\,,
				\end{aligned}
			\end{align}
			also treated in \eg \cite{SchoberlSchlacher:2010} and \cite{SchoberlSchlacher:2015}. The first one of these two systems can be shown to be flat with a difference of $d=2$, where again the items \ref{d2:1}, \ref{d2:2b}, \ref{d2:3a}, \ref{d2:5} (which again corresponds to the 4th path from the left in Figure \ref{fig:d2}) are met. Item \ref{d2:2b} yields two different pairs of distributions $E_0$ and $E_1$ for this system, namely $E_{0,1}=\Span{\partial_{u^1}}$, $E_{1,1}=\Span{\partial_{u^1},\partial_{u^2},\partial_{x^1}+u^2\partial_{x^3}}$ and $E_{0,2}=\Span{\partial_{u^2}}$, $E_{1,2}=\Span{\partial_{u^1},\partial_{u^2},\partial_{x^2}+u^1\partial_{x^3}}$. For both branches the items \ref{d2:3a} and \ref{d2:5} are met, and we obtain $(x^2,x^3-x^1u^2)$ and $(x^1,x^3-x^2u^1)$ as possible flat outputs with $d=2$.
			
			However, the second systems in \eqref{eq:productAndSqrt} is a negative example, it does not meet the conditions for flatness with $d\leq 2$\footnote{We have $k_1=1$ and $D_0\not\subset\C{D_1}$. The conditions of Theorem \ref{thm:d1} are not met since $E_1$ of item \ref{d1:2b} is non-involutive. The conditions of Theorem \ref{thm:d2} are not met since $\Dim{\overline{E}_1}=\Dim{E_1}+2$, which violates \ref{d2:3a}\ref{d2:3a1}}, so we can conclude that if the system is flat, it must have a difference of $d\geq 3$. (The system is indeed flat with a difference of $d=3$, in \eg \cite{SchoberlSchlacher:2015} a corresponding flat output with $d=3$ has been derived.)
		\subsection{Academic Example 2}
			Consider the system
			\begin{align}\label{eq:academic}
				\begin{aligned}
					\dot{x}^1&=\arcsin(\tfrac{u^1+u^2}{x^2})-x^4&&&\dot{x}^3&=u^1\\
					\dot{x}^2&=x^4&&&\dot{x}^4&=u^2\,.
				\end{aligned}
			\end{align}
			The distribution $D_0=\Span{\partial_{u^1},\partial_{u^2}}$ is involutive,
			\begin{align*}
				D_1&=\Span{\partial_{u^1},\partial_{u^2},\partial_{x^1}+\sqrt{(x^2)^2-(u^1+u^2)^2}\partial_{x^3},\partial_{x^1}+\sqrt{(x^2)^2-(u^1+u^2)^2}\partial_{x^4}}
			\end{align*}
			is non-involutive, so we have $k_1=1$ and item \ref{d2:1} is met. We have $D_0\not\subset D_1$, so we are in item \ref{d2:2b} and have to construct a vector field $v_c\in D_0$, \ie $v_c=\alpha^1\partial_{u^1}+\alpha^2\partial_{u^2}$, such that $E_0\subset\C{E_1}$ where $E_0=\Span{v_c}$ and $E_1=D_0+\Span{[v_c,f]}$. For that, we solve \eqref{eq:algWithin}, which in the particular case yields
			\begin{align*}
				\begin{aligned}
					(\alpha^1+\alpha^2)^2&\overset{!}{=}0
				\end{aligned}
			\end{align*}
			and has the up to a multiplicative factor unique solution $\alpha^1=1$, $\alpha^2=-1$. The vector field $v_c=\partial_{u^1}-\partial_{u^2}$ obtained from this solution indeed meets $v_c\in\C{E_1}$ with $E_1=D_0+\Span
			{[v_c,f]}=\Span{\partial_{u^1},\partial_{u^2},\partial_{x^3}-\partial_{x^4}}$. The distribution $E_1$ is involutive, so we are in item \ref{d2:3b} The distribution
			\begin{align*}
				E_2&=\Span{\partial_{u^1},\partial_{u^2},\partial_{x^1}+\sqrt{(x^2)^2-(u^1+u^2)^2}\partial_{x^3},\partial_{x^1}+\sqrt{(x^2)^2-(u^1+u^2)^2}\partial_{x^4},\partial_{x^1}-\partial_{x^2}}
			\end{align*}
			is non-involutive, \ie $k_2=2$ and item \ref{d2:3b}\ref{d2:3b2} is met. We have $E_1\not\subset\C{E_2}$, so we are in item \ref{d2:4b} According to this item, we have $F_2=E_1+\C{E_2}$, which evaluates to			\begin{align*}
				F_2&=\Span{\partial_{u^1},\partial_{u^2},\partial_{x^1}-\partial_{x^2},\partial_{x^3}-\partial_{x^4}}
			\end{align*} 
			and is indeed involutive. Continuing this sequence as stated in item \ref{d2:5}, we obtain $F_3=\TXU$, so the conditions of item \ref{d2:5} are also met and we conclude that the system \eqref{eq:academic} is flat with a difference of $d=2$. In conclusion, the system meets the items \ref{d2:1}, \ref{d2:2b}, \ref{d2:3b}, \ref{d2:4b}, \ref{d2:5} (which corresponds to the 6th path from the left in Figure \ref{fig:d2}).\\
			
			According to Theorem \ref{thm:flatOutputs}, flat outputs with $d=2$ of this system are all pairs of functions $(\varphi^1,\varphi^2)$ satisfying $\Span{\D\varphi^1,\D\varphi^2}=F_2^\perp$. We have $F_2^\perp=\Span{\D x^1+\D x^2,\D x^3+\D x^4}$ and thus, \eg $\varphi^1=x^1+x^2$ and $\varphi^2=x^3+x^4$.
		\subsection{Coin on a Moving Table}
			Consider the following model of a coin rolling on a rotating table
			\begin{align}\label{eq:coin}
				\begin{aligned}
					\dot{x}&=\Omega\cos(\theta)(x\sin(\theta)-y\cos(\theta))+R\cos(\theta)u^2\\
					\dot{y}&=\Omega\sin(\theta)(x\sin(\theta)-y\cos(\theta))+R\sin(\theta)u^2\\
					\dot{\theta}&=u^1\\
					\dot{\phi}&=u^2\,,
				\end{aligned}
			\end{align}
			taken from \cite{Kai:2006} and also considered in \eg \cite{LiXuSuChu:2013} and \cite{LiNicolauRespondek:2016}. The distribution $D_0=\Span{\partial_{u^1},\partial_{u^2}}$ is involutive,
			\begin{align*}
				\begin{aligned}
					D_1&=\Span{\partial_{u^1},\partial_{u^2},\partial_\theta,R\cos(\theta)\partial_x+R\sin(\theta)\partial_y+\partial_\phi}
				\end{aligned}
			\end{align*}
			is non-involutive, so we have $k_1=1$ and item \ref{d2:1} is met. We have $\C{D_1}=D_0$, so we are in item \ref{d2:2a} We furthermore have $\Dim{\overline{D}_1}=\Dim{D_1}+2$ (the involutive closure of $D_1$ is given by $\overline{D}_1=\TXU$), so we are in the subcase \ref{d2:2a}\ref{d2:2aB} The first derived flag of $D_1$ is given by
			\begin{align*}
				\begin{aligned}
					D_1^{(1)}&=\Span{\partial_{u^1},\partial_{u^2},\partial_\theta,R\cos(\theta)\partial_x+R\sin(\theta)\partial_y+\partial_\phi,\sin(\theta)\partial_x-\cos(\theta)\partial_y}\,,
				\end{aligned}
			\end{align*}
			its Cauchy characteristic distribution follows as $\C{D_1^{(1)}}=\Span{\partial_{u^1},\partial_{u^2},R\cos(\theta)\partial_x+R\sin(\theta)\partial_y+\partial_\phi}$ and the condition $[f,\C{D_1^{(1)}}]\subset D_1^{(1)}$ is indeed met. By definition we have $E_2=D_1^{(1)}$, and have to continue with item \ref{d2:4a}\ref{d2:4a2} The item \ref{d2:4a}\ref{d2:4a2} is indeed met with $\overline{E}_2=\TXU$ and thus, also \ref{d2:5} is met (with $F_3=\overline{E}_2=\TXU$), proving that the system is flat with a difference of $d=2$. In conclusion, the system meets the items \ref{d2:1}, \ref{d2:2a}\ref{d2:2aB}, \ref{d2:4a}\ref{d2:4a2}, \ref{d2:5} (which corresponds to the 3rd path from the left in Figure \ref{fig:d2}).\\
			
			According to Theorem \ref{thm:flatOutputs} (see also below Theorem \ref{thm:flatOutputs}), flat outputs with $d=2$ are all pairs of functions $(\varphi^1,\varphi^2)$ satisfying $\Span{\D\varphi^1,\D\varphi^2}=F_2^\perp$, where $F_2$ is an arbitrary involutive distribution satisfying $E_1\underset{1}{\subset}F_2\underset{1}{\subset}E_2$, where $E_1=\C{D_{k_1}^{(1)}}$. To construct such a distribution $F_2$, we first choose an arbitrary function $\psi$ whose differential $\D\psi\neq 0$ annihilates $E_1$, \eg $\psi=\theta$, and a pair of vector fields $v_1,v_2$ which completes $E_1$ to $E_2$, \eg $v_1=\partial_\theta$, $v_2=\sin(\theta)\partial_x-\cos(\theta)\partial_y$. A possible distribution $F_2$ is then given by $F_2=E_1+\Span{(\D\psi\rfloor v_2)v_1-(\D\psi\rfloor v_1)v_2}=E_1+\Span{\sin(\theta)\partial_x-\cos(\theta)\partial_y}$, which yields $\varphi^1=\theta$, $\varphi^2=R\phi-x\cos(\theta)-y\sin(\theta)$ as a possible flat output.	
	\section{Proof of Theorem \ref{thm:d2}}\label{se:proofd2}
		\subsection*{Necessity}
			Consider a two-input system of the form \eqref{eq:sys} and assume that it is flat with a difference of $d=2$. Let $k_1$ be defined as in Theorem \ref{thm:d2}, \ie the smallest integer such that $D_{k_1}$ is non-involutive (if $D_{k_1}$ would not exist, the system would be static feedback linearizable, which contradicts with the assumption that $d=2$). According to Theorem \ref{thm:linearizationByProlongations}, there exists an input transformation $\bar{u}=\Phi_u(x,u)$ with inverse $u=\hat{\Phi}(x,\bar{u})$ such that the system obtained by two-fold prolonging the new input $\bar{u}^1$, \ie the system
			\begin{align}\label{eq:d2twoFoldProlonged}
				\begin{aligned}
					\dot{x}&=f(x,\hat{\Phi}_u(x,\bar{u}))=\bar{f}(x,\bar{u})&&&\dot{\bar{u}}^1&=\bar{u}^1_1&&&\dot{\bar{u}}^1_1&=\bar{u}^1_2
				\end{aligned}
			\end{align}
			with the state $(x,\bar{u}^1,\bar{u}^1_1)$ and the input $(\bar{u}^1_2,\bar{u}^2)$, is static feedback linearizable. Therefore, according to Theorem \ref{thm:sfl}, with the vector field $f_p=\bar{f}^{i}(x,\bar{u})\partial_{x^i}+\bar{u}^1_1\partial_{\bar{u}^1}+\bar{u}^1_2\partial_{\bar{u}^1_1}$, the distributions
			\begin{align}\label{eq:d2twoFoldProlongedDistributions}
				\begin{aligned}
					\Delta_0&=\Span{\partial_{\bar{u}^1_2},\partial_{\bar{u}^2}}\\
					\Delta_1&=\Span{\partial_{\bar{u}^1_2},\partial_{\bar{u}^1_1},\partial_{\bar{u}^2},[f_p,\partial_{\bar{u}^2}]}\\
					\Delta_2&=\Span{\partial_{\bar{u}^1_2},\partial_{\bar{u}^1_1},\partial_{\bar{u}^1},\partial_{\bar{u}^2},[f_p,\partial_{\bar{u}^2}],\Ad_{f_p}^2\partial_{\bar{u}^2}}\\
					\Delta_3&=\Span{\partial_{\bar{u}^1_2},\partial_{\bar{u}^1_1},\partial_{\bar{u}^1},\partial_{\bar{u}^2},[f_p,\partial_{\bar{u}^1}],[f_p,\partial_{\bar{u}^2}],\Ad_{f_p}^2\partial_{\bar{u}^2},\Ad_{f_p}^3\partial_{\bar{u}^2}}\\
					&\vdotswithin{=}\\
					\Delta_s&=\Span{\partial_{\bar{u}^1_2},\partial_{\bar{u}^1_1},\partial_{\bar{u}^1},\partial_{\bar{u}^2},\partial_x}
				\end{aligned}
			\end{align}
			are all involutive (that the integer $s$ in the last line of \eqref{eq:d2twoFoldProlongedDistributions} coincides with $s$ in item \ref{d2:5} is shown later). Some simplifications can be made, \ie there exist simpler bases for these distributions, as the following lemma asserts.
			\begin{lemma}\label{lem:d2SimplificationOfDistributions}
				The distributions \eqref{eq:d2twoFoldProlongedDistributions} can be simplified to either
				\begin{align}\label{eq:d2twoFoldProlongedDistributionsSimplifiedCase1}
					\begin{aligned}
						\Delta_0&=\Span{\partial_{\bar{u}^1_2},\partial_{\bar{u}^2}}\\
						\Delta_1&=\Span{\partial_{\bar{u}^1_2},\partial_{\bar{u}^1_1},\partial_{\bar{u}^2},[\bar{f},\partial_{\bar{u}^2}]}\\
						\Delta_2&=\Span{\partial_{\bar{u}^1_2},\partial_{\bar{u}^1_1},\underbrace{\partial_{\bar{u}^1},\partial_{\bar{u}^2}}_{D_0},[\bar{f},\partial_{\bar{u}^2}],\Ad_{\bar{f}}^2\partial_{\bar{u}^2}}\\
						\Delta_3&=\Span{\partial_{\bar{u}^1_2},\partial_{\bar{u}^1_1},\underbrace{\partial_{\bar{u}^1},\partial_{\bar{u}^2},[\bar{f},\partial_{\bar{u}^1}],[\bar{f},\partial_{\bar{u}^2}]}_{D_1},\Ad_{\bar{f}}^2\partial_{\bar{u}^2},\Ad_{\bar{f}}^3\partial_{\bar{u}^2}}\\
						&\vdotswithin{=}\\
						\Delta_{k_1}&=\Span{\partial_{\bar{u}^1_2},\partial_{\bar{u}^1_1},\underbrace{\partial_{\bar{u}^1},\partial_{\bar{u}^2},[\bar{f},\partial_{\bar{u}^1}],[\bar{f},\partial_{\bar{u}^2}],\ldots,\Ad_{\bar{f}}^{k_1-2}\partial_{\bar{u}^1},\Ad_{\bar{f}}^{k_1-2}\partial_{\bar{u}^2}}_{D_{k_1-2}},\Ad_{\bar{f}}^{k_1-1}\partial_{\bar{u}^2},\Ad_{\bar{f}}^{k_1}\partial_{\bar{u}^2}}\\
						\Delta_{k_1+1}&=\Span{\partial_{\bar{u}^1_2},\partial_{\bar{u}^1_1},\underbrace{\partial_{\bar{u}^1},\partial_{\bar{u}^2},[\bar{f},\partial_{\bar{u}^1}],[\bar{f},\partial_{\bar{u}^2}],\ldots,\Ad_{\bar{f}}^{k_1-1}\partial_{\bar{u}^1},\Ad_{\bar{f}}^{k_1-1}\partial_{\bar{u}^2}}_{D_{k_1-1}},\Ad_{\bar{f}}^{k_1}\partial_{\bar{u}^2},\Ad_{\bar{f}}^{k_1+1}\partial_{\bar{u}^2}}\\
						\Delta_{k_1+2}&=\Span{\partial_{\bar{u}^1_2},\partial_{\bar{u}^1_1},\underbrace{\partial_{\bar{u}^1},\partial_{\bar{u}^2},[\bar{f},\partial_{\bar{u}^1}],[\bar{f},\partial_{\bar{u}^2}],\ldots,\Ad_{\bar{f}}^{k_1}\partial_{\bar{u}^1},\Ad_{\bar{f}}^{k_1}\partial_{\bar{u}^2}}_{D_{k_1}},\Ad_{\bar{f}}^{k_1+1}\partial_{\bar{u}^2},\Ad_{\bar{f}}^{k_1+2}\partial_{\bar{u}^2}}\\
						&\vdotswithin{=}\\
						\Delta_s&=\Span{\partial_{\bar{u}^1_2},\partial_{\bar{u}^1_1},\partial_{\bar{u}^1},\partial_{\bar{u}^2},\partial_x}\,.
					\end{aligned}
				\end{align}
				or
				\begin{align}\label{eq:d2twoFoldProlongedDistributionsSimplifiedCase2}
					\begin{aligned}
						\Delta_0&=\Span{\partial_{\bar{u}^1_2},\partial_{\bar{u}^2}}\\
						\Delta_1&=\Span{\partial_{\bar{u}^1_2},\partial_{\bar{u}^1_1},\partial_{\bar{u}^2},[\bar{f},\partial_{\bar{u}^2}]}\\
						\Delta_2&=\Span{\partial_{\bar{u}^1_2},\partial_{\bar{u}^1_1},\underbrace{\partial_{\bar{u}^1},\partial_{\bar{u}^2}}_{D_0},[\bar{f},\partial_{\bar{u}^2}],[\partial_{\bar{u}^1},[\bar{f},\partial_{\bar{u}^2}]]}\\
						\Delta_3&=\Span{\partial_{\bar{u}^1_2},\partial_{\bar{u}^1_1},\underbrace{\partial_{\bar{u}^1},\partial_{\bar{u}^2},[\bar{f},\partial_{\bar{u}^1}],[\bar{f},\partial_{\bar{u}^2}]}_{D_1},[\partial_{\bar{u}^1},[\bar{f},\partial_{\bar{u}^2}]],[\bar{f},[\partial_{\bar{u}^1},[\bar{f},\partial_{\bar{u}^2}]]]}\\
						&\vdotswithin{=}\\
						\Delta_s&=\Span{\partial_{\bar{u}^1_2},\partial_{\bar{u}^1_1},\partial_{\bar{u}^1},\partial_{\bar{u}^2},\partial_x}\,.
					\end{aligned}
				\end{align}
			\end{lemma}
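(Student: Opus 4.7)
The plan is to compute the $\Delta_i$ inductively, exploiting the block structure $f_p = \bar{f} + \bar{u}^1_1\partial_{\bar{u}^1} + \bar{u}^1_2\partial_{\bar{u}^1_1}$, where $\bar{f} = \bar{f}^i(x,\bar{u})\partial_{x^i}$. The elementary identities
\[
    [f_p,\partial_{\bar{u}^1_2}]=-\partial_{\bar{u}^1_1},\qquad [f_p,\partial_{\bar{u}^1_1}]=-\partial_{\bar{u}^1},\qquad [f_p,\partial_{\bar{u}^2}]=[\bar{f},\partial_{\bar{u}^2}]
\]
immediately give $\Delta_1$, and the single key identity
\[
    [f_p,V]=[\bar{f},V]+\bar{u}^1_1[\partial_{\bar{u}^1},V]\quad\text{valid for any }V=V^i(x,\bar{u})\partial_{x^i},
\]
drives the remainder of the induction. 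In particular, $\partial_{\bar{u}^1_1}$ lies in every $\Delta_k$ with $k\ge 1$ and $\partial_{\bar{u}^1}$ in every $\Delta_k$ with $k\ge 2$, so the \emph{cross-term} $\bar{u}^1_1[\partial_{\bar{u}^1},V]$ may be reduced modulo these two directions at every step.

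I would then show by induction on $k$ that the state-space part of $\Delta_k$ is generated by iterating $\Ad_{\bar{f}}$ on $\partial_{\bar{u}^2}$, modulo the corrections introduced by the cross-term. At the crucial step $\Delta_2$, computing $[f_p,[\bar{f},\partial_{\bar{u}^2}]] = \Ad_{\bar{f}}^2\partial_{\bar{u}^2}+\bar{u}^1_1[\partial_{\bar{u}^1},[\bar{f},\partial_{\bar{u}^2}]]$ produces the dichotomy. If $[\partial_{\bar{u}^1},[\bar{f},\partial_{\bar{u}^2}]]\in D_1$, the correction is absorbed and $\Ad_{\bar{f}}^2\partial_{\bar{u}^2}$ is the clean new generator; I would then propagate this situation to all later steps by the same argument, using $D_{k-2}\subset\C{D_{k_1}}$ (supplied by item \ref{d2:1} together with the Jacobi identity, as invoked in Section \ref{se:verificatoinOfConditions}) to ensure $[\partial_{\bar{u}^1},\Ad_{\bar{f}}^{j}\partial_{\bar{u}^2}]\in D_j$ at each step, thereby obtaining the basis \eqref{eq:d2twoFoldProlongedDistributionsSimplifiedCase1}.

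If instead $[\partial_{\bar{u}^1},[\bar{f},\partial_{\bar{u}^2}]]\notin D_1$, the cross-term contributes a genuinely new direction at $k=2$, which can be taken as the second generator of $\Delta_2$. A parallel induction then shows that from this point on the iterated $\bar{f}$-brackets of $[\partial_{\bar{u}^1},[\bar{f},\partial_{\bar{u}^2}]]$ take over the role played by the $\Ad_{\bar{f}}^j\partial_{\bar{u}^2}$-tower in Case 1: any further cross-term is proportional, modulo the already-constructed lower $\Delta_{k-1}$, to a vector already present, because applying $[\partial_{\bar{u}^1},\cdot\,]$ to the propagating generators either falls into the preceding distribution or reproduces the new Case-2 generator itself. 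This delivers \eqref{eq:d2twoFoldProlongedDistributionsSimplifiedCase2}.

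The main obstacle is the bookkeeping at each inductive step: one must verify that bracketing every basis vector field of $\Delta_k$ with $f_p$ produces at most two new state-space directions (beyond the contributions of $\partial_{\bar{u}^1_2}$, $\partial_{\bar{u}^1_1}$), that the dimension counts are attained (in Case 1, $\Dim{\Delta_k}=\Dim{D_{k-2}}+4$, matched against item \ref{d2:1}), and that the Case 1/Case 2 split is truly exhaustive. The latter reduces to a careful rank argument on the state-direction part of the distribution: the iterated cross-terms $[\partial_{\bar{u}^1},\Ad_{\bar{f}}^j\partial_{\bar{u}^2}]\bmod D_j$ either vanish for all $j$ (Case 1) or first become independent at some minimal level, at which point the single extra Case-2 generator suffices, since all subsequent $[\partial_{\bar{u}^1},\cdot\,]$-corrections are controlled by Jacobi-based identities involving $\Ad_{\bar{f}}$ and $\partial_{\bar{u}^1}$.
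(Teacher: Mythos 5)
Your overall strategy --- compute the $\Delta_i$ inductively from $[f_p,V]=[\bar{f},V]+\bar{u}^1_1[\partial_{\bar{u}^1},V]$ for $V=V^i(x,\bar{u})\partial_{x^i}$ and let the cross-term drive a dichotomy at $\Delta_2$ --- is the same as the paper's, but the decisive steps are either mis-stated or missing. The case split is wrong as formulated: the correct trigger (and the paper's) is whether $\Ad_{\bar{f}}^2\partial_{\bar{u}^2}$ lies in $S:=\Span{\partial_{\bar{u}^1_2},\partial_{\bar{u}^1_1},\partial_{\bar{u}^1},\partial_{\bar{u}^2},[\bar{f},\partial_{\bar{u}^2}]}$, not whether $[\partial_{\bar{u}^1},[\bar{f},\partial_{\bar{u}^2}]]\in D_1$. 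The key fact, absent from your sketch, is that bracketing the sixth generator $\Ad_{\bar{f}}^2\partial_{\bar{u}^2}+\bar{u}^1_1[\partial_{\bar{u}^1},[\bar{f},\partial_{\bar{u}^2}]]$ with $\partial_{\bar{u}^1_1}\in\Delta_2$ and using $\Dim{\Delta_2}\leq 6$ forces $\Ad_{\bar{f}}^2\partial_{\bar{u}^2}$ and $[\partial_{\bar{u}^1},[\bar{f},\partial_{\bar{u}^2}]]$ to be collinear modulo $S$; only this guarantees that a single new generator spans $\Delta_2$ and that exactly the two displayed alternatives arise. By contrast, $[\partial_{\bar{u}^1},[\bar{f},\partial_{\bar{u}^2}]]\in D_1$ does not "absorb" the cross-term, since the Case-1 $\Delta_2$ does not contain $[\bar{f},\partial_{\bar{u}^1}]$. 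Moreover, your appeal to item \ref{d2:1} (both for $D_{k_1-2}\subset\C{D_{k_1}}$ and for the dimension counts $\Dim{\Delta_k}=\Dim{D_{k-2}}+4$) is circular: the lemma is used in the paper precisely to prove the necessity of item \ref{d2:1}, so its proof may only use the involutivity of the $\Delta_i$, the involutivity of $D_0,\ldots,D_{k_1-1}$, and the non-involutivity of $D_{k_1}$. The Cauchy-characteristic mechanism is also both unnecessary and insufficient at levels $j\geq k_1+1$: the correct propagation is simply that for $j\geq 2$ the involutive $\Delta_j$ contains $\partial_{\bar{u}^1}$, hence $[\partial_{\bar{u}^1},V]\in\Delta_j$ for every generator $V$ and $[f_p,V]\equiv[\bar{f},V]\Mod\Delta_j$. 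This also kills your "first becomes independent at some minimal level" scenario --- branching can only occur at $\Delta_2$, which is why the lemma lists exactly two forms.

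Finally, you omit the one substantive contradiction argument in the paper's proof: if $k_1\geq 2$ and $\Ad_{\bar{f}}^2\partial_{\bar{u}^2}\in S$, then (using $[\partial_{\bar{u}^1},[\bar{f},\partial_{\bar{u}^2}]]\in D_1$ by involutivity of $D_1$) $\Delta_2$ equals either $\Span{\partial_{\bar{u}^1_2},\partial_{\bar{u}^1_1}}+D_1$ or $\Span{\partial_{\bar{u}^1_2},\partial_{\bar{u}^1_1}}+\Span{\partial_{\bar{u}^1},\partial_{\bar{u}^2},[\bar{f},\partial_{\bar{u}^2}]}$, and either chain propagates to $\Delta_{k_1+1}$ resp.\ $\Delta_{k_1+2}$ and forces $D_{k_1}$ to be involutive, a contradiction. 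Without this step you can neither establish form \eqref{eq:d2twoFoldProlongedDistributionsSimplifiedCase1} with the $D_j$-blocks up to $D_{k_1}$ for $k_1\geq 2$, nor confine form \eqref{eq:d2twoFoldProlongedDistributionsSimplifiedCase2} to the case $k_1=1$, which is exactly how the lemma is exploited in the remainder of the necessity proof.
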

			It follows from the proof of Lemma \ref{lem:d2SimplificationOfDistributions}, which is provided in Appendix \ref{ap:lemmas}, that in case of $k_1\geq 2$ we always have the form \eqref{eq:d2twoFoldProlongedDistributionsSimplifiedCase1}. The form \eqref{eq:d2twoFoldProlongedDistributionsSimplifiedCase2} is only relevant if in the case $k_1=1$ we have $\Ad_{\bar{f}}^2\partial_{\bar{u}^2}\in\Span{\partial_{\bar{u}^1_2},\partial_{\bar{u}^1_1},\partial_{\bar{u}^1},\partial_{\bar{u}^2},[\bar{f},\partial_{\bar{u}^2}]}$. In all other cases, the distributions $\Delta_i$ are indeed of the form \eqref{eq:d2twoFoldProlongedDistributionsSimplifiedCase1}.\\
			
			\paragraph*{Necessity of Item \ref{d2:1}} From the assumption $\Rank{\partial_{u}f}=2$ (\ie the assumption that the system has no redundant inputs), it immediately follows that $\Dim{D_1}=4$, which in case of $k_1=1$ already shows the necessity of item \ref{d2:1} For $k_1\geq 2$, the involutive distributions $\Delta_i$ can always be written in the form \eqref{eq:d2twoFoldProlongedDistributionsSimplifiedCase1} (see below Lemma \ref{lem:d2SimplificationOfDistributions}), based on which $\Dim{D_i}=2(i+1)$, $i=2,\ldots,k_1$ can  be shown by contradiction. Assume that $\Dim{D_i}<2(i+1)$ for some $i$ where $2\leq i\leq k_1$ and let $2\leq l\leq k_1$ be the smallest integer such that $\Dim{D_l}<2(l+1)$. Thus, $\Ad_{\bar{f}}^l\partial_{\bar{u}^1}$ and $\Ad_{\bar{f}}^l\partial_{\bar{u}^2}$ are collinear modulo $D_{l-1}$. Both of these vector fields being contained in $D_{l-1}$, \ie $\Ad_{\bar{f}}^l\partial_{\bar{u}^1}\in D_{l-1}$ and $\Ad_{\bar{f}}^l\partial_{\bar{u}^2}\in D_{l-1}$, contradicts with $D_{k_1}$ being non-involutive, it would lead to $D_{k_1}=D_{l-1}$, \ie the sequence would stop growing from $D_{l-1}$ on. 
			
			If $\Ad_{\bar{f}}^l\partial_{\bar{u}^2}\in D_{l-1}$, we have (see \eqref{eq:d2twoFoldProlongedDistributionsSimplifiedCase1})
			\begin{align}\label{eq:delta_lp1}
				\Delta_{l+1}&=\Span{\partial_{\bar{u}^1_2},\partial_{\bar{u}^1_1}}+D_{l-1}+\Span{\underbrace{\Ad_{\bar{f}}^{l+1}\partial_{\bar{u}^2}}_{\in D_l}}\,,
			\end{align}
			where $\Ad_{\bar{f}}^{l+1}\partial_{\bar{u}^2}\in D_l$ follows from the assumption $\Ad_{\bar{f}}^l\partial_{\bar{u}^2}\in D_{l-1}$ since by construction $D_l=D_{l-1}+[\bar{f},D_{l-1}]$. Since $D_{l-1}\underset{1}{\subset}D_l$, we either have $\Delta_{l+1}=\Span{\partial_{\bar{u}^1_2},\partial_{\bar{u}^1_1}}+D_l$ or $\Delta_{l+1}=\Span{\partial_{\bar{u}^1_2},\partial_{\bar{u}^1_1}}+D_{l-1}$, \ie the vector field $\Ad_{\bar{f}}^{l+1}\partial_{\bar{u}^2}$ in \eqref{eq:delta_lp1} either completes $D_{l-1}$ to $D_l$, or it is contained in $D_{l-1}$. The case $\Delta_{l+1}=\Span{\partial_{\bar{u}^1_2},\partial_{\bar{u}^1_1}}+D_l$ would lead to $\Delta_{k_1+1}=\Span{\partial_{\bar{u}^1_2},\partial_{\bar{u}^1_1}}+D_{k_1}$, which would imply that $D_{k_1}$ is involutive, contradicting with $D_{k_1}$ being non-involutive. Similarly, the case $\Delta_{l+1}=\Span{\partial_{\bar{u}^1_2},\partial_{\bar{u}^1_1}}+D_{l-1}$ would lead to $\Delta_{k_1+2}=\Span{\partial_{\bar{u}^1_2},\partial_{\bar{u}^1_1}}+D_{k_1}$, which would again imply that $D_{k_1}$ is involutive, again contradicting with $D_{k_1}$ being non-involutive.
			
			If $\Ad_{\bar{f}}^l\partial_{\bar{u}^2}\notin D_{l-1}$, we necessarily have $D_l=D_{l-1}+\Span{\Ad_{\bar{f}}^l\partial_{\bar{u}^2}}$ (since by assumption $\Dim{D_l}<2(l+1)$ and $\Dim{D_{l-1}}=2l$), which leads to $D_{k_1}=D_{k_1-1}+\Span{\Ad_{\bar{f}}^{k_1}\partial_{\bar{u}^2}}$. From the involutivity of $\Delta_{k_1}\cap\TXU=D_{k_1-2}+\Span{\Ad_{\bar{f}}^{k_1-1}\partial_{\bar{u}^2},\Ad_{\bar{f}}^{k_1}\partial_{\bar{u}^2}}\subset D_{k_1}$, it follows that $[\Ad_{\bar{f}}^{k_1-1}\partial_{\bar{u}^2},\Ad_{\bar{f}}^{k_1}\partial_{\bar{u}^2}]\in D_{k_1}$, which together with the involutivity of $D_{k_1-1}$ and the fact that $\Ad_{\bar{f}}^{k_1-1}\partial_{\bar{u}^2}\in D_{k_1-1}$ implies that $\Ad_{\bar{f}}^{k_1-1}\partial_{\bar{u}^2}\in\C{D_{k_1}}$ (recall that $D_{k_1}=D_{k_1-1}+\Span{\Ad_{\bar{f}}^{k_1}\partial_{\bar{u}^2}}$). From the Jacobi identity
			\begin{align*}
				\begin{aligned}
					[\Ad_{\bar{f}}^{k_1-1}\partial_{\bar{u}^1},[\bar{f},\Ad_{\bar{f}}^{k_1-1}\partial_{\bar{u}^2}]]+\underbrace{[\overbrace{\Ad_{\bar{f}}^{k_1-1}\partial_{\bar{u}^2}}^{\in\C{D_{k_1}}},\overbrace{[\Ad_{\bar{f}}^{k_1-1}\partial_{\bar{u}^1},\bar{f}]}^{\in D_{k_1}}]}_{\in D_{k_1}}+\underbrace{[\bar{f},\overbrace{[\Ad_{\bar{f}}^{k_1-1}\partial_{\bar{u}^2},\Ad_{\bar{f}}^{k_1-1}\partial_{\bar{u}^1}]}^{\in D_{k_1-1}}]}_{\in D_{k_1}}&=0\,,
				\end{aligned}
			\end{align*}
			it follows that $[\Ad_{\bar{f}}^{k_1-1}\partial_{\bar{u}^1},\Ad_{\bar{f}}^{k_1}\partial_{\bar{u}^2}]\in D_{k_1}$, which because of the involutivity of $D_{k_1-1}$ implies that $\Ad_{\bar{f}}^{k_1-1}\partial_{\bar{u}^1}\in\C{D_{k_1}}$. Since also $D_{k_1-2}\subset\C{D_{k_1}}$ (which can be shown based on the Jacobi identity), it follows that $D_{k_1-1}\subset\C{D_{k_1}}$ which would imply that $D_{k_1}=D_{k_1-1}+\Span{\Ad_{\bar{f}}^{k_1}\partial_{\bar{u}^2}}$ is involutive, contradicting with $D_{k_1}$ being non-involutive and completing the proof of the necessity of item \ref{d2:1}\\
			
			In the following we show the necessity of the remaining items. We distinguish between two cases, namely the case that the involutive distributions $\Delta_i$ in \eqref{eq:d2twoFoldProlongedDistributions} can be written in the form \eqref{eq:d2twoFoldProlongedDistributionsSimplifiedCase1}, and the case that these distributions can only be written in the form \eqref{eq:d2twoFoldProlongedDistributionsSimplifiedCase2}. As already mentioned, the form \eqref{eq:d2twoFoldProlongedDistributionsSimplifiedCase2} is only relevant if in the case $k_1=1$ we have $\Ad_{\bar{f}}^2\partial_{\bar{u}^2}\in\Span{\partial_{\bar{u}^1_2},\partial_{\bar{u}^1_1},\partial_{\bar{u}^1},\partial_{\bar{u}^2},[\bar{f},\partial_{\bar{u}^2}]}$, in all other cases, the distributions $\Delta_i$ are indeed of the form \eqref{eq:d2twoFoldProlongedDistributionsSimplifiedCase1}. We treat this special case in Appendix \ref{ap:supplements}, so that for the rest of the proof, we can assume that the involutive distributions $\Delta_i$ in \eqref{eq:d2twoFoldProlongedDistributions} can be written in the form \eqref{eq:d2twoFoldProlongedDistributionsSimplifiedCase1}.
			\paragraph*{Necessity of Item 2.} We either have $D_{k_1-1}\subset\C{D_{k_1}}$ or $D_{k_1-1}\not\subset\C{D_{k_1}}$. We have to show the necessity of item \ref{d2:2a} under the assumption that $D_{k_1-1}\subset\C{D_{k_1}}$, and the necessity of item \ref{d2:2b} under the assumption that $D_{k_1-1}\not\subset\C{D_{k_1}}$.
			\subsubsection*{\textit{The case $D_{k_1-1}\subset\C{D_{k_1}}$}} We have to show the necessity of item \ref{d2:2a} We have $D_{k_1}\subset\Delta_{k_1+2}\cap\TXU$ (see \eqref{eq:d2twoFoldProlongedDistributionsSimplifiedCase1}) and since $\Delta_{k_1+2}\cap\TXU$ is involutive, it follows that $\overline{D}_{k_1}\subset\Delta_{k_1+2}\cap\TXU$. In turn, we either have $\Dim{\overline{D}_{k_1}}=\Dim{D_{k_1}}+1$, which corresponds to item \ref{d2:2a}\ref{d2:2aA}, or $\Dim{\overline{D}_{k_1}}=\Dim{D_{k_1}}+2$, which corresponds to \ref{d2:2a}\ref{d2:2aB} We have to distinguish between these two possible subcases. In either subcase, the following result, proven in Appendix \ref{ap:lemmas}, will be useful.
			\begin{lemma}\label{lem:d2case1FirstDerivedFlag}
				In the case $d=2$ with $D_{k_1-1}\subset\C{D_{k_1}}$, we have $D^{(1)}_{k_1}=D_{k_1}+\Span{\Ad_{\bar{f}}^{k_1+1}\partial_{\bar{u}^2}}$.
			\end{lemma}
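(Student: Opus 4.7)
My plan is to prove the lemma in three steps: first reduce the computation of $D_{k_1}^{(1)}$ to a single Lie bracket, then use a Jacobi identity to rewrite that bracket in a useful form, and finally constrain the result using the involutive distributions supplied by Lemma \ref{lem:d2SimplificationOfDistributions}.

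For the reduction, I would pick the natural basis $\{\Ad_{\bar f}^j\partial_{\bar u^1},\Ad_{\bar f}^j\partial_{\bar u^2}\}_{j=0}^{k_1}$ of $D_{k_1}$, with the first $2k_1$ of these vector fields forming a basis of $D_{k_1-1}$. The hypothesis $D_{k_1-1}\subset\C{D_{k_1}}$ together with the involutivity of $D_{k_1-1}$ immediately kills every pairwise bracket among basis vectors except $[\Ad_{\bar f}^{k_1}\partial_{\bar u^1},\Ad_{\bar f}^{k_1}\partial_{\bar u^2}]$, all other brackets landing in $D_{k_1}$. Consequently $D_{k_1}^{(1)}=D_{k_1}+\Span{[\Ad_{\bar f}^{k_1}\partial_{\bar u^1},\Ad_{\bar f}^{k_1}\partial_{\bar u^2}]}$, and since $D_{k_1}$ is non-involutive the added direction is genuinely new, giving $\Dim D_{k_1}^{(1)}=\Dim D_{k_1}+1$.

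Next, I would apply the Jacobi identity to $\bar f$, $\Ad_{\bar f}^{k_1-1}\partial_{\bar u^1}$ and $\Ad_{\bar f}^{k_1}\partial_{\bar u^2}$ (using $\Ad_{\bar f}^{k_1}\partial_{\bar u^1}=[\bar f,\Ad_{\bar f}^{k_1-1}\partial_{\bar u^1}]$), which yields
\begin{equation*}
[\Ad_{\bar f}^{k_1}\partial_{\bar u^1},\Ad_{\bar f}^{k_1}\partial_{\bar u^2}]=[\bar f,[\Ad_{\bar f}^{k_1-1}\partial_{\bar u^1},\Ad_{\bar f}^{k_1}\partial_{\bar u^2}]]-[\Ad_{\bar f}^{k_1-1}\partial_{\bar u^1},\Ad_{\bar f}^{k_1+1}\partial_{\bar u^2}].
\end{equation*}
Because $\Ad_{\bar f}^{k_1-1}\partial_{\bar u^1}\in D_{k_1-1}\subset\C{D_{k_1}}$, the inner bracket in the first term lies in $D_{k_1}$, so the first term is in $[\bar f,D_{k_1}]\subset D_{k_1+1}$; the same Cauchy characteristic argument shows that the second term, being a bracket of $\Ad_{\bar f}^{k_1-1}\partial_{\bar u^1}$ with the vector field $\Ad_{\bar f}^{k_1+1}\partial_{\bar u^2}$, will be controlled once we know how this target vector field sits relative to the involutive distribution $\Delta_{k_1+1}\cap\TXU$.

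For the final step, I would exploit that $\Delta_{k_1+2}\cap\TXU$ and $\Delta_{k_1+1}\cap\TXU$ are involutive (intersections of involutive distributions with $\TXU$), and in the form \eqref{eq:d2twoFoldProlongedDistributionsSimplifiedCase1} they equal $D_{k_1}+\Span{\Ad_{\bar f}^{k_1+1}\partial_{\bar u^2},\Ad_{\bar f}^{k_1+2}\partial_{\bar u^2}}$ and $D_{k_1-1}+\Span{\Ad_{\bar f}^{k_1}\partial_{\bar u^2},\Ad_{\bar f}^{k_1+1}\partial_{\bar u^2}}$ respectively. From $D_{k_1}^{(1)}\subset\overline{D}_{k_1}\subset\Delta_{k_1+2}\cap\TXU$ the extra direction is confined to the two-dimensional slice $\Span{\Ad_{\bar f}^{k_1+1}\partial_{\bar u^2},\Ad_{\bar f}^{k_1+2}\partial_{\bar u^2}}\ \mathrm{mod}\ D_{k_1}$. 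To rule out any contribution along $\Ad_{\bar f}^{k_1+2}\partial_{\bar u^2}$, I would feed the Jacobi expansion back in: writing $[\bar f,D_{k_1}]\subset D_{k_1+1}=D_{k_1}+\Span{\Ad_{\bar f}^{k_1+1}\partial_{\bar u^1},\Ad_{\bar f}^{k_1+1}\partial_{\bar u^2}}$, both summands of the Jacobi expression involve at most one application of $\Ad_{\bar f}$ beyond $D_{k_1}$, hence cannot reach $\Ad_{\bar f}^{k_1+2}\partial_{\bar u^2}$. Combined with the involutivity of $\Delta_{k_1+1}\cap\TXU$ to absorb the $\Ad_{\bar f}^{k_1+1}\partial_{\bar u^1}$-piece (which must cancel since the remaining direction must lie in $\Delta_{k_1+2}\cap\TXU$), this forces the residue to be a multiple of $\Ad_{\bar f}^{k_1+1}\partial_{\bar u^2}$ modulo $D_{k_1}$, as claimed.

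The main obstacle I expect is precisely the last bookkeeping step: showing that the second Jacobi term $[\Ad_{\bar f}^{k_1-1}\partial_{\bar u^1},\Ad_{\bar f}^{k_1+1}\partial_{\bar u^2}]$ cannot conspire with the first to produce an $\Ad_{\bar f}^{k_1+2}\partial_{\bar u^2}$ component, and that the $\Ad_{\bar f}^{k_1+1}\partial_{\bar u^1}$ contribution from $[\bar f,D_{k_1}]$ is cancelled. A cleaner alternative, which I would fall back on if the direct Jacobi bookkeeping becomes too unwieldy, is to argue by induction on $k_1$, with the base $k_1=1$ verified directly and the induction step driven by the propagation of the Cauchy characteristic relation through one step of $\Ad_{\bar f}$.
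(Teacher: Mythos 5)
Your reduction of $D_{k_1}^{(1)}$ to the single bracket $[\Ad_{\bar f}^{k_1}\partial_{\bar u^1},\Ad_{\bar f}^{k_1}\partial_{\bar u^2}]$ and your Jacobi identity are both correct and are essentially the same devices the paper uses. The gap is exactly where you suspect it: in the term $[\bar f,[\Ad_{\bar f}^{k_1-1}\partial_{\bar u^1},\Ad_{\bar f}^{k_1}\partial_{\bar u^2}]]$ you only place the inner bracket in $D_{k_1}$, so the outer bracket is only known to lie in $D_{k_1}+\Span{\Ad_{\bar f}^{k_1+1}\partial_{\bar u^1},\Ad_{\bar f}^{k_1+1}\partial_{\bar u^2}}$. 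Your attempt to cancel the $\Ad_{\bar f}^{k_1+1}\partial_{\bar u^1}$-piece by intersecting with $\Delta_{k_1+2}\cap\TXU=D_{k_1}+\Span{\Ad_{\bar f}^{k_1+1}\partial_{\bar u^2},\Ad_{\bar f}^{k_1+2}\partial_{\bar u^2}}$ tacitly assumes that $\Ad_{\bar f}^{k_1+1}\partial_{\bar u^1}$, $\Ad_{\bar f}^{k_1+1}\partial_{\bar u^2}$ and $\Ad_{\bar f}^{k_1+2}\partial_{\bar u^2}$ are independent modulo $D_{k_1}$. Nothing you have established rules out $\Ad_{\bar f}^{k_1+1}\partial_{\bar u^1}\in\Delta_{k_1+2}\cap\TXU$ with a nonzero component along $\Ad_{\bar f}^{k_1+2}\partial_{\bar u^2}$; in that situation the two containments intersect in all of $\Delta_{k_1+2}\cap\TXU$ and your argument yields no constraint at all. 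Your fallback (induction on $k_1$) does not address this either.

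The paper closes this hole by first proving that $\mathcal{I}_{k_1}=\Delta_{k_1+1}\cap D_{k_1}=D_{k_1-1}+\Span{\Ad_{\bar f}^{k_1}\partial_{\bar u^2}}$ is involutive: if it were not, then since $D_{k_1-1}\subset\C{D_{k_1}}$ and $\mathcal{I}_{k_1}\underset{1}{\subset}D_{k_1}$, the single new direction in $\overline{\mathcal{I}}_{k_1}$ would lie in $D_{k_1}$, forcing $\overline{\mathcal{I}}_{k_1}=D_{k_1}$ and contradicting the non-involutivity of $D_{k_1}$. With this, the inner bracket $[\Ad_{\bar f}^{k_1-1}\partial_{\bar u^1},\Ad_{\bar f}^{k_1}\partial_{\bar u^2}]$ (both arguments lie in $\mathcal{I}_{k_1}$) lands in $\mathcal{I}_{k_1}$ rather than merely in $D_{k_1}$, and then $[\bar f,\mathcal{I}_{k_1}]\subset D_{k_1}+\Span{\Ad_{\bar f}^{k_1+1}\partial_{\bar u^2}}$ directly — the problematic $\Ad_{\bar f}^{k_1+1}\partial_{\bar u^1}$ direction never appears. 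If you insert this involutivity claim before your Jacobi step, your proof goes through; without it, the final bookkeeping step does not close.
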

			Let us first consider the subcase $\Dim{\overline{D}_{k_1}}=\Dim{D_{k_1}}+1$, which corresponds to item \ref{d2:2a}\ref{d2:2aA} We have to show that $\Dim{[\bar{f},D_{k_1}]+\overline{D}_{k_1}}=\Dim{\overline{D}_{k_1}}+1$. We necessarily have $\overline{D}_{k_1}\neq\TXU$, since $\overline{D}_{k_1}=\TXU$ would imply flatness with a difference of $d=1$\footnote{Indeed, for $D_{k_1-1}\subset\C{D_{k_1}}$ with $\Dim{\overline{D}_{k_1}}=\Dim{D_{k_1}}+1$ and $\overline{D}_{k_1}=\TXU$, the conditions of Theorem \ref{thm:d1} would be met.}, which contradicts with the assumption that $d=2$. Since $\overline{D}_{k_1}=D_{k_1}^{(1)}=D_{k_1}+\Span{\Ad_{\bar{f}}^{k_1+1}\partial_{\bar{u}^2}}$ (see Lemma \ref{lem:d2case1FirstDerivedFlag}), the condition holds if and only if $\Ad_{\bar{f}}^{k_1+1}\partial_{\bar{u}^1}\neq\overline{D}_{k_1}$, which can be shown by contradiction. Assume that $\Ad_{\bar{f}}^{k_1+1}\partial_{\bar{u}^1}\in\overline{D}_{k_1}$ and thus $[\bar{f},D_{k_1}]+\overline{D}_{k_1}=\overline{D}_{k_1}$. Based on the Jacobi identity, it can then be shown that this would imply $[\bar{f},\overline{D}_{k_1}]\subset\overline{D}_{k_1}$. However, since $\overline{D}_{k_1}\neq\TXU$, this would in turn imply that the system contains an autonomous subsystem, which is in contradiction with the assumption that the system is flat.\footnote{The autonomous subsystems occurs explicitly in coordinates in which the distribution $\overline{D}_{k_1}$ is straightened out (see also Theorem 3.49 in \cite{NijmeijervanderSchaft:1990}).} Thus, $\Ad_{\bar{f}}^{k_1+1}\partial_{\bar{u}^1}\notin\overline{D}_{k_1}$ and $\Dim{[\bar{f},D_{k_1}]+\overline{D}_{k_1}}=\Dim{\overline{D}_{k_1}}+1$ indeed holds. By definition, we have $E_{k_1+1}=\overline{D}_{k_1}$ in this case, which evaluates to $E_{k_1+1}=D_{k_1}+\Span{\Ad_{\bar{f}}^{k_1+1}\partial_{\bar{u}^2}}$.\\
			
			Next, let us consider the subcase $\Dim{\overline{D}_{k_1}}=\Dim{D_{k_1}}+2$, which corresponds to item \ref{d2:2a}\ref{d2:2aB} According to Lemma \ref{lem:d2case1FirstDerivedFlag}, we have $D_{k_1}^{(1)}=D_{k_1}+\Span{\Ad_{\bar{f}}^{k_1+1}\partial_{\bar{u}^2}}$, which is non-involutive since by assumption $\Dim{\overline{D}_{k_1}}=\Dim{D_{k_1}}+2$. Note that $D_{k_1}^{(1)}$ can be written in the form
			\begin{align*}
				\begin{aligned}
					D_{k_1}^{(1)}&=\underbrace{D_{k_1-1}+\Span{\Ad_{\bar{f}}^{k_1}\partial_{\bar{u}^2},\Ad_{\bar{f}}^{k_1+1}\partial_{\bar{u}^2}}}_{=\Delta_{k_1+1}\cap\TXU}+\Span{\Ad_{\bar{f}}^{k_1}\partial_{\bar{u}^1}}\,.
				\end{aligned}
			\end{align*}
			From the involutivity of $\Delta_{k_1+1}\cap\TXU$, the fact that $[\Ad_{\bar{f}}^{k_1}\partial_{\bar{u}^2},\Ad_{\bar{f}}^{k_1}\partial_{\bar{u}^1}]\in D_{k_1}^{(1)}$ and the non-involutivity of $D_{k_1}^{(1)}$, it follows that $\C{D_{k_1}^{(1)}}=D_{k_1-1}+\Span{\Ad_{\bar{f}}^{k_1}\partial_{\bar{u}^2}}$, from which the necessity of the condition $[\bar{f},\C{D_{k_1}^{(1)}}]\subset D_{k_1}^{(1)}$ follows immediately. By definition, we have $E_{k_1+1}$ in this case, which is of course non-involutive and evaluates to $E_{k_1+1}=D_{k_1}+\Span{\Ad_{\bar{f}}^{k_1+1}\partial_{\bar{u}^2}}$.
			\subsubsection*{\textit{The case $D_{k_1-1}\not\subset\C{D_{k_1}}$}} We have to show the necessity of item \ref{d2:2b}, \ie we have to show that there exists a vector field $v_c\in D_{k_1-1}$, $v_c\notin D_{k_1-2}$ such that $E_{k_1-1}\subset\C{E_{k_1}}$ where $E_{k_1-1}=D_{k_1-2}+\Span{v_c}$ and $E_{k_1}=D_{k_1-1}+\Span{[v_c,\bar{f}]}$. Let us show that the vector field $v_c=\Ad_{\bar{f}}^{k_1-1}\partial_{\bar{u}^2}$ and the distributions $E_{k_1-1}$ and $E_{k_1}$ derived from it meet these criteria. The vector field $v_c=\Ad_{\bar{f}}^{k_1-1}\partial_{\bar{u}^2}$ meets $\Ad_{\bar{f}}^{k_1-1}\partial_{\bar{u}^2}\in D_{k_1-1}$, $\Ad_{\bar{f}}^{k_1-1}\partial_{\bar{u}^2}\notin D_{k_1-2}$ (the latter must hold since $\Ad_{\bar{f}}^{k_1-1}\partial_{\bar{u}^2}\in D_{k_1-2}$ would imply $\Dim{D_{k_1-1}}<2k_1$), and yields $E_{k_1-1}=D_{k_1-2}+\Span{\Ad_{\bar{f}}^{k_1-1}\partial_{\bar{u}^2}}$ and $E_{k_1}=D_{k_1-1}+\Span{\Ad_{\bar{f}}^{k_1}\partial_{\bar{u}^2}}$. Due to the involutivity of $D_{k_1-1}$ and the involutivity of $\Delta_{k_1}\cap\TXU=D_{k_1-2}+\Span{\Ad_{\bar{f}}^{k_1-1}\partial_{\bar{u}^2},\Ad_{\bar{f}}^{k_1}\partial_{\bar{u}^2}}\subset E_{k_1}$, it follows that $[\Ad_{\bar{f}}^{k_1-1}\partial_{\bar{u}^2},E_{k_1}]\subset E_{k_1}$ and $[D_{k_1-2},E_{k_1}]\subset E_{k_1}$, and hence $E_{k_1-1}\subset\C{E_{k_1}}$. Therefore, $v_c=\Ad_{\bar{f}}^{k_1-1}\partial_{\bar{u}^2}$, $E_{k_1-1}=D_{k_1-2}+\Span{\Ad_{\bar{f}}^{k_1-1}\partial_{\bar{u}^2}}$ and $E_{k_1}=D_{k_1-1}+\Span{\Ad_{\bar{f}}^{k_1}\partial_{\bar{u}^2}}$.
			
			\paragraph*{Necessity of Item 3.} The distribution $E_{k_1}=D_{k_1-1}+\Span{\Ad_{\bar{f}}^{k_1}\partial_{\bar{u}^2}}$ of item \ref{d2:2b} can either be involutive or non-involutive. We have to show the necessity of item \ref{d2:3a} under the assumption that $E_{k_1}=D_{k_1-1}+\Span{\Ad_{\bar{f}}^{k_1}\partial_{\bar{u}^2}}$ of item \ref{d2:2b} is non-involutive, and the necessity of item \ref{d2:3b} under the assumption that $E_{k_1}$ of item \ref{d2:2b} is involutive.
			
			In item \ref{d2:2a}\ref{d2:2aA}, the distribution $E_{k_1+1}$ is by construction involutive, so we have to show the necessity of item \ref{d2:3b} in this case. (In item \ref{d2:2a}\ref{d2:2aB}, the item \textbf{3.} is not relevant as it is skipped in the corresponding conditions.)
			
			\subsubsection*{$E_{k_1}$ of item \ref{d2:2b} being non-involutive}
			We have to show the necessity of item \ref{d2:3a} From $E_{k_1}\subset\Delta_{k_1+1}\cap\TXU$ (see \eqref{eq:d2twoFoldProlongedDistributionsSimplifiedCase1}), it immediately follows that $\overline{E}_{k_1}=\Delta_{k_1+1}\cap\TXU=D_{k_1-1}+\Span{\Ad_{\bar{f}}^{k_1}\partial_{\bar{u}^2},\Ad_{\bar{f}}^{k_1+1}\partial_{\bar{u}^2}}$ and in turn the necessity of the condition $\Dim{\overline{E}_{k_1}}=\Dim{E_{k_1}}+1$, which corresponds to \ref{d2:3a}\ref{d2:3a1} follows.\footnote{Note that we necessarily have $\overline{E}_{k_1}\neq\TXU$. Indeed, we have $\Dim{E_{k_1}}=\Dim{D_{k_1}}-1$, and we have just shown that necessarily $\Dim{\overline{E}_{k_1}}=\Dim{\overline{E}_{k_1}}+1$. Thus, $\Dim{\overline{E}_{k_1}}=\Dim{D_{k_1}}$ and $\overline{E}_{k_1}=\TXU$ would thus imply $D_{k_1}=\TXU$, which is in contradiction with $D_{k_1}$ being non-involutive.}
			
			Next, let us show the necessity of item \ref{d2:3a}\ref{d2:3a2}, \ie $\Dim{[\bar{f},E_{k_1}]+\overline{E}_{k_1}}=\Dim{\overline{E}_{k_1}}+1$. Recall that we have $E_{k_1}=D_{k_1-1}+\Span{\Ad_{\bar{f}}^{k_1}\partial_{\bar{u}^2}}$ and $\overline{E}_{k_1}=D_{k_1-1}+\Span{\Ad_{\bar{f}}^{k_1}\partial_{\bar{u}^2},\Ad_{\bar{f}}^{k_1+1}\partial_{\bar{u}^2}}$. Since $\Ad_{\bar{f}}^{k_1+1}\partial_{\bar{u}^2}\in\overline{E}_{k_1}$, the condition holds if and only if $\Ad_{\bar{f}}^{k_1}\partial_{\bar{u}^1}\notin\overline{E}_{k_1}$, which can be shown by contradiction. Assume that $\Ad_{\bar{f}}^{k_1}\partial_{\bar{u}^1}\in\overline{E}_{k_1}$ and thus $[\bar{f},E_{k_1}]+\overline{E}_{k_1}=\overline{E}_{k_1}$. Based on the Jacobi identity, it can then be shown that this would imply $[\bar{f},\overline{E}_{k_1}]\subset\overline{E}_{k_1}$. However, since $\overline{E}_{k_1}\neq\TXU$, this would in turn imply that the system contains an autonomous subsystem, which is in contradiction with the assumption that the system is flat. By definition, we have $F_{k_1+1}=\overline{E}_{k_1}=E_{k_1}+\Span{\Ad_{\bar{f}}^{k_1+1}\partial_{\bar{u}^2}}$ in this case.
			
			\subsubsection*{$E_{k_1}$ of item \ref{d2:2b}, or $E_{k_1+1}$ of item \ref{d2:2a}\ref{d2:2aA} being involutive}
			In this case, the distributions $E_i=E_{i-1}+[\bar{f},E_{i-1}]$ are defined and we have to show the necessity of item \ref{d2:3b}\\
			
			Let us first show the necessity of \ref{d2:3b}\ref{d2:3b1}, \ie that there necessarily exists an index $k_2$ such that $E_{k_2}$ is non-involutive. We show the existence of $k_2$ by contradiction. Assume that all the distributions $E_i$ are involutive. If there does not exist an integer $s$ such that $E_s=\TXU$, there exists an integer $l$ such that $[\bar{f},E_l]\subset E_l$, which would imply that the system contains an autonomous subsystem and be in contradiction with the assumption that the system is flat. If all the distributions $E_i$ are involutive and there exists an integer $s$ such that $E_s=\TXU$, it can be shown that the system would meet the conditions for flatness with $d=1$, which contradicts with the assumption that $d=2$. 
			\\
			
			To show the condition on the dimensions of the distributions $E_i$, note that actually in any case, \ie independent of whether \ref{d2:2a}\ref{d2:2aA} or \ref{d2:2b} applies, we have $E_{k_1+1}=D_{k_1}+\Span{\Ad_{\bar{f}}^{k_1+1}\partial_{\bar{u}^2}}$. Indeed, in \ref{d2:2a}\ref{d2:2aA}, by definition we have $E_{k_1+1}=\overline{D}_{k_1}$, which turned out to be $E_{k_1+1}=D_{k_1}+\Span{\Ad_{\bar{f}}^{k_1+1}\partial_{\bar{u}^2}}$. In \ref{d2:2b}, we found that $E_{k_1}=D_{k_1-1}+\Span{\Ad_{\bar{f}}^{k_1}\partial_{\bar{u}^2}}$ (assumed to be involutive here) and thus again $E_{k_1+1}=D_{k_1}+\Span{\Ad_{\bar{f}}^{k_1+1}\partial_{\bar{u}^2}}$. For the distributions $E_i$, $k_1+1\leq i\leq k_2$ we thus have
			\begin{align}\label{eq:E}
				\begin{aligned}
					E_{k_1+1}&=\Span{\partial_{\bar{u}^1},\partial_{\bar{u}^2},[\bar{f},\partial_{\bar{u}^1}],[\bar{f},\partial_{\bar{u}^2}],\ldots,\Ad_{\bar{f}}^{k_1}\partial_{\bar{u}^1},\Ad_{\bar{f}}^{k_1}\partial_{\bar{u}^2},\Ad_{\bar{f}}^{k_1+1}\partial_{\bar{u}^2}}\\
					&\vdotswithin{=}\\
					E_{k_2-1}&=\Span{\partial_{\bar{u}^1},\partial_{\bar{u}^2},[\bar{f},\partial_{\bar{u}^1}],[\bar{f},\partial_{\bar{u}^2}],\ldots,\Ad_{\bar{f}}^{k_2-2}\partial_{\bar{u}^1},\Ad_{\bar{f}}^{k_2-2}\partial_{\bar{u}^2},\Ad_{\bar{f}}^{k_2-1}\partial_{\bar{u}^2}}\\
					E_{k_2}&=\Span{\partial_{\bar{u}^1},\partial_{\bar{u}^2},[\bar{f},\partial_{\bar{u}^1}],[\bar{f},\partial_{\bar{u}^2}],\ldots,\Ad_{\bar{f}}^{k_2-1}\partial_{\bar{u}^1},\Ad_{\bar{f}}^{k_2-1}\partial_{\bar{u}^2},\Ad_{\bar{f}}^{k_2}\partial_{\bar{u}^2}}
				\end{aligned}
			\end{align}
			and with the distributions \eqref{eq:d2twoFoldProlongedDistributionsSimplifiedCase1}, they are related via
			\begin{align}\label{eq:DeltaErelation}
				\begin{aligned}
					\Delta_{k_1+2}&=\Span{\partial_{\bar{u}^1_2},\partial_{\bar{u}^1_1}}+E_{k_1+1}+\Span{\Ad_{\bar{f}}^{k_1+2}\partial_{\bar{u}^2}}\\
					&\vdotswithin{=}\\
					\Delta_{k_2}&=\Span{\partial_{\bar{u}^1_2},\partial_{\bar{u}^1_1}}+E_{k_2-1}+\Span{\Ad_{\bar{f}}^{k_2}\partial_{\bar{u}^2}}\\
					\Delta_{k_2+1}&=\Span{\partial_{\bar{u}^1_2},\partial_{\bar{u}^1_1}}+E_{k_2}+\Span{\Ad_{\bar{f}}^{k_2+1}\partial_{\bar{u}^2}}\,.
				\end{aligned}
			\end{align}
			The necessity of the condition $\Dim{E_i}=2i+1$, $i=k_1+1,\ldots,k_2$ can now be shown by contradiction. Assume that $\Dim{E_i}\leq 2i$ for some $i$ where $k_1+1\leq i\leq k_2$ and let $k_1+1\leq l\leq k_2$ be the smallest integer such that $\Dim{E_l}\leq 2l$. We have
			\begin{align*}
				\begin{aligned}
					E_{l-1}&=\Span{\partial_{\bar{u}^1},\partial_{\bar{u}^2},[\bar{f},\partial_{\bar{u}^1}],[\bar{f},\partial_{\bar{u}^2}],\ldots,\Ad_{\bar{f}}^{l-2}\partial_{\bar{u}^1},\Ad_{\bar{f}}^{l-2}\partial_{\bar{u}^2},\Ad_{\bar{f}}^{l-1}\partial_{\bar{u}^2}}\\
					E_l&=\Span{\partial_{\bar{u}^1},\partial_{\bar{u}^2},[\bar{f},\partial_{\bar{u}^1}],[\bar{f},\partial_{\bar{u}^2}],\ldots,\Ad_{\bar{f}}^{l-1}\partial_{\bar{u}^1},\Ad_{\bar{f}}^{l-1}\partial_{\bar{u}^2},\Ad_{\bar{f}}^l\partial_{\bar{u}^2}}
				\end{aligned}
			\end{align*}
			and by assumption $\Dim{E_{l-1}}=2l-1$ and $\Dim{E_l}\leq 2l$. Thus, $\Ad_{\bar{f}}^{l-1}\partial_{\bar{u}^1}$ and $\Ad_{\bar{f}}^l\partial_{\bar{u}^2}$ are collinear modulo $E_{l-1}$. Both of these vector fields being contained in $E_{l-1}$, contradicts with $E_{k_2}$ being non-involutive, it would lead to $E_{k_2}=E_{l-1}$, \ie the sequence would stop growing from $E_{l-1}$ on. If $\Ad_{\bar{f}}^l\partial_{\bar{u}^2}\in E_{l-1}$, due to
			\begin{align*}
				\begin{aligned}
					\Delta_l&=\Span{\partial_{\bar{u}^1_2},\partial_{\bar{u}^1_1}}+E_{l-1}+\Span{\Ad_{\bar{f}}^l\partial_{\bar{u}^2}}
				\end{aligned}
			\end{align*}
			(see \eqref{eq:DeltaErelation}), we have $\Delta_l=\Span{\partial_{\bar{u}^1_2},\partial_{\bar{u}^1_1}}+E_{l-1}$ and it follows that also $\Delta_i=\Span{\partial_{\bar{u}^1_2},\partial_{\bar{u}^1_1}}+E_{i-1}$ for $i>l$. The involutivity of $\Delta_{k_2+1}$ would then imply that $E_{k_2}$ is involutive, which again contradicts with $E_{k_2}$ being non-involutive. 
			
			If $\Ad_{\bar{f}}\partial_{\bar{u}^2}\notin E_{l-1}$, we necessarily have $\Ad_{\bar{f}}^{l-1}\partial_{\bar{u}^1}\in E_{l-1}+\Span{\Ad_{\bar{f}}^l\partial_{\bar{u}^2}}$ and thus $E_l=E_{l-1}+\Span{\Ad_{\bar{f}}^l\partial_{\bar{u}^2}}$ and in turn $\Delta_l=\Span{\partial_{\bar{u}^1_2},\partial_{\bar{u}^1_1}}+E_l$ and it follows that also $\Delta_i=\Span{\partial_{\bar{u}^1_2},\partial_{\bar{u}^1_1}}+E_{i}$ for $i>l$. The involutivity of $\Delta_{k_2}$ would then imply that $E_{k_2}$ is involutive, which again contradicts with $E_{k_2}$ being non-involutive. Thus, $\Ad_{\bar{f}}^{l-1}\partial_{\bar{u}^1}$ and $\Ad_{\bar{f}}^l\partial_{\bar{u}^2}$ cannot be collinear modulo $E_{l-1}$ for any $k_1+1\leq l\leq k_2$ which shows that $\Dim{E_i}=2i+1$ for $i=k_1+1,\ldots,k_2$.\footnote{In case of \ref{d2:2a}\ref{d2:2aA}, there does not explicitly occur a distribution $E_{k_1}$, so for $l=k_1+1$, we cannot argue as above. However, in this case we have $E_{k_1+1}=\overline{D}_{k_1}$ and therefore, $\Dim{E_{k_1+1}}=2(k_1+1)+1$ follows immediately from $\Dim{D_{k_1}}=2(k_1+1)$ and $\Dim{\overline{D}_{k_1}}=\Dim{D_{k_1}}+1$.}
			
			\paragraph*{Necessity of Item 4.} For the distributions $E_{k_2-1}$ and $E_{k_2}$ of item \ref{d2:3b}, we either have $E_{k_2-1}\subset\C{E_{k_2}}$ or $E_{k_2-1}\not\subset\C{E_{k_2}}$. We have to show the necessity of item \ref{d2:4a} under the assumption that $E_{k_2-1}\subset\C{E_{k_2}}$, and the necessity of item \ref{d2:4b} under the assumption that $E_{k_2-1}\not\subset\C{E_{k_2}}$. Furthermore, under the assumption that the conditions of \ref{d2:2a}\ref{d2:2aB} are met, we have to show the necessity of item \ref{d2:4a}\ref{d2:4a2}
			\subsubsection*{\textit{The case $E_{k_2-1}\subset\C{E_{k_2}}$.}} We have to show the necessity of item \ref{d2:4a} 
			The distribution $E_{k_2}$ is by assumption non-involutive. Since $E_{k_2}\subset\Delta_{k_2+1}\cap\TXU$ (see \eqref{eq:DeltaErelation}) and $\Delta_{k_2+1}\cap\TXU$ is involutive, it follows that $\overline{E}_{k_2}=\Delta_{k_2+1}\cap\TXU=E_{k_2}+\Span{\Ad_{\bar{f}}^{k_2+1}\partial_{\bar{u}^2}}$, and thus we have $\Dim{\overline{E}_{k_2}}=\Dim{E_{k_2}}+1$, which shows the necessity \ref{d2:4a}\ref{d2:4a1}
			
			For $\overline{E}_{k_2}=\TXU$, there are no additional conditions whose necessity has to be shown. For $\overline{E}_{k_2}\neq\TXU$, the necessity of $\Dim{[\bar{f},E_{k_2}]+\overline{E}_{k_2}}=\Dim{\overline{E}_{k_2}}+1$ has to be shown. Since $\Ad_{\bar{f}}^{k_2+1}\partial_{\bar{u}^2}\in\overline{E}_{k_2}$, the condition holds if and only if $\Ad_{\bar{f}}^{k_2}\partial_{\bar{u}^1}\notin\overline{E}_{k_2}$, which can be shown by contradiction. Assume that $\Ad_{\bar{f}}^{k_2}\partial_{\bar{u}^1}\in\overline{E}_{k_2}$ and thus $[\bar{f},E_{k_2}]+\overline{E}_{k_2}=\overline{E}_{k_2}$. Based on the Jacobi identity, it can then be shown that this would imply $[\bar{f},\overline{E}_{k_2}]\subset\overline{E}_{k_2}$, which would in turn imply that the system contains an autonomous subsystem and contradict with the system being flat. By definition, we have $F_{k_2+1}=\overline{E}_{k_2}=E_{k_2}+\Span{\Ad_{\bar{f}}^{k_2+1}\partial_{\bar{u}^2}}$ in this case.\\
			
			The necessity of item \ref{d2:4a}\ref{d2:4a2} under the assumption that the conditions of \ref{d2:2a}\ref{d2:2aB} are met can be shown analogously. By definition, we have $E_{k_2}=D_{k_1}^{(1)}$ (where $k_2=k_1+1$), which according to Lemma \ref{lem:d2case1FirstDerivedFlag} evaluates to $E_{k_2}=D_{k_1}+\Span{\Ad_{\bar{f}}^{k_1+1}\partial_{\bar{u}^2}}$ and is by assumption non-involutive. We have $E_{k_2}\subset\Delta_{k_1+2}\cap\TXU$ (see \eqref{eq:d2twoFoldProlongedDistributionsSimplifiedCase1}), from which $\overline{E}_{k_2}=\Delta_{k_1+2}\cap\TXU=E_{k_2}+\Span{\Ad_{\bar{f}}^{k_1+2}\partial_{\bar{u}^2}}$ follows. For $\overline{E}_{k_2}=\TXU$, there are again no additional conditions whose necessity has to be shown. For $\overline{E}_{k_2}\neq\TXU$, the necessity of $\Dim{[\bar{f},E_{k_2}]+\overline{E}_{k_2}}=\Dim{\overline{E}_{k_2}}+1$ can be shown analogously as above. By definition, we have $F_{k_2+1}=\overline{E}_{k_2}=E_{k_2}+\Span{\Ad_{\bar{f}}^{k_2+1}\partial_{\bar{u}^2}}$ in this case.\\
			\subsubsection*{\textit{The case $E_{k_2-1}\not\subset\C{E_{k_2}}$.}} We have to show the necessity of item \ref{d2:4b}, \ie we have to show that the distribution $F_{k_2}$, defined as $F_{k_2}=E_{k_2-1}+\C{E_{k_2}}$, is involutive. Regarding the Cauchy characteristic distribution of $E_{k_2}$ we have the following result,  a proof of which is provided in Appendix \ref{ap:lemmas}.
			\begin{lemma}\label{lem:cuachyEk2}
				Assume that $E_{k_2-1}\not\subset\C{E_{k_2}}$. If $k_2\geq k_1+2$, the Cauchy characteristic distribution of $E_{k_2}$ is given by
				\begin{align*}
					\begin{aligned}
						\C{E_{k_2}}&=E_{k_2-2}+\Span{\Ad_{\bar{f}}^{k_2-1}\partial_{\bar{u}^2},\Ad_{\bar{f}}^{k_2}\partial_{\bar{u}^2}-\lambda\Ad_{\bar{f}}^{k_2-2}\partial_{\bar{u}^1}}\,,
					\end{aligned}
				\end{align*}
				if $k_2=k_1+1$, it is given by
				\begin{align*}
					\begin{aligned}
						\C{E_{k_2}}&=\Delta_{k_1}\cap D_{k_1-1}+\Span{\Ad_{\bar{f}}^{k_2-1}\partial_{\bar{u}^2},\Ad_{\bar{f}}^{k_2}\partial_{\bar{u}^2}-\lambda\Ad_{\bar{f}}^{k_2-2}\partial_{\bar{u}^1}}\,.
					\end{aligned}
				\end{align*}
			\end{lemma}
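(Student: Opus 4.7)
The plan is to work throughout with the explicit form $E_{k_2}=D_{k_2-1}+\Span{\Ad_{\bar{f}}^{k_2}\partial_{\bar{u}^2}}$ from \eqref{eq:E} together with the auxiliary involutive distributions $\Delta_{k_2}\cap\TXU=E_{k_2-1}+\Span{\Ad_{\bar{f}}^{k_2}\partial_{\bar{u}^2}}$ and $\Delta_{k_2+1}\cap\TXU=E_{k_2}+\Span{\Ad_{\bar{f}}^{k_2+1}\partial_{\bar{u}^2}}$ provided by \eqref{eq:DeltaErelation}. My strategy has two halves. First, I will check the Cauchy condition $[c,E_{k_2}]\subset E_{k_2}$ on each of the three families of generators appearing on the right-hand side of the claimed identity. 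Second, I will establish a matching upper bound on $\Dim{\C{E_{k_2}}}$ from the hypothesis $E_{k_2-1}\not\subset\C{E_{k_2}}$ together with the non-involutivity of $E_{k_2}$.

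The easy inclusions are those of $E_{k_2-2}$ (respectively $\Delta_{k_1}\cap D_{k_1-1}$ in the base case $k_2=k_1+1$, where one verifies that $\Delta_{k_1}\cap D_{k_1-1}=D_{k_1-2}+\Span{\Ad_{\bar{f}}^{k_1-1}\partial_{\bar{u}^2}}$ plays an entirely analogous structural role) and of $\Ad_{\bar{f}}^{k_2-1}\partial_{\bar{u}^2}$. For $c\in E_{k_2-2}$, I would handle $[c,E_{k_2-1}]$ by involutivity of $E_{k_2-1}$, and I would rewrite $[c,\Ad_{\bar{f}}^{k_2}\partial_{\bar{u}^2}]$ via the Jacobi identity, using $[\bar{f},c]\in E_{k_2-1}$, into brackets already known to lie in $E_{k_2}$. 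For $\Ad_{\bar{f}}^{k_2-1}\partial_{\bar{u}^2}$, all brackets with generators of $D_{k_2-1}$ stay in $D_{k_2-1}$ by involutivity, and the bracket with $\Ad_{\bar{f}}^{k_2}\partial_{\bar{u}^2}$ lies in the involutive $\Delta_{k_2}\cap\TXU\subset E_{k_2}$.

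The main obstacle is the identification of the scalar function $\lambda$ and the verification that $v:=\Ad_{\bar{f}}^{k_2}\partial_{\bar{u}^2}-\lambda\,\Ad_{\bar{f}}^{k_2-2}\partial_{\bar{u}^1}$ is Cauchy characteristic. Since the remaining generators of $E_{k_2}$ lie in distributions already controlled, the only potentially violating bracket is $[v,\Ad_{\bar{f}}^{k_2-1}\partial_{\bar{u}^1}]$. I would write $\Ad_{\bar{f}}^{k_2}\partial_{\bar{u}^2}=[\bar{f},\Ad_{\bar{f}}^{k_2-1}\partial_{\bar{u}^2}]$ and apply the Jacobi identity to reduce this obstruction, modulo $E_{k_2}$, to a multiple of $\Ad_{\bar{f}}^{k_2}\partial_{\bar{u}^1}$. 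The involutivity of $\Delta_{k_2+1}\cap\TXU$, combined with $\Ad_{\bar{f}}^{k_2-1}\partial_{\bar{u}^1}\in E_{k_2}$, yields $\Ad_{\bar{f}}^{k_2}\partial_{\bar{u}^1}\equiv\mu\,\Ad_{\bar{f}}^{k_2+1}\partial_{\bar{u}^2}\Mod E_{k_2}$ for a uniquely determined function $\mu$, and $\mu$ is nonzero by the non-involutivity of $E_{k_2}$. An entirely analogous Jacobi computation expresses $[\Ad_{\bar{f}}^{k_2-2}\partial_{\bar{u}^1},\Ad_{\bar{f}}^{k_2-1}\partial_{\bar{u}^1}]$ modulo $E_{k_2}$ as a second explicit multiple of $\Ad_{\bar{f}}^{k_2+1}\partial_{\bar{u}^2}$, and $\lambda$ is pinned down as the ratio of the two coefficients. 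To close, I note that the claimed distribution has dimension $2k_2-1$ and therefore codimension two in $E_{k_2}$; the matching upper bound follows because the hypothesis $E_{k_2-1}\not\subset\C{E_{k_2}}$ forbids one direction of $E_{k_2-1}$ modulo $E_{k_2-2}$ from being Cauchy characteristic, while the non-involutivity of $E_{k_2}$ forces a second exclusion.
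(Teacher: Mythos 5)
Your overall architecture matches the paper's: verify the Cauchy condition generator by generator, then use the non-involutivity of $E_{k_2}$ to cap $\C{E_{k_2}}$ at corank two. The treatment of $E_{k_2-2}$ (resp.\ $\Delta_{k_1}\cap D_{k_1-1}$) is essentially the paper's. However, your justification that brackets of $\Ad_{\bar{f}}^{k_2-1}\partial_{\bar{u}^2}$ with the generators of $D_{k_2-1}$ ``stay in $D_{k_2-1}$ by involutivity'' is not available: for $k_2=k_1+1$ one has $D_{k_2-1}=D_{k_1}$, which is non-involutive by the very definition of $k_1$. The paper instead proves $[\Ad_{\bar{f}}^{k_2-1}\partial_{\bar{u}^2},\Ad_{\bar{f}}^{k_2-1}\partial_{\bar{u}^1}]\in E_{k_2}$ by a Jacobi identity using only the involutivity of $E_{k_2-1}$ and of $\Delta_{k_2}\cap\TXU$ together with $[\bar{f},E_{k_2-1}]\subset E_{k_2}$.

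The genuine gap is in the construction of $v=\Ad_{\bar{f}}^{k_2}\partial_{\bar{u}^2}-\lambda\Ad_{\bar{f}}^{k_2-2}\partial_{\bar{u}^1}$. Your route hinges on the congruence $\Ad_{\bar{f}}^{k_2}\partial_{\bar{u}^1}\equiv\mu\,\Ad_{\bar{f}}^{k_2+1}\partial_{\bar{u}^2}\Mod E_{k_2}$, i.e.\ on $\Ad_{\bar{f}}^{k_2}\partial_{\bar{u}^1}\in\Delta_{k_2+1}\cap\TXU=\overline{E}_{k_2}$. This does not follow from the involutivity of $\Delta_{k_2+1}\cap\TXU$ (which controls brackets of its own sections, not brackets with $\bar{f}$), and it fails whenever $\Delta_{k_2+2}$ still exceeds $\Delta_{k_2+1}$ by two, since $\Ad_{\bar{f}}^{k_2}\partial_{\bar{u}^1}$ is precisely one of the two new directions. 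Likewise, ``$\mu\neq 0$ by non-involutivity of $E_{k_2}$'' is unsubstantiated: $\Ad_{\bar{f}}^{k_2}\partial_{\bar{u}^1}$ is a bracket with $\bar{f}\notin E_{k_2}$ and carries no information about the involutivity of $E_{k_2}$. The paper's mechanism is different and simpler: every bracket of two generators of $E_{k_2}$ lies in $\overline{E}_{k_2}=E_{k_2}+\Span{\Ad_{\bar{f}}^{k_2+1}\partial_{\bar{u}^2}}$, which exceeds $E_{k_2}$ by only one dimension, so $[\Ad_{\bar{f}}^{k_2}\partial_{\bar{u}^2},\Ad_{\bar{f}}^{k_2-1}\partial_{\bar{u}^1}]$ and $[\Ad_{\bar{f}}^{k_2-2}\partial_{\bar{u}^1},\Ad_{\bar{f}}^{k_2-1}\partial_{\bar{u}^1}]$ are automatically collinear modulo $E_{k_2}$. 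Moreover, the hypothesis $E_{k_2-1}\not\subset\C{E_{k_2}}$, which your sketch invokes only for the final dimension count, is exactly what makes $\lambda$ well defined: since $\Ad_{\bar{f}}^{k_2-2}\partial_{\bar{u}^1}$ lies in the involutive corank-one subdistribution $\Delta_{k_2}\cap\TXU$ of $E_{k_2}$, the only bracket that can move it out of $E_{k_2}$ is the one with $\Ad_{\bar{f}}^{k_2-1}\partial_{\bar{u}^1}$, so the hypothesis forces $[\Ad_{\bar{f}}^{k_2-2}\partial_{\bar{u}^1},\Ad_{\bar{f}}^{k_2-1}\partial_{\bar{u}^1}]\notin E_{k_2}$. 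Without that input, your ``ratio of the two coefficients'' may be $0/0$.
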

			Recall that we have $E_{k_2-1}=\Span{\partial_{\bar{u}^1},\partial_{\bar{u}^2},\ldots,\Ad_{\bar{f}}^{k_2-2}\partial_{\bar{u}^1},\Ad_{\bar{f}}^{k_2-2}\partial_{\bar{u}^2},\Ad_{\bar{f}}^{k_2-1}\partial_{\bar{u}^2}}$ and that $D_{k_1-1}\subset E_{k_1}$. Due to Lemma \ref{lem:cuachyEk2}, for the distribution $F_{k_2}=E_{k_2-1}+\C{E_{k_2}}$, we thus in any case have $F_{k_2}=E_{k_2-1}+\Span{\Ad_{\bar{f}}^{k_2}\partial_{\bar{u}^2}}$ and hence $F_{k_2}=\Delta_{k_2}\cap\TXU$, implying that $F_{k_2}$ is indeed involutive.
			
			\paragraph*{Necessity of Item \ref{d2:5}} In conclusion, in \ref{d2:3a}, \ie for $E_{k_1}$ being non-involutive (in which case we define $k_2=k_1$), we have $F_{k_1+1}=\overline{E}_{k_1}=E_{k_1}+\Span{\Ad_{\bar{f}}^{k_1+1}\partial_{\bar{u}^2}}$. In \ref{d2:4a}, we have $F_{k_2+1}=\overline{E}_{k_2}=E_{k_2}+\Span{\Ad_{\bar{f}}^{k_2+1}\partial_{\bar{u}^2}}$, and in \ref{d2:4b}, we have $F_{k_2}=E_{k_2-1}+\Span{\Ad_{\bar{f}}^{k_2}\partial_{\bar{u}^2}}$. With $F_i=F_{i-1}+[\bar{f},F_{i-1}]$, in any case we thus have $F_{k_2+1}=E_{k_2}+\Span{\Ad_{\bar{f}}^{k_2+1}\partial_{\bar{u}^2}}$. To complete the necessity part of the proof, we have to show that all the distributions $F_i$, $i\geq k_2+1$ are involutive and that there exists an integer $s$ such that $F_s=\TXU$, which is indeed the case since in any case, it follows that the distributions $F_i$ and $\Delta_i$ are related via $\Delta_i=\Span{\partial_{\bar{u}^1_2},\partial_{\bar{u}^1_1}}+F_i$ and thus
			\begin{align*}
				F_{k_2+1}&=\Delta_{k_2+1}\cap\TXU\\
				F_{k_2+2}&=\Delta_{k_2+2}\cap\TXU\\
				&\vdotswithin{=}\\
				F_s&=\Delta_s\cap\TXU=\TXU\,.
			\end{align*}
		\subsection*{Sufficiency}
			Consider a two-input system of the form \eqref{eq:sys} and assume that it meets the conditions of Theorem \ref{thm:d2}. To cover all the possible paths which are illustrated in Figure \ref{fig:d2}, we again have to distinguish between several cases. The distinction is done based on the difference of the integers $k_1$ and $k_2$, \ie the indices of the first non-involutive distribution $D_{k_1}$ and the second non-involutive distribution $E_{k_2}$. The cases in which $k_2\geq k_1+2$, and thus $E_{k_1+1}$ is involutive, are similar an can be proven together. The cases in which $E_{k_1}$ is involutive but $E_{k_1+1}$ is non-involutive, are also similar and can be proven together. The remaining case in which $E_{k_1}$ is non-involutive (in which case we define $k_2=k_1$), is proven separately. For each case, a coordinate transformation such that the system takes a certain structurally flat triangular form can be derived. For the cases $k_2=k_1+1$ and $k_2=k_1$ we derive such a transformation explicitly, the case $k_2\geq k_1+2$ can be handled analogously, it is in fact slightly simpler than the other two cases and we do not treat this case in detail here. In each case, we will need the following result, proven in Appendix \ref{ap:lemmas}.
			\begin{lemma}\label{lem:missingDistributions}
				Let $G_{k-1}$ be an involutive distribution and $G_k$ a non-involutive distribution on $\XU$ such that $G_{k-1}\underset{2}{\subset}G_k$, $G_{k-1}\subset\C{G_k}$, $\Dim{\overline{G}_k}=\Dim{G_k}+1$ and for some vector field $f$ on $\XU$, we have $G_k=G_{k-1}+[f,G_{k-1}]$ and either $\overline{G}_k=\TXU$ or $\Dim{[f,G_k]+\overline{G}_k}=\Dim{\overline{G}_k}+1$. Then, there exists an involutive distribution $H_k$ such that $G_{k-1}\underset{1}{\subset}H_k\underset{1}{\subset}G_k$ and $H_k+[f,H_k]=\overline{G}_k$.
			\end{lemma}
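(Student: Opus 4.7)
My plan is to construct $H_k$ as a rank-one enlargement $G_{k-1}+\Span{v}$ of $G_{k-1}$ by a carefully chosen vector field $v\in G_k\setminus G_{k-1}$. I first fix $w_1,w_2\in G_{k-1}$ such that $G_k=G_{k-1}+\Span{[f,w_1],[f,w_2]}$; this is possible because $G_k=G_{k-1}+[f,G_{k-1}]$ and the quotient $G_k/G_{k-1}$ is two-dimensional. Since $G_{k-1}\subset\C{G_k}$, all Lie brackets of $G_{k-1}$ with $G_k$ stay inside $G_k$, so the only obstruction to involutivity of $G_k$ is the single class $\xi:=[[f,w_1],[f,w_2]]\Mod G_k$, and hence $\overline{G}_k=G_k+\Span{\xi}$.

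Next, I look for functions $\alpha,\beta$ such that $v:=\alpha[f,w_1]+\beta[f,w_2]$ satisfies $[f,v]\in\overline{G}_k\setminus G_k$. A direct computation yields $[f,v]\equiv\alpha[f,[f,w_1]]+\beta[f,[f,w_2]]\Mod G_k$. If $\overline{G}_k=\TXU$ this reduces to the generic requirement $[f,v]\notin G_k$. Otherwise the hypothesis $\Dim{[f,G_k]+\overline{G}_k}=\Dim{\overline{G}_k}+1$ forces the three residues $\xi,[f,[f,w_1]],[f,[f,w_2]]$ modulo $G_k$ to span a two-dimensional subspace of $\TXU/G_k$, so that a nontrivial pair $(\alpha,\beta)$ exists making $[f,v]\equiv c\,\xi\Mod G_k$ with $c\neq 0$. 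Setting $H_k:=G_{k-1}+\Span{v}$, the desired equality
\begin{align*}
H_k+[f,H_k]=G_{k-1}+[f,G_{k-1}]+\Span{v,[f,v]}=G_k+\Span{[f,v]}=\overline{G}_k
\end{align*}
follows immediately, and the inclusions $G_{k-1}\underset{1}{\subset}H_k\underset{1}{\subset}G_k$ hold by construction.

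The main obstacle is to verify that $H_k$ is involutive, which, because $G_{k-1}$ is already involutive, reduces to establishing $[G_{k-1},v]\subset H_k$. The Cauchy characteristic condition alone only yields $[G_{k-1},v]\subset G_k$, so one must rule out a transversal component modulo $\Span{v}$ in the two-dimensional quotient $G_k/G_{k-1}$. My approach is to apply the Jacobi identity to the triples $(c,f,w_i)$ for $c\in G_{k-1}$, giving $[c,[f,w_i]]=[f,[c,w_i]]-[w_i,[c,f]]$, and then to combine this with the involutivity of $G_{k-1}$ and of $\overline{G}_k$ in order to track $[c,v]\Mod G_{k-1}$. The crucial point I would need to make precise is that the same algebraic relation selecting $(\alpha,\beta)$, namely that $[f,v]$ be directed along $\xi$ modulo $G_k$, simultaneously forces $v\Mod G_{k-1}$ to be a common eigendirection for the induced $G_{k-1}$-action $c\mapsto[c,\cdot]\Mod G_{k-1}$ on $G_k/G_{k-1}$; once established, this yields $[c,v]\equiv\lambda(c)\,v\Mod G_{k-1}$ for every $c\in G_{k-1}$ and thus completes the proof.
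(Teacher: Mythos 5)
Your construction of $v$ and the final computation $H_k+[f,H_k]=\overline{G}_k$ are fine in outline, but there are two genuine gaps, and the second is the heart of the lemma. First, the assertion that the selected pair $(\alpha,\beta)$ yields $[f,v]\equiv c\,\xi\Mod G_k$ with $c\neq 0$ does not follow from your dimension count: the hypothesis $\Dim{[f,G_k]+\overline{G}_k}=\Dim{\overline{G}_k}+1$ only says that $[f,[f,w_1]]$ and $[f,[f,w_2]]$ are collinear modulo $\overline{G}_k$, which is perfectly compatible with the distinguished combination lying in $G_k$ (\ie $c=0$): the three residues can span a two-dimensional space with $\Span{[f,[f,w_1]],[f,[f,w_2]]}\Mod G_k$ a line transversal to $\Span{\xi}$. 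Ruling this out needs a separate argument; the paper does it by contradiction, showing that $[f,v]\in G_k$ would, via the Jacobi identity applied to $(w,f,v)$ for $w\in G_{k-1}$ together with $G_{k-1}\subset\C{G_k}$ and $G_k=G_{k-1}+[f,G_{k-1}]$, give $[v,[w,f]]\in G_k$ for all $w\in G_{k-1}$ and hence force $G_k$ to be involutive.

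Second, and more seriously, you do not prove that $H_k$ is involutive; you explicitly defer the "common eigendirection" claim, and I do not see how to close it along the lines you sketch, since $\alpha,\beta$ are determined by a condition on $[f,v]$ and nothing in your argument controls their derivatives along $G_{k-1}$, which is exactly what $[c,v]\in H_k$ requires. The paper's mechanism is where the hypotheses $G_{k-1}\underset{2}{\subset}G_k$ and $G_{k-1}\subset\C{G_k}$ really enter: since $G_k$ is non-involutive they force $G_{k-1}=\C{G_k}$, so after straightening $G_{k-1}$ and $\overline{G}_k$ simultaneously (coordinates $(z_3,z_2,z_1)$ with $G_{k-1}=\Span{\partial_{z_1}}$ and $\overline{G}_k=\Span{\partial_{z_1},\partial_{z_2}}$) one can choose $v_1,v_2$ completing $G_{k-1}$ to $G_k$ whose components and arguments involve only $(z_3,z_2)$. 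The collinearity function $\alpha$ with $[v_2,f_3]=\alpha[v_1,f_3]\Mod\overline{G}_k$ then automatically depends only on $(z_3,z_2)$, so $v=v_2-\alpha v_1$ commutes with $\partial_{z_1}$ and $H_k=G_{k-1}+\Span{v}$ is involutive by inspection (similarly for $v_c=(\D\psi\rfloor v_2)v_1-(\D\psi\rfloor v_1)v_2$ in the case $\overline{G}_k=\TXU$, where $H_k$ is not unique). Without this normalization, or an equivalent invariant substitute, your proof is incomplete.
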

			\paragraph*{\textit{The Case $k_2\geq k_1+2$.}} In total, four different subcases are possible, namely
			\begin{enumerate}[label=(\alph*)]
				\item \ref{d2:2a}\ref{d2:2aA} followed by \ref{d2:4a}, corresponding to $D_{k_1-1}\subset\C{D_{k_1}}$ and $E_{k_2-1}\subset\C{E_{k_2}}$.
				\item \ref{d2:2a}\ref{d2:2aA} followed by \ref{d2:4b}, corresponding to $D_{k_1-1}\subset\C{D_{k_1}}$ and $E_{k_2-1}\not\subset\C{E_{k_2}}$.
				\item \ref{d2:2b} followed by \ref{d2:4a}, corresponding to $D_{k_1-1}\not\subset\C{D_{k_1}}$ and $E_{k_2-1}\subset\C{E_{k_2}}$. 
				\item \ref{d2:2b} followed by \ref{d2:4b}, corresponding to $D_{k_1-1}\not\subset\C{D_{k_1}}$ and $E_{k_2-1}\not\subset\C{E_{k_2}}$.
			\end{enumerate}
			In any of these cases, we have the following sequence of involutive distributions
			\begin{align}\label{eq:d2_k2geqk1+2_invSequence}
				\begin{aligned}
					&D_0\underset{2}{\subset}\ldots\underset{2}{\subset}D_{k_1-1}\underset{1}{\subset}E_{k_1}\underset{2}{\subset}E_{k_1+1}=\overline{D}_{k_1}\underset{2}{\subset}\ldots\underset{2}{\subset}E_{k_2-1}\underset{1}{\subset}F_{k_2}\underset{2}{\subset}F_{k_2+1}=\overline{E}_{k_2}\subset\ldots\subset E_s\,.
				\end{aligned}
			\end{align}
			In the cases in which $D_{k_1-1}\subset\C{D_{k_1}}$ and/or $E_{k_2-1}\subset\C{E_{k_2}}$, the existence of $E_{k_1}$ and/or $F_{k_2}$ is guaranteed by Lemma \ref{lem:missingDistributions}. That the inclusion $F_{k_2}\underset{2}{\subset}F_{k_2+1}$ is indeed of corank $2$ follows from $F_{k_2}\underset{1}{\subset}E_{k_2}\underset{1}{\subset}\overline{E}_{k_2}=F_{k_2+1}$ in this case. 
			
			In case of $D_{k_1-1}\not\subset\C{D_{k_1}}$, the distribution $E_{k_1}$ occurs explicitly in the corresponding conditions of Theorem \ref{thm:d2}, $E_{k_1+1}=\overline{D}_{k_1}$ follows from the assumption that $E_{k_1+1}$ is involutive. Indeed, we have $E_{k_1+1}=E_{k_1}+[f,E_{k_1}]=D_{k_1}+\Span{[f, [v_c,f]]}$, from which $\overline{D}_{k_1}=E_{k_1+1}$ follows. Similarly, in case of $E_{k_2-1}\not\subset\C{E_{k_2}}$, $F_{k_2+1}=\overline{E}_{k_2}$ can be shown as follows. By assumption, $F_{k_2}=E_{k_2-1}+\C{E_{k_2}}$ and $F_{k_2+1}=F_{k_2}+[f,F_{k_2}]$ are involutive and $E_{k_2}$ is non-involutive. It is immediate, that $\C{E_{k_2}}$ necessarily contains a vector field $v_c\in E_{k_2}$, $v_c\notin E_{k_2-1}$, otherwise, $F_{k_2}=E_{k_2-1}$ which would lead to $F_{k_2+1}=E_{k_2}$ and contradict with $E_{k_2}$ being non-involutive. On the other hand, $\C{E_{k_2}}$ can only contain one vector field which is not already contained in $E_{k_2-1}$, otherwise, $F_{k_2}=E_{k_2}$ which again contradicts with $E_{k_2}$ being non-involutive. Thus, $F_{k_2}=E_{k_2-1}+\Span{v_c}$ where $v_c\in E_{k_2}$, $v_c\notin E_{k_2-1}$. Therefore, we have $F_{k_2+1}=E_{k_2}+\Span{[f,v_c]}$. It is immediate that $[f,v_c]\notin E_{k_2}$ (otherwise $F_{k_2+1}=E_{k_2}$). Thus, we have $\overline{E}_{k_2}=F_{k_2+1}$. That $F_{k_2}\underset{2}{\subset}F_{k_2+1}$ follows readily from these considerations in this case.\\
			
			Based on the distributions \eqref{eq:d2_k2geqk1+2_invSequence}, a coordinate transformation such that the system takes the structurally flat triangular form
			\begin{align*}
				\begin{aligned}
					\dot{\bar{x}}_s&=f_s(\bar{x}_s,\bar{x}_{s-1})\\
					&\vdotswithin{=}\\
					\dot{\bar{x}}_{k_2+2}&=f_{k_2+2}(\bar{x}_s,\ldots,\bar{x}_{k_2+1})\\
					\dot{\bar{x}}_{k_2+1}&=\tilde{f}_{k_2+1}(\bar{x}_s,\ldots,\bar{x}_{k_2}^1,\tilde{x}_{k_2-1}^2)\\
					\dot{\bar{x}}_{k_2}^1&=\tilde{f}_{k_2}^1(\bar{x}_s,\ldots,\tilde{x}_{k_2-1})\\
					\dot{\tilde{x}}_{k_2-1}&=\tilde{f}_{k_2-1}(\bar{x}_s,\ldots,\bar{x}_{k_2-2})\\
					&\vdotswithin{=}\\
					\dot{\bar{x}}_{k_1+2}&=\tilde{f}_{k_1+2}(\bar{x}_s,\ldots,\bar{x}_{k_1+1})\\
					\dot{\bar{x}}_{k_1+1}&=\tilde{f}_{k_1+1}(\bar{x}_s,\ldots,\bar{x}_{k_1}^1,\tilde{x}_{k_1-1}^2)\\
					\dot{\bar{x}}_{k_1}^1&=\tilde{f}_{k_1}^1(\bar{x}_s,\ldots,\tilde{x}_{k_1-1})\\
					\dot{\tilde{x}}_{k_1-1}&=\tilde{f}_{k_1-1}(\bar{x}_s,\ldots,\bar{x}_{k_1-2})\\
					&\vdotswithin{=}\\
					\dot{\bar{x}}_2&=\tilde{f}_2(\bar{x}_s,\ldots,\bar{x}_1)\\
					\dot{\bar{x}}_1&=\tilde{f}_1(\bar{x}_s,\ldots,\bar{x}_1,u)\,,
				\end{aligned}
			\end{align*}
			can be derived (in case of $k_1=1$, the variables $\tilde{x}_{k_1-1}$ are actually inputs instead of states). It follows that $y=\bar{x}_s$ forms a flat output with a difference of $d=2$. If $\Dim{\bar{x}_s}=1$, we necessarily have $\Dim{\bar{x}_l}=2$ but $\Dim{f_{l+1}}=1$ for some $l\in\{k_2+1,\ldots,s-1\}$. In this case, a flat output with a difference of $d=2$ is given by $y=(\bar{x}_s,\varphi)$ where $\varphi=\varphi(\bar{x}_s,\ldots,\bar{x}_l)$ is an arbitrary function such that $\Rank{\partial_{\bar{x}_l}(f_{l+1},\varphi)}=2$. Furthermore, it follows that there cannot exist a flat output with $d\leq 1$. The conditions of Theorem \ref{thm:sfl} cannot be met due to the non-involutivity of $D_{k_1}$ and it can easily be shown that the conditions of Theorem \ref{thm:d1} cannot be met either.
			\paragraph*{\textit{The Case $k_2=k_1+1$.}} In total, three different subcases are possible, namely
			\begin{enumerate}[label=(\alph*)]
				\item \ref{d2:2a}\ref{d2:2aB} followed by \ref{d2:4a}\ref{d2:4a2}, corresponding to $D_{k_1-1}\subset\C{D_{k_1}}$.\label{it:a}
				\item \ref{d2:2b} followed by \ref{d2:4a}, corresponding to $D_{k_1-1}\not\subset\C{D_{k_1}}$ and $E_{k_1}\subset\C{E_{k_1+1}}$.\label{it:b}
				\item \ref{d2:2b} followed by \ref{d2:4b}, corresponding to $D_{k_1-1}\not\subset\C{D_{k_1}}$ and $E_{k_1}\not\subset\C{E_{k_1+1}}$.\label{it:c}
			\end{enumerate}
			In all of these cases, we have the following sequence of involutive distributions
			\begin{align}\label{eq:d2_k2=k1+1_invSequence}
				\begin{aligned}
					D_0\underset{2}{\subset}\ldots\underset{2}{\subset}D_{k_1-1}\underset{1}{\subset}E_{k_1}\underset{1}{\subset}F_{k_1+1}\underset{2}{\subset}F_{k_1+2}=\overline{E}_{k_1+1}\subset F_{k_1+3}\subset\ldots\subset F_s\,.
				\end{aligned}
			\end{align}
			Let us show this in detail for the three possible cases \ref{it:a} - \ref{it:c}.\\
			
			\noindent
			\ref{it:a} In \ref{d2:2a}\ref{d2:2aB}, only $E_{k_1+1}$ is defined explicitly. We set $E_{k_1}=\C{D_{k_1}^{(1)}}$ in this case. We have the following results on this Cauchy characteristic distribution, see Appendix \ref{ap:lemmas} for a proof.
			\begin{lemma}\label{lem:cauchy}
				Assume that the conditions of item \ref{d2:2a}\ref{d2:2aB} are met, \ie $D_{k_1-1}\subset\C{D_{k_1}}$, $\Dim{\overline{D}_{k_1}}=\Dim{D_{k_1}}+2$ and $[f,\C{D_{k_1}^{(1)}}]\subset D_{k_1}^{(1)}$. Then, $D_{k_1-1}\underset{1}{\subset}\C{D_{k_1}^{(1)}}\underset{1}{\subset}D_{k_1}$ and $\C{D_{k_1}^{(1)}}+[f,\C{D_{k_1}^{(1)}}]=D_{k_1}^{(1)}$.
			\end{lemma}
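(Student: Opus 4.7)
The approach is to use the strong Cauchy characteristic hypothesis $D_{k_1-1}\subset\C{D_{k_1}}$ to get very explicit descriptions of $D_{k_1}^{(1)}$ and $\C{D_{k_1}^{(1)}}$, and then pin down the behaviour of $[f,\C{D_{k_1}^{(1)}}]$. Choose any pair $v_1,v_2$ of vector fields that complete $D_{k_1-1}$ to $D_{k_1}$. Since $D_{k_1-1}\subset\C{D_{k_1}}$, the Jacobi identity shows that any bracket of two elements of $D_{k_1}$ having at least one factor in $D_{k_1-1}$ lies in $D_{k_1}$; consequently $D_{k_1}^{(1)}=D_{k_1}+\Span{[v_1,v_2]}$, and non-involutivity of $D_{k_1}$ forces $[v_1,v_2]\notin D_{k_1}$, giving $\Dim{D_{k_1}^{(1)}}=\Dim{D_{k_1}}+1$.

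\paragraph*{The inclusion chain.} A second Jacobi computation gives $[d,[v_1,v_2]]=[[d,v_1],v_2]+[v_1,[d,v_2]]$ for $d\in D_{k_1-1}$, a sum of two brackets inside $D_{k_1}$ and thus in $D_{k_1}^{(1)}$; combined with $[d,D_{k_1}]\subset D_{k_1}$ this yields $D_{k_1-1}\subset\C{D_{k_1}^{(1)}}$. To locate the remaining characteristic directions, write a candidate as $v_c=v_1+\alpha v_2\Mod D_{k_1-1}$; the requirement $[v_c,[v_1,v_2]]\in D_{k_1}^{(1)}$ reduces, modulo $D_{k_1}^{(1)}$, to a single linear condition on the function $\alpha$. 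If this condition were trivially satisfied, $D_{k_1}^{(1)}$ would already be involutive, contradicting $\Dim{\overline{D}_{k_1}}=\Dim{D_{k_1}}+2>\Dim{D_{k_1}^{(1)}}$; hence it determines $\alpha$ uniquely and gives a single extra direction $v_c$ with $\C{D_{k_1}^{(1)}}\cap D_{k_1}=D_{k_1-1}+\Span{v_c}$. Moreover, $\C{D_{k_1}^{(1)}}$ cannot contain any vector outside $D_{k_1}$, for such a vector would produce $\Dim{\C{D_{k_1}^{(1)}}}=\Dim{D_{k_1}^{(1)}}$ and again force involutivity. Together this establishes $D_{k_1-1}\underset{1}{\subset}\C{D_{k_1}^{(1)}}\underset{1}{\subset}D_{k_1}$.

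\paragraph*{The equality $\C{D_{k_1}^{(1)}}+[f,\C{D_{k_1}^{(1)}}]=D_{k_1}^{(1)}$.} The inclusion ``$\subset$'' is immediate from the hypothesis $[f,\C{D_{k_1}^{(1)}}]\subset D_{k_1}^{(1)}$. For ``$\supset$'', note that $\C{D_{k_1}^{(1)}}\supset D_{k_1-1}$ and $D_{k_1-1}+[f,D_{k_1-1}]=D_{k_1}$, so the left hand side already contains all of $D_{k_1}$; the only remaining task is to verify that $[f,v_c]\notin D_{k_1}$, since this one extra direction then fills out $D_{k_1}^{(1)}$.

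\paragraph*{The main obstacle.} Ruling out $[f,v_c]\in D_{k_1}$ is the delicate part of the proof. I would argue by contradiction. Suppose $[f,v_c]\in D_{k_1}$. Then $H:=D_{k_1-1}+\Span{v_c}$ is an involutive codimension-one subdistribution of $D_{k_1}$ with $H+[f,H]=D_{k_1}$. Writing $D_{k_1}=H+\Span{v_e}$ and using $v_c\in\C{D_{k_1}^{(1)}}$, the derived flag computation from the first paragraph takes the form $D_{k_1}^{(1)}=D_{k_1}+\Span{[v_c,v_e]}$. Expanding $[f,v_c]$ and $[f,v_e]$ modulo $D_{k_1}$ shows that, under the contradiction hypothesis, the quotient $D_{k_1+1}/D_{k_1}$ collapses to at most one dimension along $[f,v_e]$; feeding this back into the Jacobi identity applied to the triple $(f,v_c,v_e)$ together with the hypothesis $[f,\C{D_{k_1}^{(1)}}]\subset D_{k_1}^{(1)}$ would then either force $[v_c,v_e]\in D_{k_1}$ (making $D_{k_1}$ involutive) or force $[v_e,[v_c,v_e]]\in D_{k_1}^{(1)}$ (making $D_{k_1}^{(1)}$ involutive, so that $\Dim{\overline{D}_{k_1}}\le\Dim{D_{k_1}}+1$). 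Either conclusion contradicts the hypotheses, completing the proof.
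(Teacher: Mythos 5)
Your construction of $\C{D_{k_1}^{(1)}}$ follows the paper's proof almost verbatim: the same explicit basis $D_{k_1}^{(1)}=D_{k_1}+\Span{[v_1,v_2]}$, the same Jacobi argument for $D_{k_1-1}\subset\C{D_{k_1}^{(1)}}$, the same single linear condition singling out one extra characteristic direction, and the same corank argument capping $\C{D_{k_1}^{(1)}}$ at $D_{k_1-1}+\Span{v_c}$. (Two small slips there: the ansatz $v_c=v_1+\alpha v_2$ excludes the admissible direction $v_c=v_2$, so you should normalize the other way round or argue projectively; and one additional characteristic vector field would give $\Dim{\C{D_{k_1}^{(1)}}}=\Dim{D_{k_1}^{(1)}}-1$, not $\Dim{D_{k_1}^{(1)}}$ --- corank one still forces involutivity, so the conclusion stands.)

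The genuine gap is exactly where you flag ``the main obstacle'': ruling out $[f,v_c]\in D_{k_1}$. Your claimed dichotomy --- that the Jacobi identity on the triple $(f,v_c,v_e)$ together with $D_{k_1+1}=D_{k_1}+\Span{[f,v_e]}$ forces either $[v_c,v_e]\in D_{k_1}$ or $[v_e,[v_c,v_e]]\in D_{k_1}^{(1)}$ --- is asserted, not derived, and I do not see how it could follow: that Jacobi identity only relates $[f,[v_c,v_e]]$, $[v_c,[v_e,f]]$ and $[v_e,[f,v_c]]$, and says nothing about $[v_e,[v_c,v_e]]$. The paper closes this step with a different and much shorter computation: assuming $[f,v_c]\in D_{k_1}$, apply the Jacobi identity to $(w,f,v_c)$ for every $w\in D_{k_1-1}$. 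The term $[w,[f,v_c]]$ lies in $D_{k_1}$ because $w\in\C{D_{k_1}}$ and $[f,v_c]\in D_{k_1}$, and the term $[f,[v_c,w]]$ lies in $D_{k_1}$ because $[v_c,w]\in\C{D_{k_1}^{(1)}}=D_{k_1-1}+\Span{v_c}$ (involutivity of the Cauchy characteristic distribution) and, under the contradiction hypothesis, $[f,D_{k_1-1}+\Span{v_c}]\subset D_{k_1}$. Hence $[v_c,[w,f]]\in D_{k_1}$ for all $w\in D_{k_1-1}$; since $D_{k_1}=D_{k_1-1}+[f,D_{k_1-1}]$ this gives $v_c\in\C{D_{k_1}}$, so $\C{D_{k_1}}$ would have corank one in $D_{k_1}$ and $D_{k_1}$ would be involutive --- a contradiction. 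You should replace your final paragraph with an argument of this kind, or else supply the missing derivation of your dichotomy.
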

			By definition, we have $E_{k_1+1}=D_{k_1}^{(1)}$ and according to Lemma \ref{lem:cauchy}, we have $E_{k_1+1}=E_{k_1}+[f,E_{k_1}]$ with $E_{k_1}=\C{D_{k_1}^{(1)}}$. Therefore, Lemma \ref{lem:missingDistributions} applies, which guarantees the existence of a distribution $F_{k_1+1}$ such that $E_{k_1}\underset{1}{\subset}F_{k_1+1}\underset{1}{\subset}E_{k_1+1}$ and $F_{k_1+1}+[f,F_{k_1+1}]=\overline{E}_{k_1+1}$ (it turns out that $F_{k_1+1}$ is unique if and only if $\overline{D}_{k_1}\neq\TXU$).\\
			
			\noindent
			\ref{it:b} In \ref{d2:2b}, the distribution $E_{k_1}$ occurs explicitly, the existence of an involutive distribution $F_{k_1+1}$ such that $E_{k_1}\underset{1}{\subset}F_{k_1+1}\underset{1}{\subset}E_{k_1+1}$ and $F_{k_1+1}+[f,F_{k_1+1}]=\overline{E}_{k_1+1}$ is guaranteed by Lemma \ref{lem:missingDistributions} (it turns out that $F_{k_1+1}$ is unique if and only if $\overline{E}_{k_1+1}\neq\TXU$).\\
						
			\noindent
			\ref{it:c} In \ref{d2:2b} and \ref{d2:4b}, the distributions $E_{k_1}$ and $F_{k_1+1}$ occur explicitly. That $\Dim{F_{k_1+1}}=\Dim{E_{k_1}}+1$ as well as $F_{k_1+2}=\overline{E}_{k_1+1}$ and $\Dim{F_{k_1+2}}=\Dim{F_{k_1+1}}+2$ can be shown the same way as above for the case $k_2\geq k_1+2$.\\
			
			Based on the distributions \eqref{eq:d2_k2=k1+1_invSequence} we will below derive a coordinate transformation such that the system takes the structurally flat triangular form
			\begin{align}\label{eq:d2sysTriangularCase2}
				\begin{aligned}
					\dot{\bar{x}}_s&=f_s(\bar{x}_s,\bar{x}_{s-1})\\
					&\vdotswithin{=}\\
					\dot{\bar{x}}_{k_1+3}&=f_{k_1+3}(\bar{x}_s,\ldots,\bar{x}_{k_1+2})\\
					\dot{\bar{x}}_{k_1+2}&=\tilde{f}_{k_1+2}(\bar{x}_s,\ldots,\bar{x}_{k_1+1}^1,\tilde{x}_{k_1-1}^2)\\
					\dot{\bar{x}}_{k_1+1}^1&=\tilde{f}_{k_1+1}^1(\bar{x}_s,\ldots,\bar{x}_{k_1}^1,\tilde{x}_{k_1-1}^2)\\
					\dot{\bar{x}}_{k_1}^1&=\tilde{f}_{k_1}^1(\bar{x}_s,\ldots,\tilde{x}_{k_1-1})\\
					\dot{\tilde{x}}_{k_1-1}&=\tilde{f}_{k_1-1}(\bar{x}_s,\ldots,\bar{x}_{k_1-2})\\
					&\vdotswithin{=}\\
					\dot{\bar{x}}_2&=\tilde{f}_2(\bar{x}_s,\ldots,\bar{x}_1)\\
					\dot{\bar{x}}_1&=\tilde{f}_1(\bar{x}_s,\ldots,\bar{x}_1,u)\,,
				\end{aligned}
			\end{align}
			(in case of $k_1=1$, the variables $\tilde{x}_{k_1-1}$ are actually inputs instead of states) from which again flatness with a difference of $d=2$ can be deduced.
			\paragraph*{\textit{The Case $k_2=k_1$.}} In this case, by assumption the conditions of item \ref{d2:2b} and item \ref{d2:3a} are met.
			We have the following sequence of involutive distributions (we assume $k_1\geq 2$ here, see Remark \ref{rem:k1} below for the cases $k_1=1$)
			\begin{align}\label{eq:d2_k2=k1_invSequence}
				\begin{aligned}
					D_0\underset{2}{\subset}\ldots\underset{2}{\subset}D_{k_1-2}\underset{1}{\subset}E_{k_1-1}\underset{1}{\subset}F_{k_1}\underset{2}{\subset}F_{k_1+1}=\overline{E}_{k_1}\subset F_{k_1+2}\subset\ldots\subset F_s\,.
				\end{aligned}
			\end{align}
			That $E_{k_1-1}$ is involutive follows from the fact that $E_{k_1-1}=\C{E_{k_1}}$ in this case. (Indeed, since $E_{k_1}$ is non-involutive, we necessarily have $\Dim{\C{E_{k_1}}}\leq\Dim{E_{k_1}}-2$. By construction we have $E_{k_1-1}\subset\C{E_{k_1}}$ and $E_{k_1-1}\underset{2}{\subset}E_{k_1}$, which thus implies $E_{k_1-1}=\C{E_{k_1}}$.) The existence of $F_{k_1}$ such that $E_{k_1-1}\underset{1}{\subset}F_{k_1}\underset{1}{\subset}E_{k_1}$ and $F_{k_1}+[f,F_{k_1}]=\overline{E}_{k_1}$ is guaranteed by Lemma \ref{lem:missingDistributions} (and $F_{k_1}$ is unique since $\overline{E}_{k_1}=\TXU$ is not possible as it would imply $D_{k_1}=\TXU$).
			Based on this sequence, we will derive a coordinate transformation such that the system takes the structurally flat triangular form
			\begin{align}\label{eq:d2sysTriangularCase3}
				\begin{aligned}
					\dot{\bar{x}}_s&=f_s(\bar{x}_s,\bar{x}_{s-1})\\
					&\vdotswithin{=}\\
					\dot{\bar{x}}_{k_1+2}&=f_{k_1+2}(\bar{x}_s,\ldots,\bar{x}_{k_1+1})\\
					\dot{\bar{x}}_{k_1+1}&=\tilde{f}_{k_1+1}(\bar{x}_s,\ldots,\bar{x}_{k_1}^1,\tilde{x}_{k_1-2}^2)\\
					\dot{\bar{x}}_{k_1}^1&=\tilde{f}_{k_1}^1(\bar{x}_s,\ldots,\bar{x}_{k_1-1}^1,\tilde{x}_{k_1-2}^2)\\
					\dot{\bar{x}}_{k_1-1}^1&=\tilde{f}_{k_1-1}^1(\bar{x}_s,\ldots,\tilde{x}_{k_1-2})\\
					\dot{\tilde{x}}_{k_1-2}&=\tilde{f}_{k_1-2}(\bar{x}_s,\ldots,\bar{x}_{k_1-3})\\
					&\vdotswithin{=}\\
					\dot{\bar{x}}_2&=\tilde{f}_2(\bar{x}_s,\ldots,\bar{x}_1)\\
					\dot{\bar{x}}_1&=\tilde{f}_1(\bar{x}_s,\ldots,\bar{x}_1,u)\,,
				\end{aligned}
			\end{align}
			(in case of $k_1=2$, the variables $\tilde{x}_{k_1-2}$ are actually inputs instead of states) from which again flatness with a difference of $d=2$ can be deduced.
			\paragraph*{\textit{The Case $k_2=k_1+1$ in Detail.}} 
			Apply a change of coordinates $\bar{x}=\Phi_x(x)$ such that all the distributions \eqref{eq:d2_k2=k1+1_invSequence} get straightened out simultaneously, \ie such that
			\begin{align*}
				\begin{aligned}
					D_i&=\Span{\partial_{\bar{u}},\partial_{\bar{x}_1},\ldots,\partial_{\bar{x}_i}}\,,&i&=0,\ldots,k_1-1\,,
				\end{aligned}
			\end{align*}
			where $\Dim{\bar{x}_i}=2$, and
			\begin{align*}
				\begin{aligned}
					E_{k_1}&=\Span{\partial_{\bar{u}},\partial_{\bar{x}_1},\ldots,\partial_{\bar{x}_{k_1-1}},\partial_{\bar{x}_{k_1}^1}}\,,
				\end{aligned}
			\end{align*}
			where $\Dim{\bar{x}_{k_1}^1}=1$, and 
			\begin{align*}
				\begin{aligned}
					F_i&=\Span{\partial_{\bar{u}},\partial_{\bar{x}_1},\ldots,\partial_{\bar{x}_{k_1-1}},\partial_{\bar{x}_{k_1}^1},\partial_{\bar{x}_{k_1+1}^1},\partial_{\bar{x}_{k_1+2}},\ldots,\partial_{\bar{x}_i}}\,,&i&=k_1+1,\ldots,s\,,
				\end{aligned}
			\end{align*}
			where $\Dim{\bar{x}_{k_1+1}^1}=1$ and $\Dim{\bar{x}_{k_1+2}}=2$. Since by construction, we have $[f,D_i]\subset D_{i+1}$ for $i=0,\ldots,k_1-2$ as well as $[f,F_i]\subset F_{i+1}$ for $i=k_1+1,\ldots,s-1$, it follows that in the new coordinates, the vector field $f$ has the triangular form
			\begin{align*}
				\begin{aligned}
					f&=\begin{bmatrix}
					f_s(\bar{x}_s,\bar{x}_{s-1})\\
					\vdots\\
					f_{k_1+3}(\bar{x}_s,\ldots,\bar{x}_{k_1+2})\\
					f_{k_1+2}(\bar{x}_s,\ldots,\bar{x}_{k_1-1})\\
					f_{k_1+1}^1(\bar{x}_s,\ldots,\bar{x}_{k_1-1})\\
					f_{k_1}^1(\bar{x}_s,\ldots,\bar{x}_{k_1-1})\\
					f_{k_1-1}(\bar{x}_s,\ldots,\bar{x}_{k_1-2})\\
					\vdots\\
					f_2(\bar{x}_s,\ldots,\bar{x}_1)\\
					f_1(\bar{x}_s,\ldots,\bar{x}_1,u)
					\end{bmatrix}\,.
				\end{aligned}
			\end{align*}
			From $D_{i+1}=D_i+[f,D_i]$, $i=0,\ldots,k_1-2$ and $F_{i+1}=F_i+[f,F_i]$, $i=k_1+1,\ldots,s-1$, it follows that the rank conditions $\Rank{\partial_uf_1}=\Dim{f_1}=2$, $\Rank{\partial_{\bar{x}_i}f_{i+1}}=\Dim{f_{i+1}}=2$ for $i=1,\ldots,k_1-2$ and $\Rank{\partial_{\bar{x}_i}f_{i+1}}=\Dim{f_{i+1}}$ for $i=k_1+2,\ldots,s-1$, hold.\\
			
			Since $D_{k_1-1}\underset{2}{\subset}D_{k_1}$, it follows that $\Rank{\partial_{\bar{x}_{k_1-1}}(f_{k_1+2},f_{k_1+1}^1,f_{k_1}^1)}=2$. The non-involutivity of $D_{k_1}$ implies that at least one of the components of $f_{k_1+2}$ must explicitly depend on $\bar{x}_{k_1-1}$ (otherwise, $D_{k_1}$ would be involutive as it would be straightened out in the $\bar{x}$ coordinates). Without loss of generality, we can thus assume that $f_{k_1+2}^2$ explicitly depends on $\bar{x}_{k_1-1}^2$, if not, permute $\bar{x}_{k_1-1}^1$ and $\bar{x}_{k_1-1}^2$ and/or $f_{k_1+2}^1$ and $f_{k_1+2}^2$. We can thus introduce $\tilde{x}_{k_1-1}^2=f_{k_1+2}^2$ with all the other coordinates left unchanged (in case of $k_1=1$ this is actually an input transformation instead of a state transformation), resulting in
			\begin{align*}
				\begin{aligned}
					\tilde{f}_{k_1+2}&=\begin{bmatrix}
						\tilde{x}_{k_1-1}^2\\
						\tilde{f}_{k_1+2}^1(\bar{x}_s,\ldots,\tilde{x}_{k_1-1})
					\end{bmatrix}\,,
				\end{aligned}
			\end{align*}
			(where $\tilde{x}_{k_1-1}=(\tilde{x}_{k_1-1}^2,\bar{x}_{k_1-1}^1)$). In any case, we have $E_{k_1}\underset{1}{\subset}D_{k_1}$.\footnote{Indeed, in \ref{d2:2b}, this follows immediately from the definition of $E_{k_1}$, for the case \ref{d2:2a}\ref{d2:2aB}, it follows from Lemma \ref{lem:cauchy}.} Thus, a linear combination of $[\partial_{\bar{x}_{k_1-1}^1},f]$ and $[\partial_{\tilde{x}_{k_1-1}^2},f]$ is contained in $E_{k_1}$, implying that $\Rank{\partial_{\tilde{x}_{k_1-1}}(\tilde{f}_{k_1+2},\tilde{f}_{k_1+1}^1)}=1$ and thus, $\tilde{f}_{k_1+2}^1$ and $\tilde{f}_{k_1+1}^1$ are actually independent of $\bar{x}_{k_1-1}^1$, so we have
			\begin{align*}
				\begin{aligned}
					\tilde{f}_{k_1+2}&=\begin{bmatrix}
						\tilde{x}_{k_1-1}^2\\
						\tilde{f}_{k_1+2}^1(\bar{x}_s,\ldots,\bar{x}_{k_1}^1,\tilde{x}_{k_1-1}^2)
					\end{bmatrix}
				\end{aligned}
			\end{align*}
			and
			\begin{align*}
				\begin{aligned}
					\tilde{f}_{k_1+1}^1&=\tilde{f}_{k_1+1}^1(\bar{x}_s,\ldots,\bar{x}_{k_1}^1,\tilde{x}_{k_1-1}^2)\,.
				\end{aligned}
			\end{align*}
			Furthermore, we have $E_{k_1}\underset{2}{\subset}E_{k_1+1}$ and thus $\Rank{\partial_{(\bar{x}_{k_1}^1,\tilde{x}_{k_1-1}^2)}(\tilde{f}_{k_1+2},\tilde{f}_{k_1+1}^1)}=2$ and since $F_{k_1+1}\underset{1}{\subset}E_{k_1+1}$, a linear combination of $[\partial_{\bar{x}_{k_1-1}^2},f]$ and $[\partial_{\tilde{x}_{k_1}^1},f]$ is contained in $F_{k_1+1}$, implying that $\Rank{\partial_{(\bar{x}_{k_1}^1,\tilde{x}_{k_1-1}^2)}\tilde{f}_{k_1+2}}=1$ and thus, $\tilde{f}_{k_1+2}^1$ is actually independent of $\bar{x}_{k_1}^1$, so we have
			\begin{align*}
				\begin{aligned}
					\tilde{f}_{k_1+2}&=\begin{bmatrix}
					\tilde{x}_{k_1-1}^2\\
					\tilde{f}_{k_1+2}^1(\bar{x}_s,\ldots,\bar{x}_{k_1+1}^1,\tilde{x}_{k_1-1}^2)
					\end{bmatrix}
				\end{aligned}
			\end{align*}
			and $\tilde{f}_{k_1+1}^1$ must explicitly depend on $\bar{x}_{k_1}^1$. Because of $F_{k_1+1}\underset{2}{\subset}F_{k_1+2}$, we have $\Rank{\partial_{(\tilde{x}_{k_1-1}^2,\bar{x}_{k_1+1}^1)}\tilde{f}_{k_1+2}}=2$, from which it follows that $\tilde{f}_{k_1+2}^1$ explicitly depends on $\bar{x}_{k_1+1}^1$. (The non-involutivity of $E_{k_1+1}$ furthermore implies that $\partial_{\tilde{x}_{k_1-1}^2}^2\tilde{f}_{k_1+2}^1\neq 0$ and/or $\partial_{\bar{x}_{k_1+1}^1}\partial_{\tilde{x}_{k_1-1}^2}\tilde{f}_{k_1+2}^1\neq 0$.) So in the just constructed coordinates, the system equations indeed take the triangular structure \eqref{eq:d2sysTriangularCase2}.
			\begin{remark}
				In case \ref{it:a}, \ie if $D_{k_1-1}\subset\C{D_{k_1}}$, by successively introducing new coordinates in \eqref{eq:d2sysTriangularCase2} from top to bottom, the system would take the triangular from proposed in \cite{GstottnerKolarSchoberl:2020a}.
			\end{remark}
			\paragraph*{\textit{The Case $k_2=k_1$ in Detail.}}			
			Apply a change of coordinates $\bar{x}=\Phi_x(x)$ such that all the distributions \eqref{eq:d2_k2=k1_invSequence} get straightened out simultaneously, \ie such that
			\begin{align*}
				\begin{aligned}
					D_i&=\Span{\partial_{\bar{u}},\partial_{\bar{x}_1},\ldots,\partial_{\bar{x}_i}}\,,&i&=0,\ldots,k_1-2\,,
				\end{aligned}
			\end{align*}
			where $\Dim{\bar{x}_i}=2$, and
			\begin{align*}
				\begin{aligned}
					E_{k_1-1}&=\Span{\partial_{\bar{u}},\partial_{\bar{x}_1},\ldots,\partial_{\bar{x}_{k_1-2}},\partial_{\bar{x}_{k_1-1}^1}}\,,
				\end{aligned}
			\end{align*}
			where $\Dim{\bar{x}_{k_1-1}^1}=1$, and 
			\begin{align*}
				\begin{aligned}
					F_i&=\Span{\partial_{\bar{u}},\partial_{\bar{x}_1},\ldots,\partial_{\bar{x}_{k_1-2}},\partial_{\bar{x}_{k_1-1}^1},\partial_{\bar{x}_{k_1}^1},\partial_{\bar{x}_{k_1+1}},\ldots,\partial_{\bar{x}_i}}\,,&i&=k_1,\ldots,s\,,
				\end{aligned}
			\end{align*}
			where $\Dim{\bar{x}_{k_1}^1}=1$ and $\Dim{\bar{x}_{k_1+1}}=2$. Since by construction, we have $[f,D_i]\subset D_{i+1}$ for $i=0,\ldots,k_1-3$ as well as $[f,F_i]\subset F_{i+1}$ for $i=k_1,\ldots,s-1$, it follows that in the new coordinates, the vector field $f$ has the triangular form 
			\begin{align*}
				\begin{aligned}
					f&=\begin{bmatrix}
					f_s(\bar{x}_s,\bar{x}_{s-1})\\
					\vdots\\
					f_{k_1+2}(\bar{x}_s,\ldots,\bar{x}_{k_1+1})\\
					f_{k_1+1}(\bar{x}_s,\ldots,\bar{x}_{k_1-2})\\
					f_{k_1}^1(\bar{x}_s,\ldots,\bar{x}_{k_1-2})\\
					f_{k_1-1}^1(\bar{x}_s,\ldots,\bar{x}_{k_1-2})\\
					f_{k_1-2}(\bar{x}_s,\ldots,\bar{x}_{k_1-3})\\
					\vdots\\
					f_2(\bar{x}_s,\ldots,\bar{x}_1)\\
					f_1(\bar{x}_s,\ldots,\bar{x}_1,u)
					\end{bmatrix}\,.
				\end{aligned}
			\end{align*}
			From $D_{i+1}=D_i+[f,D_i]$, $i=0,\ldots,k_1-3$ and $F_{i+1}=F_i+[f,F_i]$, $i=k_1,\ldots,s-1$, it follows that the rank conditions $\Rank{\partial_uf_1}=\Dim{f_1}=2$, $\Rank{\partial_{\bar{x}_i}f_{i+1}}=\Dim{f_{i+1}}=2$ for $i=1,\ldots,k_1-3$ and $\Rank{\partial_{\bar{x}_i}f_{i+1}}=\Dim{f_{i+1}}$ for $i=k_1+1,\ldots,s-1$, hold.\\
			
			It follows that at least one component of $f_{k_1+1}$ explicitly depends on $\bar{x}_{k_1-2}$. Indeed, we have $D_{k_1-2}\underset{2}{\subset}D_{k_1-1}$. If $f_{k_1+1}$ would be independent of $\bar{x}_{k_1-2}$, we would obtain $D_{k_1-1}=D_{k_1-2}+\Span{\partial_{\bar{x}_{k_1-1}^1},\partial_{\bar{x}_{k_1}^1}}=F_{k_1}$ and in turn $D_{k_1}=F_{k_1+1}$, which contradicts with $D_{k_1}$ being non-involutive. Without loss of generality, we can thus assume that $f_{k_1+1}^2$ explicitly depends on $\bar{x}_{k_1-2}^2$, if not, permute $\bar{x}_{k_1-2}^1$ and $\bar{x}_{k_1-2}^2$ and/or $f_{k_1+1}^1$ and $f_{k_1+1}^2$. We can thus introduce $\tilde{x}_{k_1-2}^2=f_{k_1+1}^2$ with all the other coordinates left unchanged (in case of $k_1=2$ this is actually an input transformation instead of a state transformation), resulting in
			\begin{align*}
				\begin{aligned}
					\tilde{f}_{k_1+1}&=\begin{bmatrix}
						\tilde{x}_{k_1-2}^2\\
						\tilde{f}_{k_1+1}^1(\bar{x}_s,\ldots,\tilde{x}_{k_1-2})
					\end{bmatrix}
				\end{aligned}
			\end{align*}
			(where $\tilde{x}_{k_1-2}=(\tilde{x}_{k_1-2}^2,\bar{x}_{k_1-2}^1)$). Since $F_{k_1}\underset{1}{\subset}E_{k_1}$, it follows that $[f,E_{k_1-1}]$ yields exactly one new direction which is not already contained in $F_{k_1}$ and thus, $\Rank{\partial_{(\bar{x}_{k_1-1}^1,\tilde{x}_{k_1-2}^2,\bar{x}_{k_1-2}^1)}\tilde{f}_{k_1+1}}=1$ and therefore, $\tilde{f}_{k_1+1}^1$ is actually independent of $\bar{x}_{k_1-2}^1$ and $\bar{x}_{k_1-1}^1$, \ie we actually have
			\begin{align*}
				\begin{aligned}
					\tilde{f}_{k_1+1}&=\begin{bmatrix}
					\tilde{x}_{k_1-2}^2\\
					\tilde{f}_{k_1+1}^1(\bar{x}_s,\ldots,\bar{x}_{k_1}^1,\tilde{x}_{k_1-2}^2)
					\end{bmatrix}\,.
				\end{aligned}
			\end{align*}
			Since $F_{k_1}\underset{2}{\subset}F_{k_1+1}$, it follows that $f_{k_1+1}^1$ explicitly depends on $\bar{x}_{k_1}^1$. Similarly, we have $E_{k_1-1}\underset{1}{\subset}D_{k_1-1}$, from which it follows that $[f,D_{k_1-2}]$ yields exactly one new direction which is not already contained in $E_{k_1-1}$ and thus, $\Rank{\partial_{(\tilde{x}_{k_1-2}^2,\bar{x}_{k_1-2}^1)}(\tilde{f}_{k_1+1},\tilde{f}_{k_1})}=1$ and therefore, $\tilde{f}_{k_1}^1$ is actually independent of $\bar{x}_{k_1-2}^1$, \ie we actually have
			\begin{align*}
				\begin{aligned}
					\tilde{f}_{k_1}^1&=\tilde{f}_{k_1}^1(\bar{x}_s,\ldots,\bar{x}_{k_1-1}^1,\tilde{x}_{k_1-2}^2)\,.
				\end{aligned}
			\end{align*}
			Since $E_{k_1-1}\underset{2}{\subset}E_{k_1}$, it follows that $\tilde{f}_{k_1}^1$ explicitly depends on $\bar{x}_{k_1-1}^1$. So in the just constructed coordinates, the system equations indeed take the triangular structure \eqref{eq:d2sysTriangularCase3}. (Note that the involutivity of $D_{k_1-1}$ implies that the functions $\tilde{f}_{k_1+1}^1$ and $\tilde{f}_{k_1}^1$ depend on $\tilde{x}_{k_1-2}^2$ in an affine manner and that furthermore $\partial_{\bar{x}_{k_1-1}^1}\partial_{\tilde{x}_{k_1-2}^2}\tilde{f}_{k_1}^1=0$. The non-involutivity of $E_{k_1}$ implies that $\partial_{\bar{x}_{k_1}^1}\partial_{\tilde{x}_{k_1-2}^2}\tilde{f}_{k_1+1}^1\neq 0$.)
			\begin{remark}
				By successively introducing new coordinates in \eqref{eq:d2sysTriangularCase3} from top to bottom, the system would take the triangular from proposed in \cite{GstottnerKolarSchoberl:2020c}.
			\end{remark}
			\begin{remark}\label{rem:k1}
				In case of $k_1=1$, the variables $\bar{x}_{k_1-1}$ would correspond to inputs of the system and the variables $\bar{x}_{k_1-2}$ would not exist. Consider the system obtained by one-fold prolonging both of its inputs, \ie
				\begin{align*}
					\begin{aligned}
						\dot{x}&=f(x,u)&&&\dot{u}^1&=u^1_1&&&\dot{u}^2&=u^2_1
					\end{aligned}
				\end{align*}
				with the state $(x,u^1,u^2)$ and the input $(u^1_1,u^2_1)$. By assumption, the original system meets the conditions of Theorem \ref{thm:d2} with $k_1=k_2=1$. It can easily be shown that this implies that the prolonged system also meets the conditions of Theorem \ref{thm:d2}, but with $k_1=k_2=2$. (The distributions involved in the conditions of Theorem \ref{thm:d2} when applying it to the original system and when applying it to the prolonged system differ only by $\Span{\partial_{u^1_1},\partial_{u^2_1}}$.) The prolonged system can thus be transformed into the corresponding triangular form \eqref{eq:d2sysTriangularCase3} as explained above, \ie the prolonged system can be proven to be flat with a difference of $d=2$. It can be shown that the prolonged system is flat with a certain difference $d$ if and only if the original system is flat with the same difference $d$. In fact, a flat output of the original system with a certain difference $d$ is also a flat output of the prolonged system with the same difference $d$ and vice versa. The prolonged system being flat with a difference of $d=2$ thus implies that the original system is flat with a difference of $d=2$.
			\end{remark}
	\section{Brief Sketch of the Proof of Theorem \ref{thm:d1}}\label{se:proofd1}
		\subsection*{Necessity}
			Consider a two-input system of the form \eqref{eq:sys} and assume that it is flat with a difference of $d=1$. According to Theorem \ref{thm:linearizationByProlongations}, there exists an input transformation $\bar{u}=\Phi_u(x,u)$ with inverse $u=\hat{\Phi}(x,\bar{u})$ such that the system obtained by one-fold prolonging the new input $\bar{u}^1$, \ie the system
			\begin{align}\label{eq:d1oneFoldProlonged}
				\begin{aligned}
					\dot{x}&=f(x,\hat{\Phi}_u(x,\bar{u}))=\bar{f}(x,\bar{u})&&&
					\dot{\bar{u}}^1&=\bar{u}^1_1
				\end{aligned}
			\end{align}
			with the state $(x,\bar{u}^1)$ and the input $(\bar{u}^1_1,\bar{u}^2)$, is static feedback linearizable. The necessity of the conditions of Theorem \ref{thm:d1} can then be shown on basis of the involutive distributions $\Delta_i=\Delta_{i-1}+[f_p,\Delta_{i-1}]$, where $\Delta_0=\Span{\partial_{\bar{u}^1_1},\partial_{\bar{u}^2}}$ and $f_p=\bar{f}^i(x,\bar{u})\partial_{x^i}+\bar{u}^1_1\partial_{\bar{u}^1}$, which are involved in the test for static feedback linearizability of the prolonged system (see Theorem \ref{thm:sfl}).
			
			The necessity part of the proof is in fact very similar to the proof of the necessity of the items \ref{d2:3b} to \ref{d2:5} of Theorem \ref{thm:d2}. As we have already noted above, these items in fact coincide with the items \ref{d1:1} to \ref{d1:3} of Theorem \ref{thm:d1} when $D_i$ and $k_1$ are replaced by $E_i$ and $k_2$. 
		\subsection*{Sufficiency}
			Consider a two-input system of the form \eqref{eq:sys} and assume that it meets the conditions of Theorem \ref{thm:d1}. In any of the two cases, \ie independent of $D_{k_1-1}\subset\C{D_{k_1}}$ (which corresponds to \ref{d1:2a}) or $D_{k_1-1}\not\subset\C{D_{k_1}}$ (which corresponds to \ref{d1:2b}), it follows that we have the following sequence of involutive distributions
			\begin{align}\label{eq:d1invSequence}
				\begin{aligned}
					&D_0\underset{2}{\subset}D_1\underset{2}{\subset}\ldots\underset{2}{\subset}D_{k_1-1}\underset{1}{\subset}E_{k_1}\underset{2}{\subset}E_{k_1+1}=\overline{D}_{k_1}\subset\ldots\subset E_s=\TXU\,,
				\end{aligned}
			\end{align}
			based on which a change of coordinates such that the system takes the structurally flat triangular form
			\begin{align}\label{eq:d1sysTriangular}
				\begin{aligned}
					\dot{\bar{x}}_s&=f_s(\bar{x}_s,\bar{x}_{s-1})\\
					&\vdotswithin{=}\\
					\dot{\bar{x}}_{k_1+2}&=f_{k_1+2}(\bar{x}_s,\ldots,\bar{x}_{k_1+1})\\
					\dot{\bar{x}}_{k_1+1}&=\tilde{f}_{k_1+1}(\bar{x}_s,\ldots,\bar{x}_{k_1}^1,\tilde{x}_{k_1-1}^2)\\
					\dot{\bar{x}}_{k_1}^1&=\tilde{f}_{k_1}^1(\bar{x}_s,\ldots,\tilde{x}_{k_1-1})\\
					\dot{\tilde{x}}_{k_1-1}&=\tilde{f}_{k_1-1}(\bar{x}_s,\ldots,\bar{x}_{k_1-2})\\
					&\vdotswithin{=}\\
					\dot{\bar{x}}_2&=\tilde{f}_2(\bar{x}_s,\ldots,\bar{x}_1)\\
					\dot{\bar{x}}_1&=\tilde{f}_1(\bar{x}_s,\ldots,\bar{x}_1,u)\,,
				\end{aligned}
			\end{align}
			can be derived (in case of $k_1=1$, the variables $\tilde{x}_{k_1-1}$ are actually inputs instead of states), from which it follows that the system is indeed flat with a difference of $d=1$. 
			\begin{remark}
				In case of \ref{d1:2a}, \ie if $D_{k_1-1}\subset\C{D_{k_1}}$, by successively introducing new coordinates in \eqref{eq:d1sysTriangular} from top to bottom, the system would take the triangular from proposed in \cite{GstottnerKolarSchoberl:2020a}.
			\end{remark}
	\bibliographystyle{apacite}
	\bibliography{Bibliography}

\begin{thebibliography}{}

\bibitem [\protect \citeauthoryear {%
Fliess%
, L{\'e}vine%
, Martin%
\BCBL {}\ \BBA {} Rouchon%
}{%
Fliess%
\ \protect \BOthers {.}}{%
{\protect \APACyear {1992}}%
}]{%
FliessLevineMartinRouchon:1992}
\APACinsertmetastar {%
FliessLevineMartinRouchon:1992}%
\begin{APACrefauthors}%
Fliess, M.%
, L{\'e}vine, J.%
, Martin, P.%
\BCBL {}\ \BBA {} Rouchon, P.%
\end{APACrefauthors}%
\unskip\
\newblock
\APACrefYearMonthDay{1992}{}{}.
\newblock
{\BBOQ}\APACrefatitle {Sur les syst{\`e}mes non lin{\'e}aires
  diff{\'e}rentiellement plats} {Sur les syst{\`e}mes non lin{\'e}aires
  diff{\'e}rentiellement plats}.{\BBCQ}
\newblock
\APACjournalVolNumPages{Comptes rendus de l'Acad{\'e}mie des sciences.
  S{\'e}rie I, Math{\'e}matique}{315}{}{619--624}.
\PrintBackRefs{\CurrentBib}

\bibitem [\protect \citeauthoryear {%
Fliess%
, L{\'e}vine%
, Martin%
\BCBL {}\ \BBA {} Rouchon%
}{%
Fliess%
\ \protect \BOthers {.}}{%
{\protect \APACyear {1995}}%
}]{%
FliessLevineMartinRouchon:1995}
\APACinsertmetastar {%
FliessLevineMartinRouchon:1995}%
\begin{APACrefauthors}%
Fliess, M.%
, L{\'e}vine, J.%
, Martin, P.%
\BCBL {}\ \BBA {} Rouchon, P.%
\end{APACrefauthors}%
\unskip\
\newblock
\APACrefYearMonthDay{1995}{}{}.
\newblock
{\BBOQ}\APACrefatitle {Flatness and defect of non-linear systems: introductory
  theory and examples} {Flatness and defect of non-linear systems: introductory
  theory and examples}.{\BBCQ}
\newblock
\APACjournalVolNumPages{International Journal of Control}{61}{6}{1327--1361}.
\PrintBackRefs{\CurrentBib}

\bibitem [\protect \citeauthoryear {%
Fliess%
, L{\'e}vine%
, Martin%
\BCBL {}\ \BBA {} Rouchon%
}{%
Fliess%
\ \protect \BOthers {.}}{%
{\protect \APACyear {1999}}%
}]{%
FliessLevineMartinRouchon:1999}
\APACinsertmetastar {%
FliessLevineMartinRouchon:1999}%
\begin{APACrefauthors}%
Fliess, M.%
, L{\'e}vine, J.%
, Martin, P.%
\BCBL {}\ \BBA {} Rouchon, P.%
\end{APACrefauthors}%
\unskip\
\newblock
\APACrefYearMonthDay{1999}{}{}.
\newblock
{\BBOQ}\APACrefatitle {A {L}ie-{B}{\"a}cklund Approach to Equivalence and
  Flatness of Nonlinear Systems} {A {L}ie-{B}{\"a}cklund approach to
  equivalence and flatness of nonlinear systems}.{\BBCQ}
\newblock
\APACjournalVolNumPages{IEEE Transactions on Automatic
  Control}{44}{5}{922--937}.
\PrintBackRefs{\CurrentBib}

\bibitem [\protect \citeauthoryear {%
Gst{\"o}ttner%
, Kolar%
\BCBL {}\ \BBA {} Sch{\"o}berl%
}{%
Gst{\"o}ttner%
\ \protect \BOthers {.}}{%
{\protect \APACyear {2020}}%
{\protect \APACexlab {{\protect \BCnt {1}}}}}]{%
GstottnerKolarSchoberl:2020a}
\APACinsertmetastar {%
GstottnerKolarSchoberl:2020a}%
\begin{APACrefauthors}%
Gst{\"o}ttner, C.%
, Kolar, B.%
\BCBL {}\ \BBA {} Sch{\"o}berl, M.%
\end{APACrefauthors}%
\unskip\
\newblock
\APACrefYearMonthDay{2020{\protect \BCnt {1}}}{}{}.
\newblock
{\BBOQ}\APACrefatitle {On a Flat Triangular Form Based on the Extended Chained
  Form} {On a flat triangular form based on the extended chained form}.{\BBCQ}
\newblock
\APACjournalVolNumPages{to appear in IFAC-PapersOnLine}{}{}{}.
\newblock
\APACrefnote{24th International Symposium on Mathematical Theory of Networks
  and Systems (MTNS)}
\PrintBackRefs{\CurrentBib}

\bibitem [\protect \citeauthoryear {%
Gst{\"o}ttner%
, Kolar%
\BCBL {}\ \BBA {} Sch{\"o}berl%
}{%
Gst{\"o}ttner%
\ \protect \BOthers {.}}{%
{\protect \APACyear {2020}}%
{\protect \APACexlab {{\protect \BCnt {2}}}}}]{%
GstottnerKolarSchoberl:2020b}
\APACinsertmetastar {%
GstottnerKolarSchoberl:2020b}%
\begin{APACrefauthors}%
Gst{\"o}ttner, C.%
, Kolar, B.%
\BCBL {}\ \BBA {} Sch{\"o}berl, M.%
\end{APACrefauthors}%
\unskip\
\newblock
\APACrefYearMonthDay{2020{\protect \BCnt {2}}}{}{}.
\newblock
{\BBOQ}\APACrefatitle {On the Linearization of Flat Two-Input Systems by
  Prolongations and Applications to Control Design} {On the linearization of
  flat two-input systems by prolongations and applications to control
  design}.{\BBCQ}
\newblock
\APACjournalVolNumPages{IFAC-PapersOnLine}{53}{2}{5479-5486}.
\newblock
\APACrefnote{21th IFAC World Congress}
\PrintBackRefs{\CurrentBib}

\bibitem [\protect \citeauthoryear {%
Gst{\"o}ttner%
, Kolar%
\BCBL {}\ \BBA {} Sch{\"o}berl%
}{%
Gst{\"o}ttner%
\ \protect \BOthers {.}}{%
{\protect \APACyear {2020}}%
{\protect \APACexlab {{\protect \BCnt {3}}}}}]{%
GstottnerKolarSchoberl:2020c}
\APACinsertmetastar {%
GstottnerKolarSchoberl:2020c}%
\begin{APACrefauthors}%
Gst{\"o}ttner, C.%
, Kolar, B.%
\BCBL {}\ \BBA {} Sch{\"o}berl, M.%
\end{APACrefauthors}%
\unskip\
\newblock
\APACrefYearMonthDay{2020{\protect \BCnt {3}}}{}{}.
\newblock
{\BBOQ}\APACrefatitle {A Structurally Flat Triangular Form Based on the
  Extended Chained Form} {A structurally flat triangular form based on the
  extended chained form}.{\BBCQ}
\newblock
\APACjournalVolNumPages{International Journal of Control}{}{}{}.
\newblock
\begin{APACrefDOI} \doi{10.1080/00207179.2020.1841302} \end{APACrefDOI}
\PrintBackRefs{\CurrentBib}

\bibitem [\protect \citeauthoryear {%
Gst{\"o}ttner%
, Kolar%
\BCBL {}\ \BBA {} Sch{\"o}berl%
}{%
Gst{\"o}ttner%
\ \protect \BOthers {.}}{%
{\protect \APACyear {2021}}%
}]{%
GstottnerKolarSchoberl:2021}
\APACinsertmetastar {%
GstottnerKolarSchoberl:2021}%
\begin{APACrefauthors}%
Gst{\"o}ttner, C.%
, Kolar, B.%
\BCBL {}\ \BBA {} Sch{\"o}berl, M.%
\end{APACrefauthors}%
\unskip\
\newblock
\APACrefYearMonthDay{2021}{}{}.
\newblock
{\BBOQ}\APACrefatitle {A Finite Test for the Linearizability of Two-Input
  Systems by a Two-Dimensional Endogenous Dynamic Feedback} {A finite test for
  the linearizability of two-input systems by a two-dimensional endogenous
  dynamic feedback}.{\BBCQ}
\newblock
\APACjournalVolNumPages{arXiv:2011.06929 [math.DS]}{}{}{}.
\newblock
\APACrefnote{ECC 2021}
\PrintBackRefs{\CurrentBib}

\bibitem [\protect \citeauthoryear {%
Jakubczyk%
\ \BBA {} Respondek%
}{%
Jakubczyk%
\ \BBA {} Respondek%
}{%
{\protect \APACyear {1980}}%
}]{%
JakubczykRespondek:1980}
\APACinsertmetastar {%
JakubczykRespondek:1980}%
\begin{APACrefauthors}%
Jakubczyk, B.%
\BCBT {}\ \BBA {} Respondek, W.%
\end{APACrefauthors}%
\unskip\
\newblock
\APACrefYearMonthDay{1980}{}{}.
\newblock
{\BBOQ}\APACrefatitle {On linearization of control systems} {On linearization
  of control systems}.{\BBCQ}
\newblock
\APACjournalVolNumPages{Bull. Acad. Polonaise Sci. Ser. Sci.
  Math.}{28}{}{517--522}.
\PrintBackRefs{\CurrentBib}

\bibitem [\protect \citeauthoryear {%
Kai%
}{%
Kai%
}{%
{\protect \APACyear {2006}}%
}]{%
Kai:2006}
\APACinsertmetastar {%
Kai:2006}%
\begin{APACrefauthors}%
Kai, T.%
\end{APACrefauthors}%
\unskip\
\newblock
\APACrefYearMonthDay{2006}{}{}.
\newblock
{\BBOQ}\APACrefatitle {Extended Chained Forms and Their Application to
  Nonholonomic Kinematic Systems with Affine Constraints: Control of a Coin on
  a Rotating Table} {Extended chained forms and their application to
  nonholonomic kinematic systems with affine constraints: Control of a coin on
  a rotating table}.{\BBCQ}
\newblock
\APACjournalVolNumPages{Proceedings of the 45th IEEE Conference on Decision and
  Control}{}{}{6104-6109}.
\PrintBackRefs{\CurrentBib}

\bibitem [\protect \citeauthoryear {%
Kolar%
, Sch{\"o}berl%
\BCBL {}\ \BBA {} Schlacher%
}{%
Kolar%
\ \protect \BOthers {.}}{%
{\protect \APACyear {2016}}%
}]{%
KolarSchoberlSchlacher:2016}
\APACinsertmetastar {%
KolarSchoberlSchlacher:2016}%
\begin{APACrefauthors}%
Kolar, B.%
, Sch{\"o}berl, M.%
\BCBL {}\ \BBA {} Schlacher, K.%
\end{APACrefauthors}%
\unskip\
\newblock
\APACrefYearMonthDay{2016}{}{}.
\newblock
{\BBOQ}\APACrefatitle {Properties of Flat Systems with regard to the
  Parameterization of the System Variables by the Flat Output} {Properties of
  flat systems with regard to the parameterization of the system variables by
  the flat output}.{\BBCQ}
\newblock
\APACjournalVolNumPages{IFAC-PapersOnLine}{49}{18}{814-819}.
\newblock
\APACrefnote{10th IFAC Symposium on Nonlinear Control Systems NOLCOS 2016}
\PrintBackRefs{\CurrentBib}

\bibitem [\protect \citeauthoryear {%
L{\'e}vine%
}{%
L{\'e}vine%
}{%
{\protect \APACyear {2009}}%
}]{%
Levine:2009}
\APACinsertmetastar {%
Levine:2009}%
\begin{APACrefauthors}%
L{\'e}vine, J.%
\end{APACrefauthors}%
\unskip\
\newblock
\APACrefYear{2009}.
\newblock
\APACrefbtitle {Analysis and Control of Nonlinear Systems: A Flatness-based
  Approach} {Analysis and control of nonlinear systems: A flatness-based
  approach}.
\newblock
\APACaddressPublisher{Berlin}{Springer}.
\PrintBackRefs{\CurrentBib}

\bibitem [\protect \citeauthoryear {%
Li%
, Nicolau%
\BCBL {}\ \BBA {} Respondek%
}{%
Li%
\ \protect \BOthers {.}}{%
{\protect \APACyear {2016}}%
}]{%
LiNicolauRespondek:2016}
\APACinsertmetastar {%
LiNicolauRespondek:2016}%
\begin{APACrefauthors}%
Li, S.%
, Nicolau, F.%
\BCBL {}\ \BBA {} Respondek, W.%
\end{APACrefauthors}%
\unskip\
\newblock
\APACrefYearMonthDay{2016}{}{}.
\newblock
{\BBOQ}\APACrefatitle {Multi-input control-affine systems static feedback
  equivalent to a triangular form and their flatness} {Multi-input
  control-affine systems static feedback equivalent to a triangular form and
  their flatness}.{\BBCQ}
\newblock
\APACjournalVolNumPages{International Journal of Control}{89}{1}{1--24}.
\PrintBackRefs{\CurrentBib}

\bibitem [\protect \citeauthoryear {%
Li%
, Xu%
, Su%
\BCBL {}\ \BBA {} Chu%
}{%
Li%
\ \protect \BOthers {.}}{%
{\protect \APACyear {2013}}%
}]{%
LiXuSuChu:2013}
\APACinsertmetastar {%
LiXuSuChu:2013}%
\begin{APACrefauthors}%
Li, S.%
, Xu, C.%
, Su, H.%
\BCBL {}\ \BBA {} Chu, J.%
\end{APACrefauthors}%
\unskip\
\newblock
\APACrefYearMonthDay{2013}{}{}.
\newblock
{\BBOQ}\APACrefatitle {Characterization and flatness of the extended chained
  system} {Characterization and flatness of the extended chained
  system}.{\BBCQ}
\newblock
\BIn{} \APACrefbtitle {Proceedings of the 32nd Chinese Control Conference}
  {Proceedings of the 32nd chinese control conference}\ (\BPGS\ 1047--1051).
\PrintBackRefs{\CurrentBib}

\bibitem [\protect \citeauthoryear {%
Martin%
\ \BBA {} Rouchon%
}{%
Martin%
\ \BBA {} Rouchon%
}{%
{\protect \APACyear {1994}}%
}]{%
MartinRouchon:1994}
\APACinsertmetastar {%
MartinRouchon:1994}%
\begin{APACrefauthors}%
Martin, P.%
\BCBT {}\ \BBA {} Rouchon, P.%
\end{APACrefauthors}%
\unskip\
\newblock
\APACrefYearMonthDay{1994}{}{}.
\newblock
{\BBOQ}\APACrefatitle {Feedback linearization and driftless systems} {Feedback
  linearization and driftless systems}.{\BBCQ}
\newblock
\APACjournalVolNumPages{Mathematics of Control, Signals and
  Systems}{7}{3}{235--254}.
\PrintBackRefs{\CurrentBib}

\bibitem [\protect \citeauthoryear {%
Nicolau%
\ \BBA {} Respondek%
}{%
Nicolau%
\ \BBA {} Respondek%
}{%
{\protect \APACyear {2016}}%
{\protect \APACexlab {{\protect \BCnt {1}}}}}]{%
NicolauRespondek:2016-2}
\APACinsertmetastar {%
NicolauRespondek:2016-2}%
\begin{APACrefauthors}%
Nicolau, F.%
\BCBT {}\ \BBA {} Respondek, W.%
\end{APACrefauthors}%
\unskip\
\newblock
\APACrefYearMonthDay{2016{\protect \BCnt {1}}}{Dec}{}.
\newblock
{\BBOQ}\APACrefatitle {Flatness of two-input control-affine systems
  linearizable via a two-fold prolongation} {Flatness of two-input
  control-affine systems linearizable via a two-fold prolongation}.{\BBCQ}
\newblock
\BIn{} \APACrefbtitle {2016 IEEE 55th Conference on Decision and Control (CDC)}
  {2016 ieee 55th conference on decision and control (cdc)}\ (\BPGS\
  3862--3867).
\PrintBackRefs{\CurrentBib}

\bibitem [\protect \citeauthoryear {%
Nicolau%
\ \BBA {} Respondek%
}{%
Nicolau%
\ \BBA {} Respondek%
}{%
{\protect \APACyear {2016}}%
{\protect \APACexlab {{\protect \BCnt {2}}}}}]{%
NicolauRespondek:2016}
\APACinsertmetastar {%
NicolauRespondek:2016}%
\begin{APACrefauthors}%
Nicolau, F.%
\BCBT {}\ \BBA {} Respondek, W.%
\end{APACrefauthors}%
\unskip\
\newblock
\APACrefYearMonthDay{2016{\protect \BCnt {2}}}{}{}.
\newblock
{\BBOQ}\APACrefatitle {Two-input control-affine systems linearizable via
  one-fold prolongation and their flatness} {Two-input control-affine systems
  linearizable via one-fold prolongation and their flatness}.{\BBCQ}
\newblock
\APACjournalVolNumPages{European Journal of Control}{28}{}{20--37}.
\PrintBackRefs{\CurrentBib}

\bibitem [\protect \citeauthoryear {%
Nicolau%
\ \BBA {} Respondek%
}{%
Nicolau%
\ \BBA {} Respondek%
}{%
{\protect \APACyear {2017}}%
}]{%
NicolauRespondek:2017}
\APACinsertmetastar {%
NicolauRespondek:2017}%
\begin{APACrefauthors}%
Nicolau, F.%
\BCBT {}\ \BBA {} Respondek, W.%
\end{APACrefauthors}%
\unskip\
\newblock
\APACrefYearMonthDay{2017}{}{}.
\newblock
{\BBOQ}\APACrefatitle {Flatness of Multi-Input Control-Affine Systems
  Linearizable via One-Fold Prolongation} {Flatness of multi-input
  control-affine systems linearizable via one-fold prolongation}.{\BBCQ}
\newblock
\APACjournalVolNumPages{SIAM J. Control and Optimization}{55}{}{3171--3203}.
\PrintBackRefs{\CurrentBib}

\bibitem [\protect \citeauthoryear {%
Nicolau%
\ \BBA {} Respondek%
}{%
Nicolau%
\ \BBA {} Respondek%
}{%
{\protect \APACyear {2019}}%
}]{%
NicolauRespondek:2019}
\APACinsertmetastar {%
NicolauRespondek:2019}%
\begin{APACrefauthors}%
Nicolau, F.%
\BCBT {}\ \BBA {} Respondek, W.%
\end{APACrefauthors}%
\unskip\
\newblock
\APACrefYearMonthDay{2019}{}{}.
\newblock
{\BBOQ}\APACrefatitle {Normal forms for multi-input flat systems of minimal
  differential weight} {Normal forms for multi-input flat systems of minimal
  differential weight}.{\BBCQ}
\newblock
\APACjournalVolNumPages{International Journal of Robust and Nonlinear
  Control}{29}{10}{3139--3162}.
\PrintBackRefs{\CurrentBib}

\bibitem [\protect \citeauthoryear {%
Nicolau%
\ \BBA {} Respondek%
}{%
Nicolau%
\ \BBA {} Respondek%
}{%
{\protect \APACyear {2020}}%
}]{%
NicolauRespondek:2020}
\APACinsertmetastar {%
NicolauRespondek:2020}%
\begin{APACrefauthors}%
Nicolau, F.%
\BCBT {}\ \BBA {} Respondek, W.%
\end{APACrefauthors}%
\unskip\
\newblock
\APACrefYearMonthDay{2020}{}{}.
\newblock
{\BBOQ}\APACrefatitle {Normal Forms for Flat Two-input Control Systems
  Linearizable via a Two-fold Prolongation} {Normal forms for flat two-input
  control systems linearizable via a two-fold prolongation}.{\BBCQ}
\newblock
\APACjournalVolNumPages{IFAC-PapersOnLine}{53}{2}{5441-5446}.
\newblock
\APACrefnote{21th IFAC World Congress}
\PrintBackRefs{\CurrentBib}

\bibitem [\protect \citeauthoryear {%
Nijmeijer%
\ \BBA {} van~der Schaft%
}{%
Nijmeijer%
\ \BBA {} van~der Schaft%
}{%
{\protect \APACyear {1990}}%
}]{%
NijmeijervanderSchaft:1990}
\APACinsertmetastar {%
NijmeijervanderSchaft:1990}%
\begin{APACrefauthors}%
Nijmeijer, H.%
\BCBT {}\ \BBA {} van~der Schaft, A.%
\end{APACrefauthors}%
\unskip\
\newblock
\APACrefYear{1990}.
\newblock
\APACrefbtitle {Nonlinear Dynamical Control Systems} {Nonlinear dynamical
  control systems}.
\newblock
\APACaddressPublisher{New York}{Springer}.
\PrintBackRefs{\CurrentBib}

\bibitem [\protect \citeauthoryear {%
Pomet%
}{%
Pomet%
}{%
{\protect \APACyear {1997}}%
}]{%
Pomet:1997}
\APACinsertmetastar {%
Pomet:1997}%
\begin{APACrefauthors}%
Pomet, J.%
\end{APACrefauthors}%
\unskip\
\newblock
\APACrefYearMonthDay{1997}{}{}.
\newblock
{\BBOQ}\APACrefatitle {On dynamic feedback linearization of four-dimensional
  affine control systems with two inputs} {On dynamic feedback linearization of
  four-dimensional affine control systems with two inputs}.{\BBCQ}
\newblock
\APACjournalVolNumPages{ESAIM: Control, Optimisation and Calculus of
  Variations}{2}{}{151--230}.
\PrintBackRefs{\CurrentBib}

\bibitem [\protect \citeauthoryear {%
Schlacher%
\ \BBA {} Sch{\"o}berl%
}{%
Schlacher%
\ \BBA {} Sch{\"o}berl%
}{%
{\protect \APACyear {2013}}%
}]{%
SchlacherSchoberl:2013}
\APACinsertmetastar {%
SchlacherSchoberl:2013}%
\begin{APACrefauthors}%
Schlacher, K.%
\BCBT {}\ \BBA {} Sch{\"o}berl, M.%
\end{APACrefauthors}%
\unskip\
\newblock
\APACrefYearMonthDay{2013}{}{}.
\newblock
{\BBOQ}\APACrefatitle {A Jet Space Approach to Check {P}faffian Systems for
  Flatness} {A jet space approach to check {P}faffian systems for
  flatness}.{\BBCQ}
\newblock
\BIn{} \APACrefbtitle {Proceedings 52nd IEEE Conference on Decision and Control
  (CDC)} {Proceedings 52nd ieee conference on decision and control (cdc)}\
  (\BPGS\ 2576--2581).
\PrintBackRefs{\CurrentBib}

\bibitem [\protect \citeauthoryear {%
Sch{\"o}berl%
}{%
Sch{\"o}berl%
}{%
{\protect \APACyear {2014}}%
}]{%
Schoberl:2014}
\APACinsertmetastar {%
Schoberl:2014}%
\begin{APACrefauthors}%
Sch{\"o}berl, M.%
\end{APACrefauthors}%
\unskip\
\newblock
\APACrefYear{2014}.
\newblock
\APACrefbtitle {Contributions to the Analysis of Structural Properties of
  Dynamical Systems in Control and Systems Theory - A Geometric Approach}
  {Contributions to the analysis of structural properties of dynamical systems
  in control and systems theory - a geometric approach}.
\newblock
\APACaddressPublisher{Aachen}{Shaker Verlag}.
\PrintBackRefs{\CurrentBib}

\bibitem [\protect \citeauthoryear {%
Sch{\"o}berl%
, Rieger%
\BCBL {}\ \BBA {} Schlacher%
}{%
Sch{\"o}berl%
\ \protect \BOthers {.}}{%
{\protect \APACyear {2010}}%
}]{%
SchoberlRiegerSchlacher:2010}
\APACinsertmetastar {%
SchoberlRiegerSchlacher:2010}%
\begin{APACrefauthors}%
Sch{\"o}berl, M.%
, Rieger, K.%
\BCBL {}\ \BBA {} Schlacher, K.%
\end{APACrefauthors}%
\unskip\
\newblock
\APACrefYearMonthDay{2010}{}{}.
\newblock
{\BBOQ}\APACrefatitle {System parametrization using affine derivative systems}
  {System parametrization using affine derivative systems}.{\BBCQ}
\newblock
\BIn{} \APACrefbtitle {Proceedings 19th International Symposium on Mathematical
  Theory of Networks and Systems (MTNS)} {Proceedings 19th international
  symposium on mathematical theory of networks and systems (mtns)}\ (\BPGS\
  1737--1743).
\PrintBackRefs{\CurrentBib}

\bibitem [\protect \citeauthoryear {%
Sch{\"o}berl%
\ \BBA {} Schlacher%
}{%
Sch{\"o}berl%
\ \BBA {} Schlacher%
}{%
{\protect \APACyear {2010}}%
}]{%
SchoberlSchlacher:2010}
\APACinsertmetastar {%
SchoberlSchlacher:2010}%
\begin{APACrefauthors}%
Sch{\"o}berl, M.%
\BCBT {}\ \BBA {} Schlacher, K.%
\end{APACrefauthors}%
\unskip\
\newblock
\APACrefYearMonthDay{2010}{}{}.
\newblock
{\BBOQ}\APACrefatitle {On Parametrizations for a Special Class of Nonlinear
  Systems} {On parametrizations for a special class of nonlinear
  systems}.{\BBCQ}
\newblock
\APACjournalVolNumPages{IFAC Proceedings Volumes}{43}{14}{1261-1266}.
\newblock
\APACrefnote{8th IFAC Symposium on Nonlinear Control Systems}
\PrintBackRefs{\CurrentBib}

\bibitem [\protect \citeauthoryear {%
Sch{\"o}berl%
\ \BBA {} Schlacher%
}{%
Sch{\"o}berl%
\ \BBA {} Schlacher%
}{%
{\protect \APACyear {2011}}%
}]{%
SchoberlSchlacher:2011}
\APACinsertmetastar {%
SchoberlSchlacher:2011}%
\begin{APACrefauthors}%
Sch{\"o}berl, M.%
\BCBT {}\ \BBA {} Schlacher, K.%
\end{APACrefauthors}%
\unskip\
\newblock
\APACrefYearMonthDay{2011}{}{}.
\newblock
{\BBOQ}\APACrefatitle {On calculating flat outputs for Pfaffian systems by a
  reduction procedure - demonstrated by means of the VTOL example} {On
  calculating flat outputs for pfaffian systems by a reduction procedure -
  demonstrated by means of the vtol example}.{\BBCQ}
\newblock
\BIn{} \APACrefbtitle {9th IEEE International Conference on Control $\&$
  Automation (ICCA11)} {9th ieee international conference on control $\&$
  automation (icca11)}\ (\BPGS\ 477--482).
\PrintBackRefs{\CurrentBib}

\bibitem [\protect \citeauthoryear {%
Sch{\"o}berl%
\ \BBA {} Schlacher%
}{%
Sch{\"o}berl%
\ \BBA {} Schlacher%
}{%
{\protect \APACyear {2014}}%
}]{%
SchoberlSchlacher:2014}
\APACinsertmetastar {%
SchoberlSchlacher:2014}%
\begin{APACrefauthors}%
Sch{\"o}berl, M.%
\BCBT {}\ \BBA {} Schlacher, K.%
\end{APACrefauthors}%
\unskip\
\newblock
\APACrefYearMonthDay{2014}{}{}.
\newblock
{\BBOQ}\APACrefatitle {On an implicit triangular decomposition of nonlinear
  control systems that are 1-flat - A constructive approach} {On an implicit
  triangular decomposition of nonlinear control systems that are 1-flat - a
  constructive approach}.{\BBCQ}
\newblock
\APACjournalVolNumPages{Automatica}{50}{}{1649--1655}.
\PrintBackRefs{\CurrentBib}

\bibitem [\protect \citeauthoryear {%
Sch{\"o}berl%
\ \BBA {} Schlacher%
}{%
Sch{\"o}berl%
\ \BBA {} Schlacher%
}{%
{\protect \APACyear {2015}}%
}]{%
SchoberlSchlacher:2015}
\APACinsertmetastar {%
SchoberlSchlacher:2015}%
\begin{APACrefauthors}%
Sch{\"o}berl, M.%
\BCBT {}\ \BBA {} Schlacher, K.%
\end{APACrefauthors}%
\unskip\
\newblock
\APACrefYearMonthDay{2015}{}{}.
\newblock
{\BBOQ}\APACrefatitle {On Geometric Properties of Triangularizations for
  Nonlinear Control Systems} {On geometric properties of triangularizations for
  nonlinear control systems}.{\BBCQ}
\newblock
\BIn{} M\BPBI K.~Camlibel, A\BPBI A.~Julius, R.~Pasumarthy\BCBL {}\ \BBA {}
  J\BPBI M.~Scherpen\ (\BEDS), \APACrefbtitle {Mathematical Control Theory I}
  {Mathematical control theory i}\ (\BPGS\ 237--255).
\newblock
\APACaddressPublisher{Cham}{Springer International Publishing}.
\PrintBackRefs{\CurrentBib}

\end{thebibliography}
	\appendix
	\section{Supplements}\label{ap:supplements}
		\subsection{Proof of Lemma \ref{lem:algWithinUniqueness}}
			By assumption, we have $D_{k_1-1}=D_{k_1-2}+[f,D_{k_1-2}]$ and $D_{k_1}=D_{k_1-1}+[f,D_{k_1-1}]$ with $D_{k_1-2}$ and $D_{k_1-1}$ being involutive. For every vector field $w\in D_{k_1-2}$ and $v\in D_{k_1-1}$, the Jacobi identity
			\begin{align*}
				\begin{aligned}
					\underbrace{[v,\underbrace{[w,f]}_{\in D_{k_1-1}}]}_{\in D_{k_1-1}\subset D_{k_1}}+\underbrace{[f,\underbrace{[v,w]}_{\in D_{k_1-1}}]}_{\in D_{k_1}}+[w,\underbrace{[f,v]}_{\in D_{k}}]&=0
				\end{aligned}
			\end{align*}
			holds, from which it follows that $D_{k_1-2}\subset\C{D_{k_1}}$. We furthermore have $D_{k_1-2}\underset{2}{\subset}D_{k_1-1}\underset{2}{\subset}D_{k_1}$ and therefore, with suitable vector fields $v_i$, we have $D_{k_1-1}=D_{k_1-2}+\Span{v_1,v_2}$ and $D_{k_1}=D_{k_1-1}+\Span{[v_1,f],[v_2,f]}$. By assumption, we have $D_{k_1-1}\not\subset\C{D_{k_1}}$, which in turn implies that at least one of the vector fields $[v_1,[v_1,f]]$, $[v_1,[v_2,f]]$ or $[v_2,[v_2,f]]$\footnote{Note that we have $[v_2,[v_1,f]]=[v_1,[v_2,f]]\Mod D_{k_1}$.} in \eqref{eq:algWithin}, \ie 
			\begin{align}\label{eq:algWithinProof}
				\begin{aligned}
					(\alpha^1)^2[v_1,[v_1,f]]+2\alpha^1\alpha^2[v_1,[v_2,f]]+(\alpha^2)^2[v_2,[v_2,f]]&\overset{!}{\in} D_{k_1}\,,
				\end{aligned}
			\end{align}
			is not contained in $D_{k_1}$. If they are all linearly independent modulo $D_{k_1}$, \ie $\Dim{D_{k_1}+[D_{k_1-1},D_{k_1}]}=\Dim{D_{k_1}}+3$, then in order for \eqref{eq:algWithinProof} to hold, each coefficient has to be zero, \ie $(\alpha^1)^2=0$, $2\alpha^1\alpha^2=0$ and $(\alpha^2)^2=0$, which only admits the trivial solution $\alpha^1=\alpha^2=0$. So we only have to consider the cases $\Dim{D_{k_1}+[D_{k_1-1},D_{k_1}]}=\Dim{D_{k_1}}+2$ and $\Dim{D_{k_1}+[D_{k_1-1},D_{k_1}]}=\Dim{D_{k_1}}+1$, \ie the cases that $[D_{k_1-1},D_{k_1}]$ yields $2$ new directions or $1$ new direction with respect to $D_{k_1}$, respectively.
			\paragraph*{Case 1:} Let us first consider the case $\Dim{D_{k_1}+[D_{k_1-1},D_{k_1}]}=\Dim{D_{k_1}}+2$, \ie $H=D_{k_1}+[D_{k_1-1},D_{k_1}]=D_{k_1}+\Span{w_1,w_2}$, where $w_1$ and $w_2$ are a suitable selection of the three vector fields $[v_1,[v_1,f]]$, $[v_1,[v_2,f]]$ and $[v_2,[v_2,f]]$. We distinguish between the following subcases:
			\begin{enumerate}
				\item $H=D_{k_1}+\Span{[v_1,[v_1,f]],[v_1,[v_2,f]]}$ and thus $[v_2,[v_2,f]]=\kappa^1[v_1,[v_1,f]]+\kappa^2[v_1,[v_2,f]]\Mod D_{k_1}$, which inserted into \eqref{eq:algWithinProof} yields
				\begin{align*}
					\begin{aligned}
						\left((\alpha^1)^2+(\alpha^2)^2\kappa^1\right)[v_1,[v_1,f]]+\left(2\alpha^1\alpha^2+(\alpha^2)^2\kappa^2\right)[v_1,[v_2,f]]\overset{!}{\in}D_{k_1}\,.
					\end{aligned}
				\end{align*}
				Since $[v_1,[v_1,f]]$ and $[v_1,[v_2,f]]$ are by assumption not collinear$\Mod D_1$, the (up to a multiplicative factor) only non-trivial solution is $\alpha^1=\kappa^2$ and $\alpha^2=-2$, provided that $(\kappa^2)^2+4\kappa^1=0$, otherwise, no non-trivial solution exists.
				If subcase (1) is not applicable, the vector fields $[v_1,[v_1,f]]$ and $[v_1,[v_2,f]]$ are collinear$\Mod D_{k_1}$, \ie $[v_1,[v_2,f]]=\kappa [v_1,[v_1,f]]\Mod D_{k_1}$ or $[v_1,[v_1,f]]\in D_{k_1}$. 
				\item The subcase $[v_1,[v_2,f]]=\kappa [v_1,[v_1,f]]\Mod D_{k_1}$ yields
				\begin{align*}
					\begin{aligned}
						\left((\alpha^1)^2+2\alpha^1\alpha^2\kappa\right)[v_1,[v_1,f]]+(\alpha^2)^2[v_2,[v_2,f]]\overset{!}{\in}D_{k_1}\,.
					\end{aligned}
				\end{align*}
				Since the vector fields $[v_1,[v_1,f]]$ and $[v_2,[v_2,f]]$ are by assumption not collinear$\Mod D_{k_1}$, the condition only admits the trivial solution $\alpha^1=\alpha^2=0$.
				\item The remaining subcase $[v_1,[v_1,f]]\in D_{k_1}$ yields
				\begin{align*}
					\begin{aligned}
						2\alpha^1\alpha^2[v_1,[v_2,f]]+(\alpha^2)^2[v_2,[v_2,f]]\overset{!}{\in}D_{k_1}\,.
					\end{aligned}
				\end{align*}
				Since $[v_1,[v_2,f]]$ and $[v_2,[v_2,f]]$ are by assumption not collinear$\Mod D_{k_1}$, the (up to a multiplicative factor) only non-trivial solution is $\alpha^1=1$ and $\alpha^2=0$.
			\end{enumerate}
			\paragraph*{Case 2:} Consider the case $\Dim{D_{k_1}+[D_{k_1-1},D_{k_1}]}=\Dim{D_{k_1}}+1$, \ie $H=D_{k_1}+[D_{k_1-1},D_{k_1}]=D_{k_1}+\Span{w_1}$, where $w_1$ is either $[v_1,[v_1,f]]$, $[v_1,[v_2,f]]$ or $[v_2,[v_2,f]]$. We distinguish between the following subcases:
			\begin{enumerate}
				\item $H=D_{k_1}+\Span{[v_1,[v_1,f]]}$ and thus $[v_1,[v_2,f]]=\kappa^1[v_1,[v_1,f]]\Mod D_{k_1}$ and $[v_2,[v_2,f]]=\kappa^2[v_1,[v_1,f]]\Mod D_{k_1}$, which inserted into \eqref{eq:algWithinProof} yields
				\begin{align*}
					\begin{aligned}
						\left((\alpha^1)^2+2\alpha^1\alpha^2\kappa^1+(\alpha^2)^2\kappa^2\right)[v_1,[v_1,f]]\overset{!}{\in}D_{k_1}\,.
					\end{aligned}
				\end{align*}
				The (up to a multiplicative factor) only non-trivial solutions are $\alpha^1=-\kappa^1\pm\sqrt{(\kappa^1)^2-\kappa^2}$ and $\alpha^2=1$. If subcase (1) is not applicable, \ie $[v_1,[v_1,f]]\in D_{k_1}$, we either have $[v_2,[v_2,f]]=\kappa [v_1,[v_2,f]]\Mod D_{k_1}$, or $[v_1,[v_1,f]],[v_1,[v_2,f]]\in D_{k_1}$. 
				\item The subcase $[v_1,[v_1,f]]\in D_{k_1}$ and $[v_2,[v_2,f]]=\kappa [v_1,[v_2,f]]\Mod D_{k_1}$ yields
				\begin{align*}
					\begin{aligned}
						\left(2\alpha^1\alpha^2+(\alpha^2)^2\kappa\right)[v_1,[v_2,f]]\overset{!}{\in}D_{k_1}\,.
					\end{aligned}
				\end{align*}
				The (up to a multiplicative factor) only non-trivial solutions are $\alpha^1=1$, $\alpha^2=0$ and $\alpha^1=\kappa$, $\alpha^2=-2$.
				\item The remaining subcase $[v_1,[v_1,f]],[v_1,[v_2,f]]\in D_{k_1}$ yields
				\begin{align*}
					\begin{aligned}
						(\alpha^2)^2[v_2,[v_2,f]]\overset{!}{\in}D_{k_1}\,,
					\end{aligned}
				\end{align*}
				and the (up to a multiplicative factor) only non-trivial solution is $\alpha^1=1$, $\alpha^2=0$.
			\end{enumerate}
			In conclusion, if $\Dim{D_{k_1}+[D_{k_1-1},D_{k_1}]}=\Dim{D_{k_1}}+3$, there exists no non-trivial solution. If $\Dim{D_{k_1}+[D_{k_1-1},D_{k_1}]}=\Dim{D_{k_1}}+2$, there either exists (up to a multiplicative factor) only one solution or no non-trivial solution. If $\Dim{D_{k_1}+[D_{k_1-1},D_{k_1}]}=\Dim{D_{k_1}}+1$, there exist (up to a multiplicative factor) at most two solutions or no non-trivial solutions (complex solutions are not relevant).
		\subsection{Proof of Theorem \ref{thm:linearizationByProlongations}}
			In \cite{GstottnerKolarSchoberl:2020a} the following result on the linearization of $(x,u)$-flat two-input systems has been shown (corresponding to Corollary 4 therein).
			\begin{lemma}\label{lem:linearizationXUflat}
				If the two-input system \eqref{eq:sys} possesses an $(x,u)$-flat output with a certain difference $d$, then it can be rendered static feedback linearizable by $d$-fold prolonging a suitably chosen (new) input after a suitable input transformation $\bar{u}=\Phi_u(x,u)$ has been applied.
			\end{lemma}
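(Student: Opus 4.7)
The plan is to construct, from an $(x,u)$-flat output $y = \varphi(x,u)$ with multi-index $R = (r_1,r_2)$ and parameterization $x = F_x(y_{[R-1]})$, $u = F_u(y_{[R]})$, a regular input transformation $\bar{u} = \Phi_u(x,u)$ under which the first new input has a \emph{lowered} parameterization $\bar{u}^1 = \bar{F}^1_u(y_{[R-d]})$ (with $d = \#R - n$), while $\bar{u}^2 = \bar{F}^2_u(y_{[R]})$ retains the original top order. Once such a $\Phi_u$ is available, a $d$-fold prolongation of $\bar{u}^1$ enlarges the state to dimension $n + d = \#R$ while keeping the input two-dimensional; $y$ then parameterizes the extended state by $y_{[R-1]}$ and the extended input by $y_{[R]}$ through equi-dimensional maps, and Theorem \ref{thm:sfl} applied to the prolonged system delivers static feedback linearization with $y$ as linearizing output.

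To build $\Phi_u$ I plan to proceed by iterated single-step reductions. At each stage examine the $2 \times 2$ Jacobian $J = \partial F_u / \partial(y^1_{r_1}, y^2_{r_2})$. Since by construction $r_j$ is the unique highest order of $y^j$ that actually occurs in $(F_x,F_u)$ and $F_x$ is independent of the top derivatives, a positive current difference $d > 0$ forces the existence of a nonzero left-null covector of $J$: intuitively, only $d$ independent combinations of the top-order derivatives can genuinely alter $(x,u)$ once the redundancy of $y_{[R-1]}$ in $F_x$ is accounted for, so some covariant combination of the two input directions annihilates $J$. Pulling this covector back through the parameterization yields a smooth row $m(x,u)$ on the state--input manifold such that $\tilde{v}^1 = m_l(x,u)\, u^l$ has a parameterization strictly below top order in at least one component of $R$. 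Completing $\tilde{v}^1$ to a regular $2 \times 2$ input matrix produces a one-step transformation that strictly decreases $\#R$; composing $d$ such steps yields the desired $\Phi_u$.

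To finish, I verify that the $d$-fold prolongation of $\bar{u}^1$ is static feedback linearizable with $y$ as linearizing output. Repeated Lie differentiation of $\bar{u}^1 = \bar{F}^1_u(y_{[R-d]})$ along the trajectory gives $\bar{u}^1_k = \bar{F}^1_{u_k}(y_{[R-d+k]})$ for $k = 0,\ldots,d$. Hence the extended state $(x, \bar{u}^1, \ldots, \bar{u}^1_{d-1})$ and the extended input $(\bar{u}^1_d, \bar{u}^2)$ are parameterized by $y_{[R-1]}$ and $y_{[R]}$ respectively through smooth maps between equi-dimensional manifolds (dimensions $n+d$ and $n+d+2$). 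As both maps are submersions by construction, a dimension count makes them local diffeomorphisms; $y$ is therefore a linearizing output of the prolonged system and Theorem \ref{thm:sfl} applies.

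The main obstacle will be the rank-drop step in the iterative construction of $\Phi_u$: a naive rank count of $J$ does not suffice, because the difference $d$ can in principle be absorbed by the redundancy of $F_x$ as a submersion without forcing $\mathrm{rank}(J) < 2$. A rigorous justification must instead use the uniqueness of the multi-index $R$ together with a Pfaffian / co-frame formulation, tracking at each stage the codistribution annihilating $\partial_u F_u$ and showing that it always descends to a covector on $\mathcal{X}\times\mathcal{U}$ producing a valid input combination. This is the algebraic heart of the proof and the step at which $(x,u)$-flatness (as opposed to a general flat output depending on higher derivatives of $u$) is genuinely used; the detailed construction is carried out in \cite{GstottnerKolarSchoberl:2020a}.
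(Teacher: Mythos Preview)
The paper does not actually prove this lemma: it is stated as an imported result, attributed to Corollary~4 of \cite{GstottnerKolarSchoberl:2020a}, and used as a black box in the proof of Theorem~\ref{thm:linearizationByProlongations}. Your proposal therefore goes further than the paper itself by attempting a proof sketch, and in the end you also defer the hard step to the same citation.

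Your closing verification is correct: once an input $\bar{u}^1$ with parameterization $\bar{u}^1=\bar{F}^1_u(y_{[R-d]})$ has been produced, the $d$-fold prolongation has state dimension $n+d=\#R$, the maps $y_{[R-1]}\mapsto(x,\bar{u}^1,\ldots,\bar{u}^1_{d-1})$ and $y_{[R]}\mapsto(\bar{u}^1_d,\bar{u}^2)$ are equi-dimensional submersions, hence local diffeomorphisms, and $y$ is a linearizing output of the prolonged system.

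The construction of $\Phi_u$, however, has a genuine gap beyond the one you flag. Your iteration scheme is: examine $J=\partial F_u/\partial(y^1_{r_1},y^2_{r_2})$, find a left-null covector, produce $\tilde v^1=m_l(x,u)u^l$ of strictly lower top order, and repeat $d$ times. But after the first step you have $\bar{u}^1=\bar{F}^1_u(y_{[R']})$ with $R'<R$ and $\bar{u}^2=\bar{F}^2_u(y_{[R]})$ still of top order $R$. At the second step, the matrix $\partial(\bar{F}^1_u,\bar{F}^2_u)/\partial(y^1_{r_1},y^2_{r_2})$ has its first row identically zero, so the obvious left-null covector is $(1,0)$ and the procedure simply returns $\bar{u}^1$ unchanged. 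To make progress one must instead lower the order of $\bar{u}^1$ by combining it with functions of $x$ (which have order $R-1$), i.e.\ study the $(n+1)\times 2$ Jacobian $\partial(F_x,\bar{F}^1_u)/\partial(y^1_{r'_1},y^2_{r'_2})$ and extract a new function of $(x,\bar{u}^1)$ of strictly lower order. That is a different mechanism from the one you describe, it is exactly where the $(x,u)$-flatness hypothesis enters nontrivially, and it is what \cite{GstottnerKolarSchoberl:2020a} actually establishes. Your last paragraph correctly identifies that the rank-drop heuristic alone is insufficient; the additional point is that even granting a single rank drop, the iteration as written stalls after one step.
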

			To prove Theorem \ref{thm:linearizationByProlongations}, \ie to show that two-input systems with $d\leq 2$ can be rendered static feedback linearizable by $d$-fold prolonging a suitably chosen (new) input, we only have to show that $d\leq 2$ implies $(x,u)$-flatness. To be precise, we have to show that minimal flat outputs with $d\leq 2$ are $(x,u)$-flat outputs. Below we will show this by contradiction, \ie we will show that a flat output with a difference of $d\leq 2$ which explicitly depends on derivatives of the inputs is never a minimal flat output. For that we will utilize a certain relation between the state dimension $n$ of the system, the difference $d$ of a flat output and the "generalized" relative degrees of its components, which we derive in the following.\\
			
			Given a flat output $y$, recall that $r_j$ denotes the order of the highest derivative $r_j$ of $y^j$ needed to parameterize $x$ and $u$ by this flat. For a component of a flat output which does not explicitly depend on a derivative of the inputs, we define the relative degree $k_j$ by
			\begin{align}\label{eq:relDeg}
				\begin{aligned}
					\Lie_f^{k_j-1}\varphi^j&=\varphi^j_{k_j-1}(x)\,,&&&\Lie_f^{k_j}\varphi^j&=\varphi^j_{k_j}(x,u)\,.
				\end{aligned}
			\end{align}
			The following lemma provides a relation between the state dimension $n$, $r_j$ and $k_j$.
			\begin{lemma}\label{lem:nrk}
				For an $(x,u)$-flat output $y=\varphi(x,u)$ of a two input system of the form \eqref{eq:sys}, the relations $r_1=n-k_2$ and $r_2=n-k_1$ hold.
			\end{lemma}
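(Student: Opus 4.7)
The plan is to establish $r_1 = n-k_2$ and $r_2 = n-k_1$ by analysing the jet-space dependence of the prolonged outputs $y^j_i$ and differentiating the parameterizing-map identities $F_x(y_{[R-1]}) \equiv x$ and $F_u(y_{[R]}) \equiv u$ with respect to the appropriate jet variables. Writing $\psi^j_i(x,u,u_1,\ldots) := y^j_i$ as a function on the jet manifold, the relative-degree definition \eqref{eq:relDeg} gives $\psi^j_i = \Lie_f^i\varphi^j(x)$ for $0 \le i \le k_j - 1$ (depending only on $x$), while $\psi^j_{k_j+l}$ depends on $(x,u,u_1,\ldots,u_l)$ and is affine in the top jet $u_l$ with coefficient $b^j(x,u) := \partial_u \Lie_f^{k_j}\varphi^j$, a nonzero $1\times 2$ row vector by the definition of $k_j$.

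The second step equalizes the two jet depths $r_j - k_j$. Suppose, for contradiction, that $r_1 - k_1 > r_2 - k_2$. When the $\psi^j_i$ are substituted into $F_u(y_{[R]}) \equiv u$, the variable $u^\alpha_{r_1-k_1}$ enters the left-hand side only through $\psi^1_{r_1}$, linearly with coefficient $b^1_\alpha$. Differentiating the identity with respect to $u^\alpha_{r_1-k_1}$ for $\alpha = 1, 2$ yields $\partial_{y^1_{r_1}}F_u \cdot b^1_\alpha = 0$; since $b^1 \ne 0$, this forces $\partial_{y^1_{r_1}}F_u = 0$, contradicting the definition of $r_1$ as the highest order of $y^1$ appearing in the parameterizing map. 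The symmetric argument excludes $r_2 - k_2 > r_1 - k_1$, so $r_1 - k_1 = r_2 - k_2 =: d^\star$.

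The third step identifies $d^\star$ with $n - k_1 - k_2$. The $k_1 + k_2$ state-only functions $\{\Lie_f^i\varphi^j: 0 \le i \le k_j - 1,\ j=1,2\}$ must be functionally independent, since a non-trivial relation among them would hold along every trajectory as a relation among the $y^j_i$, contradicting the differential independence of $\varphi^1, \varphi^2$. Hence they span a codistribution on $\mathcal{X}$ of dimension $k_1 + k_2$, leaving $n - k_1 - k_2$ internal state directions that must be recovered from higher derivatives. Examining the Jacobian of $\Psi_R : (x,u,u_1,\ldots,u_{d^\star}) \to y_{[R]}$ via the Hankel-like block structure from Step 1 (jet level $u_l$ enters through $\psi^j_{k_j+l},\ldots,\psi^j_{r_j}$ with leading coefficients $b^1$ and $b^2$), and using that the flat parameterization makes $(x,u)$ a function of $y_{[R]}$, a rank count shows that each increment of $l$ supplies at most two new independent $x$-directions through the iterated Lie prolongations of $b^j$ along $f$; the requirement that $\partial_x\psi$ have full rank $n$ is met exactly when $d^\star = n - k_1 - k_2$. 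Combined with $r_j = k_j + d^\star$ from Step 2, this yields $r_1 = n - k_2$ and $r_2 = n - k_1$.

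The main obstacle will be making the rank count in Step 3 fully rigorous, because one must track, at each jet level $l$, how the iterated Lie prolongations of $b^j(x,u)$ along $f$ combine with already-spanned directions to increase the $x$-content of $\partial_x\psi$ by exactly two per level. A cleaner alternative is to pass to coordinates that straighten out the state-only codistribution generated by the $\Lie_f^i\varphi^j$, so that the system decomposes into $k_1 + k_2$ integrator chains driven by $y^1_{k_1}$ and $y^2_{k_2}$ plus internal dynamics on an $(n-k_1-k_2)$-dimensional manifold; flatness of $y$ then forces recoverability of this internal state from precisely $d^\star$ further derivatives of $y$, and a dimension-matching argument on the resulting linear-algebraic system fixes $d^\star = n - k_1 - k_2$.
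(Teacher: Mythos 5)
The paper itself does not prove this lemma internally: its ``proof'' is a one-line citation of Equation (8) in \cite{GstottnerKolarSchoberl:2020b}, which already contains the identities $n-\#K=r_1-k_1=r_2-k_2$. Your attempt at a self-contained argument is therefore a genuinely different (and more ambitious) route. Your Step 1 is correct, and Step 2 is essentially sound: $y^j_{k_j+l}$ is affine in the top jet $u_l$ with nonzero coefficient $b^j=\partial_u\Lie_f^{k_j}\varphi^j$, and differentiating $F_u(y_{[R]})\equiv u$ with respect to the top jet variable does force $r_1-k_1=r_2-k_2$ (you should still dispose of the edge case $r_1-k_1=0$, where the right-hand side $u$ is not independent of the variable you differentiate by, but that is a one-line fix via a rank argument on $\partial_u$).

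The genuine gap is Step 3, and it is not just a matter of polish. The count you propose --- ``each increment of $l$ supplies at most two new independent $x$-directions'' --- cannot yield the equality $d^\star=n-k_1-k_2$: it only gives $n-k_1-k_2\le 2(d^\star+1)$, i.e.\ roughly $d^\star\ge(n-k_1-k_2)/2-1$, which is compatible with $d^\star$ being far smaller than claimed (for $n-k_1-k_2=2$ it allows $d^\star=0$, while the lemma asserts $d^\star=2$). The structural fact you are missing concerns the $2\times 2$ decoupling matrix $B=\partial_u\bigl(\Lie_f^{k_1}\varphi^1,\Lie_f^{k_2}\varphi^2\bigr)$. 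If $B$ is regular, the input jets can be eliminated level by level and the derivatives of $y$ span only the $k_1+k_2$ chain directions, so $\varphi$ can be a flat output only if $k_1+k_2=n$ and $d^\star=0$. For a flat output with $k_1+k_2<n$, $B$ must therefore have rank one, and then each differentiation level beyond the relative degrees contributes two new equations that determine exactly \emph{one} new input-jet direction and exactly \emph{one} new independent function of $x$; it is this ``exactly one state direction per level'' accounting, iterated $n-k_1-k_2$ times, that pins down $d^\star=n-k_1-k_2$. Neither your rank count nor the alternative integrator-chain decomposition you sketch establishes the rank-one property of $B$ or the per-level count, so as written Step 3 assumes what it must prove. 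Either supply these two ingredients, or do as the paper does and cite \cite{GstottnerKolarSchoberl:2020b} directly.
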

			\begin{proof}
				See Equation (8) in \cite{GstottnerKolarSchoberl:2020b}, \ie $n-\#K=r_1-k_1$ and $n-\#K=r_2-k_2$ (where $\#K=k_1+k_2$), from which readily the relations $r_1=n-k_2$ and $r_2=n-k_1$ follow.
			\end{proof}
			With these two relations, we immediately obtain a relation between the difference $d=\#R-n$ of an $(x,u)$-flat output and the relative degrees $k_j$ of its components.
			\begin{corollary}\label{cor:d}
				For an $(x,u)$-flat output $y=\varphi(x,u)$ of a two-input system of the form \eqref{eq:sys}, the relation $d=n-k_1-k_2$ holds.
			\end{corollary}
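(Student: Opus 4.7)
The plan is to observe that this corollary is essentially a one-line consequence of Lemma \ref{lem:nrk} together with the definition of the difference $d$ of a flat output. Recall that by definition $d = \#R - n = r_1 + r_2 - n$, where $R = (r_1, r_2)$ is the multi-index associated with the flat parameterization. No further structural arguments are needed beyond what is already available.

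Concretely, I would simply invoke Lemma \ref{lem:nrk}, which gives $r_1 = n - k_2$ and $r_2 = n - k_1$ for any $(x,u)$-flat output of a two-input system of the form \eqref{eq:sys}. Adding these two relations yields $r_1 + r_2 = 2n - k_1 - k_2$, and subtracting $n$ on both sides gives $\#R - n = n - k_1 - k_2$. Since $d = \#R - n$ by definition, the claimed relation $d = n - k_1 - k_2$ follows immediately.

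There is no real obstacle here; the only thing worth being careful about is ensuring that the relative degrees $k_j$ as defined in \eqref{eq:relDeg} are well-defined for every component of the $(x,u)$-flat output under consideration, but this is exactly the setting in which Lemma \ref{lem:nrk} is stated and applied, so no additional hypothesis must be verified. The proof is therefore essentially a single line of substitution, and I would present it as such without any case distinction.
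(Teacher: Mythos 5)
Your proof is correct and coincides with the paper's own argument: the paper likewise presents the corollary as an immediate consequence of Lemma \ref{lem:nrk} combined with the definition $d=\#R-n$, giving $d=(n-k_2)+(n-k_1)-n=n-k_1-k_2$. Nothing further is needed.
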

			The definition of the relative degree via \eqref{eq:relDeg} requires that $\varphi^j=\varphi^j(x)$ or $\varphi^j=\varphi^j(x,u)$ and always yields $k_j\geq 0$. However, it turns out that Corollary \ref{cor:d} (and in fact also Lemma \ref{lem:nrk}) analogously apply if one or both components of the flat output explicitly depend on derivatives of the inputs, \ie $y^j=\varphi^j(x,u,u_1,\ldots,u_{q_j})$ with $\varphi^j$ explicitly depending on $u^1_{q_j}$ or $u^2_{q_j}$, by setting $k_j=-q_j$. In other words, Corollary \ref{cor:d} analogously applies if we interpret an explicit dependence of $y^j=\varphi^j(x,u,u_1,\ldots,u_{q_j})$ on the $q_j$-th derivative of an input as a negative relative degree of $k_j=-q_j$. This can be shown as follows. Let $y$ be a flat output of the system \eqref{eq:sys} and consider the prolonged system
			\begin{align}\label{eq:sysProlonged}
				\begin{aligned}
					\dot{x}&=f(x,u)&&&\dot{u}&=u_1&&&\dot{u}_1&=u_2&&&\ldots&&&&\dot{u}_{p-1}&=u_p\,,
				\end{aligned}
			\end{align}
			with the state $\tilde{x}=(x,u,u_1,\ldots,u_{p-1})$, $\Dim{\tilde{x}}=\tilde{n}=n+2p$ and the input $\tilde{u}=u_p$, obtained by prolonging both inputs of the system $p$ times. The flat parameterization $\tilde{x}=F_{\tilde{x}}(y_{[\tilde{R}-1]})$, $\tilde{u}=F_{\tilde{u}}(y_{[\tilde{R}]})$ of the prolonged system with respect to $y$ is easily obtained from the corresponding flat parameterization of the original system by successive differentiation (we thus have $\tilde{R}=R+p$) and it immediately follows that we have $\tilde{d}=\#\tilde{R}-\tilde{n}=\#R-n=d$, \ie such total prolongations preserve the difference of every flat output.
			
			Let us consider the case that one component of the flat output explicitly depends on derivatives of the inputs. Without loss of generality (swap the components of the flat output if necessary), we can assume that $y^1=\varphi^1(x,u)$ has a relative degree of $k_1\geq 0$ (we have $k_1=0$ if $\varphi^1$ depends explicitly on $u^1$ or $u^2$) and $y^2=\varphi^2(x,u,u_1,\ldots,u_{q_2})$ (with $\varphi^2$ explicitly depending on $u^1_{q_2}$ or $u^2_{q_2}$). By prolonging both inputs of the system $q_2$ times, we obtain the system
			\begin{align*}
				\begin{aligned}
					\dot{x}&=f(x,u)&&&\dot{u}&=u_1&&&\dot{u}_1&=u_2&&&&\ldots&&&\dot{u}_{q_2-1}&=u_{q_2}\,,
				\end{aligned}
			\end{align*}
			with the state $\tilde{x}=(x,u,u_1,\ldots,u_{q_2-1})$, $\Dim{\tilde{x}}=\tilde{n}=n+2q_2$ and the input $\tilde{u}=u_{q_2}$. For this prolonged system, the components of the flat output $y=(\varphi^1(x,u),\varphi^2(x,u,u_1,\ldots,u_{q_2}))$ only depend on the state and the inputs and thus, Corollary \ref{cor:d} directly applies. By construction, we have $\tilde{k}_2=0$, since $\varphi^2$ explicitly depends on $\tilde{u}$. The $k_1$-th derivative of $\varphi^1$ explicitly depends on $u$ and therefore, we have to differentiate it another $q_2$ times until it explicitly depends on $\tilde{u}$ and thus, $\tilde{k}_1=k_1+q_2$. According to Corollary \ref{cor:d}, we thus have $\tilde{d}=\tilde{n}-\tilde{k}_1-\tilde{k}_2=n+2q_2-(k_1+q_2)-0=n-k_1+q_2$. Above we noticed that $d=\tilde{d}$, \ie total prolongations preserve the difference and thus, $d=n-k_1+q_2$, \ie exactly the same as Corollary \ref{cor:d} would yield when we set $k_2=-q_2$.
			
			The case that both components of the flat output explicitly depend on derivatives of the inputs, \ie $y^j=\varphi^j(x,u,u_1,\ldots,u_{q_j})$ (with $\varphi^j$ explicitly depending on $u^1_{q_j}$ or $u^2_{q_j}$), can be handled analogously and yields $d=n+q_1+q_2$, \ie exactly the same as Corollary \ref{cor:d} would yield when we set $k_1=-q_1$ and $k_2=-q_2$.\\
			
			With these preliminary results at hand, we can now proof that minimal flat outputs with $d\leq 2$ of a two-input system of the form \eqref{eq:sys} are actually $(x,u)$-flat outputs. Let $y=(\varphi^1(x,u,u_1,\ldots,u_{q_1}),\varphi^2(x,u,u_1,\ldots,u_{q_2}))$ be a minimal flat output of \eqref{eq:sys} with $d\leq 2$. In the following we show that $q_1,q_2\leq 0$ by contradiction. Without loss of generality, we can assume that $q_1\leq q_2$ (swap the components of the flat output if necessary). Assume that $q_2\geq 1$. According to the above discussed generalization of Corollary \ref{cor:d}, we have $d=n+q_1+q_2$ and thus $n+q_1+q_2\leq 2$. Because of $n+q_1+q_2\leq 2$, we have $q_1\leq 2-n-q_2$. We can assume $n\geq 3$, since for $n=2$, the system would be static feedback linearizable with the state of the system forming a linearizing output due to the assumption $\Rank{\partial_uf}=2$, which is in contradiction to the minimality of $y$, for $n=1$, the rank condition $\Rank{\partial_uf}=2$ could not hold. Thus, $n\geq 3$ and in turn, the first component of the flat output has a (positive) relative degree of $k_1=-q_1\geq n+q_2-2$. This enables us to replace $k_1$ states of the system by $\varphi^1$ and its first $k_1-1$ derivatives by applying the state transformation
			\begin{align}\label{eq:stateTrans}
				\begin{aligned}
					\bar{x}_1^{i_1}&=\Lie_f^{i_1-1}\varphi^1(x)\,,&i_1&=1,\ldots,k_1\\
					\bar{x}_2^{i_2}&=\psi^{i_2}(x)\,,&i_2&=1,\ldots,n-k_1\,,
				\end{aligned}
			\end{align}
			where $\psi^{i_2}$ are arbitrary functions of the state, chosen such that \eqref{eq:stateTrans} is a regular state transformation. By additionally applying the input transformation $\bar{u}^1=\Lie_f^{k_1}\varphi^1$, $\bar{u}^2=g(x,u)$, with $g$ chosen such that this transformation is invertible with respect to $u$, we obtain
			\begin{align}\label{eq:statesReplaced}
				\begin{aligned}
					\dot{\bar{x}}_1^1&=\bar{x}_1^2\\
					\dot{\bar{x}}_1^2&=\bar{x}_1^3\\
					&\vdotswithin{=}\\
					\dot{\bar{x}}_1^{k_1}&=\bar{u}^1\\
					\dot{\bar{x}}_2^{i_2}&=\bar{f}_2^{i_2}(\bar{x}_1,\bar{x}_2,\bar{u}^1,\bar{u}^2)\,,&i_2&=1,\ldots,n-k_1\,.
				\end{aligned}
			\end{align}
			Recall that we have $k_1\geq n+q_2-2$ and thus $\Dim{\bar{x}_2}=n-k_1\leq 2-q_2$. Due to the assumption $q_2\geq 1$, we thus have $\Dim{\bar{x}_2}\leq 1$. However, for $\Dim{\bar{x}_2}=1$, \eqref{eq:statesReplaced} would read
			\begin{align*}
				\begin{aligned}
					\dot{\bar{x}}_1^1&=\bar{x}_1^2\\
					\dot{\bar{x}}_1^2&=\bar{x}_1^3\\
					&\vdotswithin{=}\\
					\dot{\bar{x}}_1^{n-1}&=\bar{u}^1\\
					\dot{\bar{x}}_2^{1}&=\bar{f}_2^{1}(\bar{x}_1,\bar{x}_2,\bar{u}^1,\bar{u}^2)\,,
				\end{aligned}
			\end{align*}
			so the system would be static feedback linearizable with $(\bar{x}_1^1,\bar{x}_2^1)$ forming a linearizing output, which is in contradiction to the minimality of the flat output $y$. For $\Dim{\bar{x}_2}=0$, the system would consist of a single integrator chain of length $n$, which actually contradicts with $\Rank{\partial_uf}=2$. We thus have $q_2\leq 0$ and because of $q_1\leq q_2$, also $q_1\leq 0$. In conclusion, every minimal flat output with a difference of $d\leq 2$ is an $(x,u)$-flat output. Lemma \ref{lem:linearizationXUflat} therefore guarantees that a system with $d\leq 2$ can be rendered static feedback linearizable by $d$-fold prolonging a suitable chosen (new) input after a suitable input transformation $\bar{u}=\Phi_u(x,u)$ has been applied, which completes the proof.
		\subsection{The Special Case $k_1=1$ with $\Delta_i$ of the Form \eqref{eq:d2twoFoldProlongedDistributionsSimplifiedCase2}}
			By assumption we have $k_1=1$ and $\Ad_{\bar{f}}^2\partial_{\bar{u}^2}\in\Span{\partial_{\bar{u}^1_2},\partial_{\bar{u}^1_1},\partial_{\bar{u}^1},\partial_{\bar{u}^2},[\bar{f},\partial_{\bar{u}^2}]}$, \ie the distribution $D_1$ is non-involutive and the involutive distributions $\Delta_i$ are of the form \eqref{eq:d2twoFoldProlongedDistributionsSimplifiedCase2}. It follows that we necessarily have $D_0\not\subset\C{D_1}$. Indeed, $D_0\subset\C{D_1}$ would imply $[\partial_{\bar{u}^1},[\bar{f},\partial_{\bar{u}^2}]]\in D_1$ and in turn either $\Delta_2\cap\TXU=D_1$, which contradicts with $D_1$ being non-involutive, or $\Delta_2=\Span{\partial_{\bar{u}^1_2},\partial_{\bar{u}^1_1},\partial_{\bar{u}^1},\partial_{\bar{u}^2},[\bar{f},\partial_{\bar{u}^2}]}$ and then, due to $\Ad_{\bar{f}}^2\partial_{\bar{u}^2}\in\Span{\partial_{\bar{u}^1_2},\partial_{\bar{u}^1_1},\partial_{\bar{u}^1},\partial_{\bar{u}^2},[\bar{f},\partial_{\bar{u}^2}]}$, $\Delta_3\cap\TXU=D_1$ would follow, which again contradicts with $D_1$ being non-involutive. Thus, $[\partial_{\bar{u}^1},[\bar{f},\partial_{\bar{u}^2}]]\notin D_1$ and therefore $D_0\not\subset\C{D_1}$. 
			
			Thus, we have to show that the system necessarily meets item \ref{d2:2b}, \ie that there exists a non-vanishing vector field $v_c\in D_0$ such that $v_c\in\C{E_1}$ where $E_1$ is defined as $E_1=D_0+\Span{[v_c,\bar{f}]}$. Let us show that the vector field $v_c=\partial_{\bar{u}^2}$ meets this criterion. The vector field $v_c=\partial_{\bar{u}^2}$ meets $\partial_{\bar{u}^2}\in D_0$ and yields $E_1=D_0+\Span{[\bar{f},\partial_{\bar{u}^2}]}=\Span{\partial_{\bar{u}^1},\partial_{\bar{u}^2},[\bar{f},\partial_{\bar{u}^2}]}$. The involutivity of $\Delta_1$ implies that $\partial_{\bar{u}^2}\in\C{E_1}$. Therefore, $v_c=\partial_{\bar{u}^2}$ and $E_1=\Span{\partial_{\bar{u}^1},\partial_{\bar{u}^2},[\bar{f},\partial_{\bar{u}^2}]}$. Above, we have deduced that $[\partial_{\bar{u}^1},[\bar{f},\partial_{\bar{u}^2}]]\notin D_1$, which because of $E_1\subset D_1$ implies that $[\partial_{\bar{u}^1},[\bar{f},\partial_{\bar{u}^2}]]\notin E_1$ and thus, $E_1$ is non-involutive, so we have to show next that item \ref{d2:3a} is necessarily met. 
			
			We have $E_1\subset\Delta_2\cap\TXU$ (see \eqref{eq:d2twoFoldProlongedDistributionsSimplifiedCase2}), from which the involutive closure of $E_1$ follows as $\overline{E}_1=\Delta_2\cap\TXU=\Span{\partial_{\bar{u}^1},\partial_{\bar{u}^2},[\bar{f},\partial_{\bar{u}^2}],[\partial_{\bar{u}^1},[\bar{f},\partial_{\bar{u}^2}]]}$ and thus \ref{d2:3a}\ref{d2:3a1} is met. Note that we necessarily have $\overline{E}_1\neq\TXU$. Indeed, since $\Dim{E_1}=3$ and $\Dim{\overline{E}_1}=4$, $\overline{E}_1=\TXU$ would imply $n=2$ and thus $D_1=\TXU$, which is in contradiction with $D_{k_1}$ being non-involutive. Therefore, we necessarily have $\overline{E}_1\neq\TXU$, based on which we next show that necessarily \ref{d2:3a}\ref{d2:3a2} is met, \ie that necessarily $\Dim{[\bar{f},E_1]+\overline{E}_1}=\Dim{\overline{E}_1}+1$. Since $\Ad_{\bar{f}}^2\partial_{\bar{u}^2}\in\Delta_2\cap\TXU=\overline{E}_1$, the condition holds if and only if $[\bar{f},\partial_{\bar{u}^1}]\notin\overline{E}_1$, which can be shown by contradiction. Assume that $[\bar{f},\partial_{\bar{u}^1}]\in\overline{E}_1$ and thus $[\bar{f},E_1]+\overline{E}_1=\overline{E}_1$. Based on the Jacobi identity, it can then be shown that this would imply $[\bar{f},\overline{E}_1]\subset\overline{E}_1$. However, since $\overline{E}_1\neq\TXU$, this would in turn imply that the system contains an autonomous subsystem, which is in contradiction with the assumption that the system is flat.
			
			With $F_{2}=\overline{E}_1=\Delta_2\cap\TXU$, it follows that the distributions $F_i=F_{i-1}+[\bar{f},F_{i-1}]$, $i\geq 3$ constructed in item \ref{d2:5} and the distributions $\Delta_i$ are related via $\Delta_i=\Span{\partial_{\bar{u}^1_2},\partial_{\bar{u}^1_1}}+F_i$ and thus
			\begin{align*}
				\begin{aligned}
					F_3&=\Delta_3\cap\TXU\\
					&\vdotswithin{=}\\
					F_s&=\Delta_s\cap\TXU=\TXU\,.
				\end{aligned}
			\end{align*}
			So these distributions are indeed involutive and there indeed exists an integer $s$ such that $E_s=\TXU$, which completes the necessity part of the proof for the special case $k_1=1$ with $\Ad_{\bar{f}}^2\partial_{\bar{u}^2}\in\Span{\partial_{\bar{u}^1_2},\partial_{\bar{u}^1_1},\partial_{\bar{u}^1},\partial_{\bar{u}^2},[\bar{f},\partial_{\bar{u}^2}]}$.
	\section{Proofs of Lemmas}\label{ap:lemmas}
		\subsection{Proof of Lemma \ref{lem:d2SimplificationOfDistributions}}
			We have $[f_p,\partial_{\bar{u}^j}]=-\partial_{\bar{u}^j}\bar{f}^i(x,\bar{u})\partial_{x^i}=[\bar{f},\partial_{\bar{u}^j}]$ with $\bar{f}=\bar{f}^i(x,\bar{u})\partial_{x^i}$ and thus $\Delta_1=\Span{\partial_{\bar{u}^1_2},\partial_{\bar{u}^1_1},\partial_{\bar{u}^2},[\bar{f},\partial_{\bar{u}^2}]}$ and in turn
			\begin{align*}
				\begin{aligned}
					\Delta_2&=\Span{\partial_{\bar{u}^1_2},\partial_{\bar{u}^1_1},\partial_{\bar{u}^1},\partial_{\bar{u}^2},[\bar{f},\partial_{\bar{u}^2}],[f_p,[\bar{f},\partial_{\bar{u}^2}]]}\,.
				\end{aligned}
			\end{align*}
			We have $[f_p,[\bar{f},\partial_{\bar{u}^2}]]=\Ad_{\bar{f}}^2\partial_{\bar{u}^2}+\bar{u}^1_1[\partial_{\bar{u}^1},[\bar{f},\partial_{\bar{u}^2}]]+\bar{u}^1_2[\partial_{\bar{u}^1_1},[\bar{f},\partial_{\bar{u}^2}]]=\Ad_{\bar{f}}^2\partial_{\bar{u}^2}+\bar{u}^1_1[\partial_{\bar{u}^1},[\bar{f},\partial_{\bar{u}^2}]]\Mod\Delta_1$. The involutivity of $\Delta_2$ implies that $\Ad_{\bar{f}}^2\partial_{\bar{u}^2}$ and $[\partial_{\bar{u}^1},[\bar{f},\partial_{\bar{u}^2}]]$ are collinear modulo $\Span{\partial_{\bar{u}^1_2},\partial_{\bar{u}^1_1},\partial_{\bar{u}^1},\partial_{\bar{u}^2},[\bar{f},\partial_{\bar{u}^2}]}$. In case of $k_1\geq 2$, we certainly have $\Ad_{\bar{f}}^2\partial_{\bar{u}^2}\notin\Span{\partial_{\bar{u}^1_2},\partial_{\bar{u}^1_1},\partial_{\bar{u}^1},\partial_{\bar{u}^2},[\bar{f},\partial_{\bar{u}^2}]}$. Indeed, assume that $k_1\geq 2$ and $\Ad_{\bar{f}}^2\partial_{\bar{u}^2}\in\Span{\partial_{\bar{u}^1_2},\partial_{\bar{u}^1_1},\partial_{\bar{u}^1},\partial_{\bar{u}^2},[\bar{f},\partial_{\bar{u}^2}]}$. We then have
			\begin{align*}
				\Delta_2&=\Span{\partial_{\bar{u}^1_2},\partial_{\bar{u}^1_1},\partial_{\bar{u}^1},\partial_{\bar{u}^2},[\bar{f},\partial_{\bar{u}^2}],\underbrace{[\partial_{\bar{u}^1},[\bar{f},\partial_{\bar{u}^2}]]}_{\in D_1}}
			\end{align*}
			(where $[\partial_{\bar{u}^1},[\bar{f},\partial_{\bar{u}^2}]]\in D_1$ due to the involutivity of $D_1$). Thus, either $\Delta_2=\Span{\partial_{\bar{u}^1_2},\partial_{\bar{u}^1_1}}+D_1$ or $\Delta_2=\Span{\partial_{\bar{u}^1_2},\partial_{\bar{u}^1_1}}+\Span{\partial_{\bar{u}^1},\partial_{\bar{u}^2},[\bar{f},\partial_{\bar{u}^2}]}$. It follows that the first case would lead to $\Delta_i=\Span{\partial_{\bar{u}^1_2},\partial_{\bar{u}^1_1}}+D_{i-1}$ which for $i=k_1+1$ would imply that $D_{k_1}$ is involutive, contradicting with $D_{k_1}$ being non-involutive. The second case, \ie $\Delta_2=\Span{\partial_{\bar{u}^1_2},\partial_{\bar{u}^1_1}}+\Span{\partial_{\bar{u}^1},\partial_{\bar{u}^2},[\bar{f},\partial_{\bar{u}^2}]}$ would lead to $\Delta_3=\Span{\partial_{\bar{u}^1_2},\partial_{\bar{u}^1_1}}+D_1$ and in turn $\Delta_i=\Span{\partial_{\bar{u}^1_2},\partial_{\bar{u}^1_1}}+D_{i-2}$ which for $i=k_1+2$ would imply that $D_{k_1}$ is involutive, again contradicting with $D_{k_1}$ being non-involutive. Thus, in case of $k_1\geq 2$, we always have $\Ad_{\bar{f}}^2\partial_{\bar{u}^2}\notin\Span{\partial_{\bar{u}^1_2},\partial_{\bar{u}^1_1},\partial_{\bar{u}^1},\partial_{\bar{u}^2},[\bar{f},\partial_{\bar{u}^2}]}$, and thus
			\begin{align}\label{eq:delta2case1}
				\begin{aligned}
					\Delta_2&=\Span{\partial_{\bar{u}^1_2},\partial_{\bar{u}^1_1},\partial_{\bar{u}^1},\partial_{\bar{u}^2},[\bar{f},\partial_{\bar{u}^2}],\Ad_{\bar{f}}^2\partial_{\bar{u}^2}}\,.
				\end{aligned}
			\end{align}
			The form \eqref{eq:d2twoFoldProlongedDistributionsSimplifiedCase1} then easily follows from the involutivity of the distributions $\Delta_2,\Delta_3,\ldots$ and the fact that $f_p$ and $\bar{f}$ coincide modulo $\Span{\partial_{\bar{u}^1_2},\partial_{\bar{u}^1_1}}\subset\Delta_2,\Delta_3,\ldots$
		
			However, for $k_1=1$, it can indeed happen that $\Ad_{\bar{f}}^2\partial_{\bar{u}^2}\in\Span{\partial_{\bar{u}^1_2},\partial_{\bar{u}^1_1},\partial_{\bar{u}^1},\partial_{\bar{u}^2},[\bar{f},\partial_{\bar{u}^2}]}$ and thus
			\begin{align}\label{eq:delta2case2}
				\begin{aligned}
					\Delta_2&=\Span{\partial_{\bar{u}^1_2},\partial_{\bar{u}^1_1},\partial_{\bar{u}^1},\partial_{\bar{u}^2},[\bar{f},\partial_{\bar{u}^2}],[\partial_{\bar{u}^1},[\bar{f},\partial_{\bar{u}^2}]]}\,
				\end{aligned}
			\end{align}
			(in which case we necessarily have $[\partial_{\bar{u}^1},[\bar{f},\partial_{\bar{u}^2}]]\notin D_1$). In this case, due to the involutivity of the distributions $\Delta_2,\Delta_3,\ldots$ and the fact that $f_p$ and $\bar{f}$ coincide modulo $\Span{\partial_{\bar{u}^1_2},\partial_{\bar{u}^1_1}}\subset\Delta_2,\Delta_3,\ldots$ the form \eqref{eq:d2twoFoldProlongedDistributionsSimplifiedCase2} follows.
		\subsection{Proof of Lemma \ref{lem:d2case1FirstDerivedFlag}}
			Under the assumption $d=2$ with $D_{k_1-1}\subset\C{D_{k_1}}$, the distribution $\mathcal{I}_{k_1}=\Delta_{k_1+1}\cap D_{k_1}$ is involutive, which can be shown by contradiction. Assume that $\mathcal{I}_{k_1}=\Delta_{k_1+1}\cap D_{k_1}$ is non-involutive. From \eqref{eq:d2twoFoldProlongedDistributionsSimplifiedCase1}, it follows that we have\footnote{It is immediate that $\Ad_{\bar{f}}^{k_1+1}\notin D_{k_1}$, as it would either lead to $\Delta_{k_1+1}\cap\TXU=D_{k_1}$ or $\Delta_{k_1+2}\cap\TXU=D_{k_1}$, which contradicts with $D_{k_1}$ being non-involutive.}
			\begin{align*}
				\begin{aligned}
					\mathcal{I}_{k_1}=\Delta_{k_1+1}\cap D_{k_1}=D_{k_1-1}+\Span{\Ad_{\bar{f}}^{k_1}\partial_{\bar{u}^2}}\,.
				\end{aligned}
			\end{align*}
			Since $\mathcal{I}_{k_1}\subset\Delta_{k_1+1}\cap\TXU$ and
			\begin{align*}
				\begin{aligned}
					\Delta_{k_1+1}\cap\TXU&=D_{k_1-1}+\Span{\Ad_{\bar{f}}^{k_1}\partial_{\bar{u}^2},\Ad_{\bar{f}}^{k_1+1}\partial_{\bar{u}^2}}\,,
				\end{aligned}
			\end{align*}
			is involutive, we then have $\overline{\mathcal{I}}_{k_1}=\Delta_{k_1+1}\cap\TXU$, \ie $\mathcal{I}_{k_1}\underset{1}{\subset}\overline{\mathcal{I}}_{k_1}$. We have $\mathcal{I}_{k_1}\underset{1}{\subset}D_{k_1}$ and since $D_{k_1-1}\subset\C{D_{k_1}}$, we have $[D_{k_1-1},\mathcal{I}_{k_1}]\subset D_{k_1}$. However, since $\mathcal{I}_{k_1}$ is assumed to be non-involutive, we necessarily have $[D_{k_1-1},\mathcal{I}_{k_1}]\not\subset\mathcal{I}_{k_1}$ and thus, the direction which completes $\mathcal{I}_{k_1}$ to its involutive closure is contained in $D_{k_1}$, which would lead to $\overline{\mathcal{I}}_{k_1}=D_{k_1}$ and contradict with $D_{k_1}$ being non-involutive. Thus, $\mathcal{I}_{k_1}$ is indeed involutive.\\
			
			From the Jacobi identity
			\begin{align*}
				\begin{aligned}
					\underbrace{[\Ad_{\bar{f}}^{k_1-1}\partial_{\bar{u}^1},[\bar{f},\Ad_{\bar{f}}^{k_1}\partial_{\bar{u}^2}]]}_{\in\Delta_{k_1+1}\cap\TXU\subset D_{k_1}+\Span{\Ad_{\bar{f}}^{k_1+1}\partial_{\bar{u}^2}}}+\underbrace{[\Ad_{\bar{f}}^{k_1}\partial_{\bar{u}^2},[\Ad_{\bar{f}}^{k_1-1}\partial_{\bar{u}^1},\bar{f}]]}_{\in D^{(1)}_{k_1}}+\underbrace{[\bar{f},\overbrace{[\Ad_{\bar{f}}^{k_1}\partial_{\bar{u}^2},\Ad_{\bar{f}}^{k_1-1}\partial_{\bar{u}^1}]}^{\in\mathcal{I}_{k_1}}]}_{\in D_{k_1}+\Span{\Ad_{\bar{f}}^{k_1+1}\partial_{\bar{u}^2}}}&=0\,,
				\end{aligned}
			\end{align*}
			where we have used the involutivity of $\mathcal{I}_{k_1}$ for the third term, it follows that the vector field $[\Ad_{\bar{f}}^{k_1}\partial_{\bar{u}^1},\Ad_{\bar{f}}^{k_1}\partial_{\bar{u}^2}]$, which completes $D_{k_1}$ to $D_{k_1}^{(1)}$, is contained in $D_{k_1}+\Span{\Ad_{\bar{f}}^{k_1+1}\partial_{\bar{u}^2}}$, and thus $D^{(1)}_{k_1}=D_{k_1}+\Span{\Ad_{\bar{f}}^{k_1+1}\partial_{\bar{u}^2}}$, as desired.
		\subsection{Proof of Lemma \ref{lem:cuachyEk2}}
			The vector field $\Ad_{\bar{f}}^{k_2-1}\partial_{\bar{u}^2}\in E_{k_2-1}$ is a Cauchy characteristic vector field of $E_{k_2}$. Indeed, we have (see \eqref{eq:E} and \eqref{eq:DeltaErelation})
			\begin{align}\label{eq:Ek2}
				\begin{aligned}
					E_{k_2}&=\underbrace{E_{k_2-1}+\Span{\Ad_{\bar{f}}^{k_2}\partial_{\bar{u}^2}}}_{\Delta_{k_2}\cap\TXU}+\Span{\Ad_{\bar{f}}^{k_2-1}\partial_{\bar{u}^1}}\,.
				\end{aligned}
			\end{align}
			From the Jacobi identity
			\begin{align*}
				\begin{aligned}
					[\Ad_{\bar{f}}^{k_2-1}\partial_{\bar{u}^2},[\bar{f},\Ad_{\bar{f}}^{k_2-2}\partial_{\bar{u}^1}]]+\underbrace{[\Ad_{\bar{f}}^{k_2-2}\partial_{\bar{u}^1},[\Ad_{\bar{f}}^{k_2-1}\partial_{\bar{u}^2},\bar{f}]]}_{\in\Delta_{k_2}\cap\TXU\subset E_{k_2}}+\underbrace{[\bar{f},\overbrace{[\Ad_{\bar{f}}^{k_2-2}\partial_{\bar{u}^1},\Ad_{\bar{f}}^{k_2-1}\partial_{\bar{u}^2}]}^{\in E_{k_2-1}}]}_{\in E_{k_2}}&=0\,,
				\end{aligned}
			\end{align*}
			it follows that $[\Ad_{\bar{f}}^{k_2-1}\partial_{\bar{u}^2},\Ad_{\bar{f}}^{k_2-1}\partial_{\bar{u}^1}]\in E_{k_2}$, where we have used the fact that $[\bar{f},E_{k_2-1}]\subset E_{k_2}$ and that $\Delta_{k_2}\cap\TXU$ and $E_{k_2-1}$ are involutive. The involutivity of $\Delta_{k_2}\cap\TXU$ together with $[\Ad_{\bar{f}}^{k_2-1}\partial_{\bar{u}^2},\Ad_{\bar{f}}^{k_2-1}\partial_{\bar{u}^1}]\in E_{k_2}$ implies that $\Ad_{\bar{f}}^{k_2-1}\partial_{\bar{u}^2}\in\C{E_{k_2}}$.\\
			
			The distribution $E_{k_2}$ is by assumption non-involutive. Since $E_{k_2}\subset\Delta_{k_2+1}\cap\TXU$ and $\Delta_{k_2+1}\cap\TXU$ is involutive, it follows that $\overline{E}_{k_2}=\Delta_{k_2+1}\cap\TXU=E_{k_2}+\Span{\Ad_{\bar{f}}^{k_2+1}\partial_{\bar{u}^2}}$.\\

			In case of $k_2\geq k_1+2$, it immediately follows from the Jacobi identity and the way the distributions $E_i$ are constructed that $E_{k_2-2}\subset\C{E_{k_2}}$. We have $E_{k_2-1}=E_{k_2-2}+\Span{\Ad_{\bar{f}}^{k_2-2}\partial_{\bar{u}^1},\Ad_{\bar{f}}^{k_2-1}\partial_{\bar{u}^2}}$ and we have just shown that $\Ad_{\bar{f}}^{k_2-1}\partial_{\bar{u}^2}\in\C{E_{k_2}}$. Since by assumption $E_{k_2-1}\not\subset\C{E_{k_2}}$, it follows that $\Ad_{\bar{f}}^{k_2-2}\partial_{\bar{u}^1}\notin\C{E_{k_2}}$. Since $\Ad_{\bar{f}}^{k_2-2}\partial_{\bar{u}^1}$ is contained in the involutive subdistribution $\Delta_{k_2}\cap\TXU\underset{1}{\subset}E_{k_2}$ (see \eqref{eq:Ek2}), it follows that the vector field $[\Ad_{\bar{f}}^{k_2-2}\partial_{\bar{u}^1},\Ad_{\bar{f}}^{k_2-1}\partial_{\bar{u}^1}]$ cannot be contained in $E_{k_2}$ and thus, $\overline{E}_{k_2}=E_{k_2}+\Span{[\Ad_{\bar{f}}^{k_2-2}\partial_{\bar{u}^1},\Ad_{\bar{f}}^{k_2-1}\partial_{\bar{u}^1}]}$, which in turn implies that the vector fields $[\Ad_{\bar{f}}^{k_2}\partial_{\bar{u}^2},\Ad_{\bar{f}}^{k_2-1}\partial_{\bar{u}^1}]$ and $[\Ad_{\bar{f}}^{k_2-2}\partial_{\bar{u}^1},\Ad_{\bar{f}}^{k_2-1}\partial_{\bar{u}^1}]$ are collinear modulo $E_{k_2}$, \ie $[\Ad_{\bar{f}}^{k_2}\partial_{\bar{u}^2},\Ad_{\bar{f}}^{k_2-1}\partial_{\bar{u}^1}]=\lambda [\Ad_{\bar{f}}^{k_2-2}\partial_{\bar{u}^1},\Ad_{\bar{f}}^{k_2-1}\partial_{\bar{u}^1}]\Mod E_{k_2}$. From the latter, it follows that $[\Ad_{\bar{f}}^{k_2}\partial_{\bar{u}^2}-\lambda\Ad_{\bar{f}}^{k_2-2}\partial_{\bar{u}^1},\Ad_{\bar{f}}^{k_2-1}\partial_{\bar{u}^1}]\in E_{k_2}$, which because of $\Ad_{\bar{f}}^{k_2}\partial_{\bar{u}^2}-\lambda\Ad_{\bar{f}}^{k_2-2}\partial_{\bar{u}^1}\in\Delta_{k_2}\cap\TXU$ implies that $\Ad_{\bar{f}}^{k_2}\partial_{\bar{u}^2}-\lambda\Ad_{\bar{f}}^{k_2-2}\partial_{\bar{u}^1}\in\C{E_{k_2}}$. It follows that $\C{E_{k_2}}=E_{k_2-2}+\Span{\Ad_{\bar{f}}^{k_2-1}\partial_{\bar{u}^2},\Ad_{\bar{f}}^{k_2}\partial_{\bar{u}^2}-\lambda\Ad_{\bar{f}}^{k_2-2}\partial_{\bar{u}^1}}\underset{2}{\subset}E_{k_2}$, \ie further Cauchy characteristic vector fields which are linearly independent of those we have already found cannot exist, otherwise $E_{k_2}$ would be involutive.\\
			
			In case of $k_2=k_1+1$, the distribution $E_{k_2-2}$ does not exist. However, we have the involutive distribution $\Delta_{k_1}\cap D_{k_1-1}=D_{k_1-2}+\Span{\Ad_{\bar{f}}^{k_1-1}\partial_{\bar{u}^2}}$. Recall that we have $E_{k_1}=D_{k_1-1}+\Span{\Ad_{\bar{f}}^{k_1}\partial_{\bar{u}^2}}$. Thus, it follows that $[\bar{f},\Delta_{k_1}\cap D_{k_1-1}]\subset E_{k_1}$. Based on the latter, by applying the Jacobi identity
			, it can be shown that $\Delta_{k_1}\cap D_{k_1-1}\subset\C{E_{k_2}}$. Repeating the above proof with $E_{k_2-2}$ replaced by $D_{k_1-1}\cap\Delta_{k_1}$ gives the desired result that in case of $k_2=k_1+1$ we have $\C{E_{k_2}}=D_{k_1-1}\cap\Delta_{k_1}+\Span{\Ad_{\bar{f}}^{k_2-1}\partial_{\bar{u}^2},\Ad_{\bar{f}}^{k_2}\partial_{\bar{u}^2}-\lambda\Ad_{\bar{f}}^{k_2-2}\partial_{\bar{u}^1}}$\,.
		\subsection{Proof of Lemma \ref{lem:missingDistributions}}
			By assumption, we have $G_{k-1}\subset\C{G_k}$, which actually implies $G_{k-1}=\C{G_k}$. Indeed, since $G_k$ is non-involutive, we necessarily have $\Dim{\C{G_k}}\leq\Dim{G_k}-2$. Because of $G_{k-1}\subset\C{G_k}$ and $G_{k-1}\underset{2}{\subset}G_k$, we thus have $G_{k-1}=\C{G_k}$. Apply a change of coordinates $(z_3,z_2,z_1)=\Phi(x,u)$ such that $G_{k-1}$ and $\overline{G}_k$ get straightened out simultaneously, \ie such that in the new coordinates we have 
			\begin{align*}
				G_{k-1}&=\Span{\partial_{z_1}}\,,\\
				\overline{G}_k&=\Span{\partial_{z_1},\partial_{z_2}}\,.
			\end{align*}
			Since $G_{k-1}=\C{G_k}$, in these coordinates, there exists a basis for $G_k$ of the form $G_k=G_{k-1}+\Span{v_1,v_2}$ with the two vector fields $v_j$ being of the form $v_j=v_j^{i_2}(z_3,z_2)\partial_{z_2^{i_2}}$, $i_2=1,\ldots,\Dim{z_2}$ (where $\Dim{z_2}=3$).\\
			
			In case of $\overline{G}_k=\TXU$ (we have  $\Dim{z_3}=0$ and thus simply $v_j=v_j^{i_2}(\bar{x}_2)\partial_{\bar{x}_2^{i_2}}$ in this case), choose any function $\psi(z_2)$, $\D\psi\neq 0$ and define the vector field $v_c=(\D\psi\rfloor v_2)v_1-(\D\psi\rfloor v_1)v_2$, which is non-zero due to the non-involutivity of $G_k$ (indeed, $\D\psi\rfloor v_1=\D\psi\rfloor v_2\equiv 0$ would yield $G_k^\perp=\Span{\D\psi}$ which contradicts with $G_k$ being non-involutive). The distribution $H_k=G_{k-1}+\Span{v_c}$ is obviously involutive and we by construction have $G_{k-1}\underset{1}{\subset}H_k\underset{1}{\subset}G_k$. Note that there exist infinitely many valid choices for the function $\psi$ and different choices yield in general different distributions $H_k$.\footnote{Note that there is no need to explicitly construct the vector fields $v_j=v_j^{i_2}(\bar{x}_2)\partial_{\bar{x}_2^{i_2}}$, $j=1,2$ for deriving the distribution $H_k$ corresponding to a certain choice $\psi$. Indeed, any vector fields $w_1,w_2$ which complete $G_{k-1}$ to $G_k$ can be written as a linear combination $w_j=\beta_j^1v_1+\beta_j^2v_2\Mod G_{k-1}$ with some functions $\beta_j^i=\beta_j^i(z_2,z_1)$ and $\beta_1^1\beta_2^2-\beta_1^2\beta_2^1\neq 0$. A straight forward calculation shows that $(\D\psi\rfloor w_2)w_1-(\D\psi\rfloor w_1)w_2=(\beta_1^1\beta_2^2-\beta_1^2\beta_2^1)((\D\psi\rfloor v_2)v_1-(\D\psi\rfloor v_1)v_2)\Mod G_{k-1}$ and thus, $\tilde{H}_k=G_{k-1}+\Span{(\D\psi\rfloor w_2)w_1-(\D\psi\rfloor w_1)w_2}=D_{k_1-1}+\Span{(\D\psi\rfloor v_2)v_1-(\D\psi\rfloor v_1)v_2}=H_k$, \ie the particular choice for the vector fields $w_1,w_2$ has no effect as long as they complete $G_{k-1}$ to $G_k$.}\\
			
			Next, let us consider the case $\overline{G}_k\neq\TXU$. Note that because of $[f,G_{k-1}]\subset G_k\subset\overline{G}_k$, in the above introduced coordinates the vector field $f$ is of the form
			\begin{align*}
				f&=f_3^{i_3}(z_3,z_2)\partial_{z_3^{i_3}}+f_2^{i_2}(z_3,z_2,z_1)\partial_{z_2^{i_2}}+f_1^{i_1}(z_3,z_2,z_1)\partial_{z_1^{i_1}}\,.
			\end{align*}
			We thus have
			\begin{align*}
				[f,G_k]+\overline{G}_k&=\overline{G}_k+\Span{[v_1,f_3],[v_2,f_3]}\,,
			\end{align*}
			where $f_3=f_3^{i_3}(z_3,z_2)\partial_{z_3^{i_3}}$. Since by assumption $\Dim{[f,G_k]+\overline{G}_k}=\Dim{\overline{G}_k}+1$, the vector fields $[v_1,f_3]$ and $[v_2,f_3]$ are collinear modulo $\overline{G}_k$. Without loss of generality, we can assume that $[v_1,f_3]\notin\overline{G}_k$, implying that there exists a function $\alpha$ such that $[v_2,f_3]=\alpha[v_1,f_3]\Mod\overline{G}_k$. This function only depends on $\bar{x}_2$ and $\bar{x}_3$ since the vector fields $v_1,v_2$ and $f_3$ only depend on $z_3$ and $z_2$, \ie $\alpha=\alpha(z_3,z_2)$. Define the distribution $H_k=G_{k-1}+\Span{v_2-\alpha v_1}$. It is immediate that this distribution is involutive and since $v_1$ and $v_2$ are independent, we have $G_{k-1}\underset{1}{\subset}H_k\underset{1}{\subset}G_k$.\footnote{It follows that the direction of the vector field $v=v_2-\alpha v_1$ is modulo $G_{k-1}$ uniquely determined by the conditions $v\in G_k$, $v\notin G_{k-1}$, $[v,f]\in\overline{G}_k\neq\TXU$ and thus, the distribution $H_k$ is unique in this case. Indeed, assume that there would exist another such vector field $w\in G_k$, $w\notin G_{k-1}$, which is modulo $G_{k-1}$ not collinear with $v$ and still satisfies $[w,f]\in\overline{G}_k$. Then, we would have $G_k=G_{k-1}+\Span{v,w}$ and in turn $[f,G_k]\subset\overline{G}_k$, which is in contradiction with $\Dim{[f,G_k]+\overline{G}_k}=\Dim{\overline{G}_k}+1$.}\\
			
			So in any of the two cases, \ie $\overline{G}_k=\TXU$ or $\overline{G}_k\neq\TXU$, we have shown that there exists an involutive distribution $H_k=G_{k-1}+\Span{v}$ with some vector field $v\in G_k$, $v\notin G_{k-1}$. What is left to do is to show that $H_k+[f,H_k]=\overline{G}_k$. By construction, we have $[f,H_k]\subset\overline{G}_k$, so we only have to show that $[f,v]\notin G_k$, which can be shown by contradiction. Assume that $[f,v]\in G_k$. Due to the Jacobi identity, for every vector field $w\in G_{k-1}$, we then have
			\begin{align*}
				\underbrace{[w,\overbrace{[f,v]}^{\in G_k}]}_{\in G_k}+[v,\overbrace{[w,f]}^{\in G_k}]+\underbrace{[f,\overbrace{[v,w]}^{\in H_k}]}_{\in G_k}&=0\,,
			\end{align*}
			(where we have used that $G_{k-1}\subset\C{G_k}$ and that $[f,v]\in G_k$ implies $[f,H_k]\subset G_k$ as well as the involutivity of $H_k$). However, this implies $[v,[w,f]]\in G_k$ for every $w\in G_{k-1}$, which in turn would imply that $G_k$ is involutive and contradict with $G_k$ being non-involutive.
		\subsection{Proof of Lemma \ref{lem:cauchy}}
			We have $D_{k_1}=D_{k_1-1}+\Span{v_1,v_2}$ and since $D_{k_1-1}\subset\C{D_{k_1}}$, we have $D_{k_1}^{(1)}=D_{k_1-1}+\Span{v_1,v_2,[v_1,v_2]}$. Based on the Jacobi identity, it can easily be shown that $D_{k_1-1}\subset\C{D_{k_1}^{(1)}}$. Since by assumption $\Dim{\overline{D}_{k_1}}=\Dim{D_{k_1}}+2$, the vector fields $[v_1,[v_1,v_2]]$ and $[v_2,[v_1,v_2]]$ are collinear modulo $D_{k_1}^{(1)}$. Without loss of generality, we can assume that $[v_1,[v_1,v_2]]\notin D_{k_1}^{(1)}$ and thus, $[v_2,[v_1,v_2]]=\alpha [v_1,[v_1,v_2]]\Mod D_{k_1}^{(1)}$. The vector field $v=v_2-\alpha v_1$ by construction meets $v\in D_{k_1}$, $v\notin D_{k_1-1}$ and $[v,[v_1,v_2]]=0\Mod D_{k_1}^{(1)}$, implying that $v\in\C{D_{k_1}^{(1)}}$. It follows that $\C{D_{k_1}^{(1)}}=D_{k_1-1}+\Span{v}$ (the existence of further Cauchy characteristic vector fields would contradict with $D_{k_1}^{(1)}$ being non-involutive) and thus $D_{k_1-1}\underset{1}{\subset}\C{D_{k_1}^{(1)}}\underset{1}{\subset}D_{k_1}$.\\
			
			That $\C{D_{k_1}^{(1)}}+[f,\C{D_{k_1}^{(1)}}]=D_{k_1}^{(1)}$ can be shown as follows. We have $D_{k_1}\underset{1}{\subset}D_{k_1}^{(1)}$ and by assumption $[f,\C{D_{k_1}^{(1)}}]\subset D_{k_1}^{(1)}$, and we just have shown that $\C{D_{k_1}^{(1)}}=D_{k_1-1}+\Span{v}$ with $v\in D_{k_1}$, $v\notin D_{k_1-1}$. Therefore, $\C{D_{k_1}^{(1)}}+[f,\C{D_{k_1}^{(1)}}]=D_{k_1}^{(1)}$ holds if and only if $[f,v]\notin D_{k_1}$, which can be shown by contradiction. Assume that $[f,v]\in D_{k_1}$. Due to the Jacobi identity, for every vector field $w\in D_{k_1-1}$, we then have
			\begin{align*}
				\begin{aligned}
					\underbrace{[w,\overbrace{[f,v]}^{\in D_{k_1}}]}_{\in D_{k_1}}+[v,\overbrace{[w,f]}^{\in D_{k_1}}]+\underbrace{[f,\overbrace{[v,w]}^{\in \C{D_{k_1}^{(1)}}}]}_{\in D_{k_1}}&=0\,,
				\end{aligned}
			\end{align*}
			(where we have used that $D_{k_1-1}\subset\C{D_{k_1}}$ and that $[f,v]\in D_{k_1}$ implies $[f,\C{D_{k_1}^{(1)}}]\subset D_{k_1}$ as well as the involutivity of $\C{D_{k_1}^{(1)}}$). However, this implies that $[v,[w,f]]\in D_{k_1}$ for every $w\in D_{k_1-1}$, which in turn would imply that $D_{k_1}$ is involutive and contradict with $D_{k_1}$ being non-involutive.
\end{document}